\def\Xint#1{\mathchoice
{\XXint\displaystyle\textstyle{#1}}%
{\XXint\textstyle\scriptstyle{#1}}%
{\XXint\scriptstyle\scriptscriptstyle{#1}}%
{\XXint\scriptscriptstyle\scriptscriptstyle{#1}}%
\!\int}
\def\XXint#1#2#3{{\setbox0=\hbox{$#1{#2#3}{\int}$ }
\vcenter{\hbox{$#2#3$ }}\kern-.6\wd0}}
\def\dashint{\Xint-}
\newcommand{\roma}{\mathrm{I}}
\newcommand{\romb}{\mathrm{II}}
\newcommand{\romc}{\mathrm{III}}
\newcommand{\etamaxmin}[1]{\eta_{\text{#1}}}
\newcommand{\si}{\mathrm{S}_i}
\newcommand{\ti}{\mathrm{T}_i}
\newcommand{\hi}{\mathrm{H}_i}
\newcommand{\mc}[1]{\mathcal{#1}}
\newcommand{\dx}{\mathrm{d}x}
\newcommand{\prnt}[1]{\left( #1 \right)}
\newcommand{\norm}[1]{\left\|#1\right\|}
\newcommand{\normHsemi}[2]{\left|#1\right|_{H^{1}\prnt{#2}}}
\newcommand{\normE}[2]{\norm{#1}_{H^{1}_{\kappa}\prnt{#2}}}
\newcommand{\seminormE}[2]{|#1|_{H^{1}_{\kappa}\prnt{#2}}}
\newcommand{\normL}[2]{\norm{#1}_{L^2\prnt{#2}}}
\newcommand{\normLi}[2]{\norm{#1}_{L^2_{{\kappa}^{-1}}\prnt{#2}}}
\newcommand{\normLii}[2]{\norm{#1}_{L^2_{\widetilde{\kappa}^{-1}}\prnt{#2}}}
\newcommand{\normLT}[2]{\norm{#1}_{L^2_{\widetilde{\kappa}}\prnt{#2}}}
\newcommand{\normHp}[3]{\norm{#1}_{H^{#3}\prnt{#2}}}
\newcommand{\innerE}[2]{\langle {#1},{#2}\rangle}
\newcommand{\locV}[2]{V^{\mathrm{H}_{#1}}_{\text{#2}}}
\newcommand{\Cov}{C_{\mathrm{ov}}}
\newcommand{\Cw}{{\rm C}_{\mathrm{weak}}}
\newcommand{\Cpoin}[1]{{\rm C}_{\mathrm{poin}}(#1)}
\newcommand{\locv}[3]{{#1}^{#2}_{\mathrm{#3}}}
\newtheorem{theorem}{Theorem}[section]
\newtheorem{assumption}{Assumption}[section]
\newtheorem{remark}{Remark}[section]
\newtheorem{lemma}{Lemma}[section]
\newtheorem{proposition}{Proposition}[section]
\numberwithin{equation}{section}
\title{On the Convergence Rates of GMsFEMs for Heterogeneous Elliptic Problems without Oversampling Techniques}
\author{Guanglian Li\thanks{Department of Mathematics, Imperial College London, London SW7 2AZ,
UK. The work was partially carried out when the author was affiliated with Institut f\"ur Numerische
Simulation and Hausdorff Center for Mathematics, Universit\"at Bonn,
            Wegelerstra{\ss}e 6, D-53115 Bonn, Germany.
             (\texttt{lotusli0707@gmail.com}, \texttt{guanglian.li@imperial.ac.uk}).
             }
}
\begin{document}
\maketitle
\begin{abstract}
This work is concerned with the rigorous analysis on the Generalized Multiscale Finite Element Methods (GMsFEMs) for elliptic problems with high-contrast heterogeneous coefficients. GMsFEMs are popular numerical methods for solving flow problems with heterogeneous high-contrast coefficients, and it has demonstrated extremely promising numerical results for a wide range of applications. However, the mathematical justification of the efficiency of the method is still largely missing.

In this work, we analyze two types of multiscale basis functions, i.e., local spectral basis functions and basis functions of local harmonic extension type, within the GMsFEM framework. These constructions have found many applications in the past few years. We establish their optimal convergence in the energy norm under a very mild assumption that the source term belongs to some weighted $L^2$ space, and without the help of any oversampling technique. Furthermore, we analyze the model order reduction of the local harmonic extension basis and prove its convergence in the energy norm. These theoretical findings shed insights into the mechanism behind the efficiency of the GMsFEMs.

\noindent{\bf Keywords:}
multiscale methods, heterogeneous coefficient, high-contrast, elliptic problems, spectral basis function, harmonic extension basis functions, GMsFEM, proper orthogonal decomposition
\end{abstract}

\section{Introduction}
The accurate mathematical modeling of many important applications, e.g., composite materials, porous media and reservoir
simulation, calls for elliptic problems with heterogeneous coefficients. In order to adequately describe the intrinsic complex properties in
practical scenarios, the heterogeneous coefficients can have
both multiple inseparable scales and high-contrast. Due to the disparity of scales, the classical numerical treatment becomes prohibitively expensive
and even intractable for many multiscale applications. Nonetheless, motivated by the broad spectrum of practical applications, a large number of multiscale model reduction techniques, e.g., multiscale finite element methods (MsFEMs),
heterogeneous multiscale methods (HMMs), variational multiscale methods, flux norm approach, generalized multiscale
finite element methods (GMsFEMs) and localized orthogonal decomposition (LOD), have been proposed in the literature
\cite{MR1455261,MR1979846,MR1660141,MR2721592, egh12, MR3246801, li2017error} over
the last few decades. They have achieved
great success in the efficient and accurate simulation of heterogeneous problems. Amongst these numerical methods, the GMsFEM \cite{egh12} has
demonstrated extremely promising numerical results for a wide variety of problems, and thus it is becoming
increasingly popular. However, the mathematical understanding of the method remains largely missing, despite numerous
successful empirical evidences. The goal of this work is to provide a mathematical justification, by rigorously
establishing the optimal convergence of the GMsFEMs in the energy norm without any restrictive assumptions or oversampling technique.

We first formulate the heterogeneous elliptic problem. Let $D\subset
\mathbb{R}^d$ ($d=1,2,3$) be an open bounded Lipschitz domain {with a boundary $\partial D$}. Then we seek a function $u\in V:=H^{1}_{0}(D)$ such that
\begin{equation}\label{eqn:pde}
\begin{aligned}
\mathcal{L}u:=-\nabla\cdot(\kappa\nabla u)&=f &&\quad\text{ in }D,\\
u&=0 &&\quad\text{ on } \partial D,
\end{aligned}
\end{equation}
where the force term $f\in L^2(D)$ and the permeability coefficient $\kappa\in L^{\infty}(D)$ with $\alpha\leq\kappa(x)
\leq\beta$ almost everywhere for some lower bound $\alpha>0$ and upper bound $\beta>\alpha$. We denote by $\Lambda:=
\frac{\beta}{\alpha}$ the ratio of these bounds, {which reflects the contrast of the coefficient $\kappa$}. Note that
the existence of multiple scales in the coefficient $\kappa$ rends directly solving Problem \eqref{eqn:pde} challenging, since
resolving the problem to the finest scale would incur huge computational cost.

The goal of the GMsFEM is to efficiently capture the large-scale behavior of the solution $u$ locally without
resolving all the microscale features within. To realize this desirable property, we first discretize the computational
domain $D$ into a coarse mesh $\mathcal{T}^H$. Over $\mathcal{T}^H$, we define the classical multiscale
basis functions $\{\chi_i\}_{i=1}^{N}$, with $N$ being the total number of coarse nodes. Let $\omega_i:=\text{supp}
(\chi_i)$ be the support of $\chi_i$, which is often called a local coarse neighborhood below. To
accurately approximate the local solution $u|_{\omega_i}$ (restricted to $\omega_i$), we construct
a local approximation space. In practice, two types of local multiscale spaces are frequently employed:
local spectral space ($V_{\text{off}}^{\si, \ell_i^{\roma}}$, of dimension $\ell_i^{\roma}$) and local harmonic space
$V_{\text{snap}}^{\hi}$. The dimensionality of the local harmonic space $V_{\text{snap}}^{\hi}$ is problem-dependent, and it can be
extremely large when the microscale within the coefficient $\kappa$ tends to zero. Hence, a further local model reduction based
on proper orthogonal decomposition (POD) in $V_{\text{snap}}^{\hi}$ is often employed. We denote the corresponding
local POD space of rank $\ell_i$ by $V_{\text{off}}^{\hi, \ell_i}$. In sum, in practice, we can have three types of local
multiscale spaces at our disposal: $V_{\text{off}}^{\si, \ell_i}$, $V_{\text{snap}}^{\hi}$ and $V_{\text{off}}^{\hi, \ell_i}$ on
$\omega_i$. These basis functions are then used in the standard finite element framework, e.g., continuous
Galerkin formulation, for constructing
a global approximate solution.

One crucial part in the local spectral basis construction is to include local spectral basis functions ($V_{\text{off}}^{\ti, \ell_i^{\romb}}$, of dimension $\ell_i^{\romb}$) governed by Steklov eigenvalue problems \cite{MR2770439}, which was first applied to the context of the GMsFEMs in \cite{MR3277208}, to the best of our knowledge.
This was motivated by the decomposition of the local solution $u|_{\omega_i}$ into the sum of three components, cf. \eqref{eq:decomp}, where the first two components can be approximated efficiently by the local spectral space $V_{\text{off}}^{\si, \ell_i^{\roma}}$ and $V_{\text{off}}^{\ti, \ell_i^{\romb}}$, respectively, and the third component is of rank one and can be obtained by solving one local problem.

The good approximation property of these local multiscale spaces to the solution $u|_{\omega_i}$ of problem
\eqref{eqn:pde} is critical to ensure the accuracy and efficiency of the GMsFEM. We shall present relevant
approximation error results for the preceding three types of multiscale basis functions in Proposition \ref{prop:projection}, Lemma \ref{lemma:u2}, Lemma \ref{lem:energyHA} and Lemma
\ref{lem:5.2}. It is worth pointing out that the proof of Proposition \ref{prop:projection} relies crucially on the expansion of the
source term $f$ in terms of the local spectral basis function in Lemma \ref{lem:assF}. Thus the argument differs
substantially from the typical argument for such analysis that employs the oversampling argument together with a Cacciopoli type
inequality \cite{babuska2011optimal,eglp13}, and it is of independent interest by itself.

The proof to Lemma \ref{lemma:u2} is very critical. It relies essentially on the transposition method \cite{MR0350177},
which bounds the weighted $L^2$ error estimate in the domain by the boundary error estimate, since the latter can be
obtained straightforwardly. Most importantly, the involved constant is independent of the contrast in the
coefficient $\kappa$. This result is presented in Theorem \ref{lem:very-weak}.

To establish Lemmas \ref{lem:energyHA} and \ref{lem:5.2}, we make one mild assumption on the
geometry of the coefficient, cf. Assumption \ref{ass:coeff}, which enables the use
of the weighted Friedrichs inequality in the  proof. 
In addition, since the local multiscale basis functions in  $V_{\text{off}}^{\hi, \ell_i}$ are $\kappa$-harmonic
and since the weighted $L^2(\omega_i)$ error estimate can be obtained directly from the POD, cf. Lemma \ref{lem:5.1}, we employ a
Cacciopoli type inequality \cite{MR717034} to prove Lemma \ref{lem:5.2}. Note that our analysis does not
exploit the oversampling strategy, which has played a crucial role for proving energy error estimates
in all existing works \cite{babuska2011optimal,eglp13,MR3246801,chung2017constraint}.

Together with the conforming Galerkin formulation and the partition of unity functions $\{\chi_i\}_{i=1}^N$
on the local domains $\{\omega_i\}_{i=1}^{N}$, we obtain three types of multiscale methods to solve
problem \eqref{eqn:pde}, cf. \eqref{cgvarform_spectral}--\eqref{cgvarform_pod}. Their energy error estimates
are presented in Propositions \ref{prop:Finalspectral}, \ref{prop:FinalSnap} and \ref{prop:Finalpod},
respectively. Specifically, their convergence rates are precisely characterized by the eigenvalues $\lambda_{\ell_i^{\roma}}^{\si}$, $\lambda_{\ell_i^{\romb}}^{\ti}$,
$\lambda_{\ell_i}^{\hi}$ and the coarse mesh size $H$ (see Section \ref{sec:error} for the definitions
of the eigenvalue problems). Thus, the decay/growth behavior of these eigenvalues plays an extremely
important role in determining the convergence rates, which, however, is beyond the scope of the present work. We refer
 readers to the works \cite{babuska2011optimal,li2017low} for results along this line.

Last, we put our contributions into the context. The local spectral estimates in the energy norm in
Proposition \ref{prop:projection} and Lemma \ref{lemma:u2} represent the state-of-art result in the sense that no restrictive
assumption on the problem data is made. Furthermore, we prove the convergence without the help
of the oversampling strategy in the analysis, which has played a crucial role in all existing studies
\cite{babuska2011optimal,EFENDIEV2011937,eglp13,chung2017constraint}. In practice, avoiding oversampling
strategy allows saving computational cost, and this also corroborates well empirical observations \cite{EFENDIEV2011937}.
Due to the local estimates in Proposition \ref{prop:projection} and Lemma \ref{lemma:u2},
we are able to derive a global estimate in Proposition \ref{prop:Finalspectral} that is the much needed
results for analyzing many multiscale methods \cite{MR1660141, MR2721592, MR3246801,li2017error}, cf. Remark
\ref{rem:spectral}. Recently Chung et al \cite{chung2017constraint} proved some convergence estimates
in a similar spirit to Proposition \ref{prop:projection}, by adapting the LOD technique \cite{MR3246801}.
Our result greatly simplifies the analysis and improves their result \cite{chung2017constraint} by avoiding
the oversampling. To the best of our knowledge, there is no known convergence estimate for
either the local harmonic space or the local POD space, and the results presented in Propositions
\ref{prop:FinalSnap} and \ref{prop:Finalpod} are the first such results.

The remainder of this paper is organized as follows. We formulate the heterogeneous problem in Section \ref{sec:pre},
and describe the main idea of the GMsFEM. We present in Section \ref{cgdgmsfem} the
construction of local multiscale spaces, harmonic extension space and discrete POD. Based upon them, we present three type of
global multiscale spaces. Together with the canonical conforming Galerkin formulation, we
obtain three type of numerical methods to approximate Problem \eqref{eqn:pde} in \eqref{cgvarform_spectral}
to \eqref{cgvarform_pod}. The error estimates of these multiscale methods are presented in Section \ref{sec:error},
which represent the main contributions of this paper. Finally, we conclude the paper with concluding remarks in
Section \ref{sec:conclusion}. We establish the regularity result of the elliptic problem with very rough boundary data in an appendix.

\section{Preliminary}\label{sec:pre}
Now we present basic facts related to Problem \eqref{eqn:pde} and briefly describe the GMsFEM (and also to fix the notation).
Let the space $V:=H^{1}_{0}(D)$ be equipped with the (weighted) inner product
\begin{align*}
\innerE{v_1}{v_2}_{D}=:a(v_1,v_2):=\int_{D}\kappa\nabla v_1\cdot\nabla v_2\;\dx\quad \text{ for all } v_1, v_2\in V,
\end{align*}
and the associated energy norm
\begin{align*}
\seminormE{v}{D}^2:=\innerE{v}{v}_{D}\quad \text{ for all } v\in V.
\end{align*}
We denote by $W:=L^2(D)$ equipped with the usual norm $\normL{\cdot}{D}$ {and inner product $(\cdot,\cdot)_{D}$}.

The weak formulation for problem \eqref{eqn:pde} is to find $u\in V$ such that
\begin{align}\label{eqn:weakform}
a(u,v)=(f,v)_{D} \quad \text{for all
} v\in V.
\end{align}
The Lax-Milgram theorem implies the well-posedness of problem \eqref{eqn:weakform}.

To discretize problem \eqref{eqn:pde}, we first introduce fine and coarse grids.
Let $\mathcal{T}^H$ be a regular partition of the domain $D$ into
finite elements (triangles, quadrilaterals, tetrahedra, etc.) with a mesh size $H$. We refer to
this partition as coarse grids, and accordingly the course elements. Then each coarse element is further partitioned
into a union of connected fine grid blocks. The fine-grid partition is denoted by
$\mathcal{T}^h$ with $h$ being its mesh size. Over $\mathcal{T}^h$, let $V_h$ be the conforming piecewise
linear finite element space:
\[
V_h:=\{v\in \mathcal{C}: V|_{T}\in \mathcal{P}_{1} \text{ for all } T\in \mathcal{T}^h\},
\]
where $\mathcal{P}_1$ denotes the space of linear polynomials. Then the fine-scale solution $u_h\in V_h$ satisfies
\begin{align}\label{eqn:weakform_h}
a(u_h,v_h)=(f,v_h)_{D} \quad \text{ for all } v_h\in V_h.
\end{align}
The Galerkin orthogonality implies the following optimal estimate in the energy norm:
\begin{align}\label{eq:fineApriori}
\seminormE{u-u_h}{D}\leq \min\limits_{v_h\in V_h}\seminormE{u-v_h}{D}.
\end{align}
The fine-scale solution $u_h$ will serve as a reference solution in multiscale methods. Note that due to the presence of multiple scales in the coefficient $\kappa$, the fine-scale mesh size $h$ should be commensurate with the smallest scale and thus it can be very small in order to obtain an accurate solution. This necessarily involves huge computational complexity, and more efficient methods are in great demand.


In this work, we are concerned with flow problems with high-contrast heterogeneous coefficients,
which involve multiscale permeability fields, e.g., permeability fields with vugs and faults, and
furthermore, can be parameter-dependent, e.g., viscosity. Under such scenario, the computation of the
fine-scale solution $u_h$ is vulnerable to high computational
complexity, and one has to resort to multiscale methods. The GMsFEM has been extremely successful
for solving multiscale flow problems, which we briefly recap below. 

The GMsFEM aims at solving Problem \eqref{eqn:pde} on the coarse mesh $\mathcal{T}^{H}$
cheaply, which, meanwhile, maintains a certain accuracy compared to the fine-scale solution $u_h$. To describe the
GMsFEM, we need a few notation. The vertices of $\mathcal{T}^H$
are denoted by $\{O_i\}_{i=1}^{N}$, with $N$ being the total number of coarse nodes.
The coarse neighborhood associated with the node $O_i$ is denoted by
\begin{equation} \label{neighborhood}
\omega_i:=\bigcup\{ K_j\in\mathcal{T}^H: ~~~ O_i\in \overline{K}_j\}.
\end{equation}
The overlap constant $\Cov$ is defined by
\begin{align}\label{eq:overlap}
\Cov:=\max\limits_{K\in \mathcal{T}^{H}}\#\{O_i: K\subset\omega_i \text{ for } i=1,2,\cdots,N\}.
\end{align}
We refer to Figure~\ref{schematic} for an illustration of neighborhoods and elements subordinated to the coarse
discretization $\mathcal{T}^H$. Throughout, we use $\omega_i$ to denote a coarse neighborhood.

\begin{figure}[htb]
  \centering
  \includegraphics[width=0.65\textwidth]{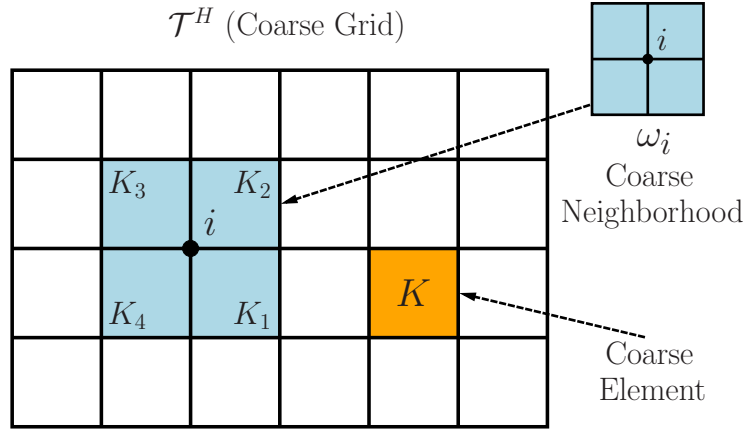}
  \caption{Illustration of a coarse neighborhood and coarse element with an overlapping constant $\Cov=4$.}
  \label{schematic}
\end{figure}

Next, we outline the GMsFEM with a continuous Galerkin (CG) formulation; see Section \ref{cgdgmsfem} for details. We denote by $\omega_i$
the support of the multiscale basis functions. These basis functions are denoted by $\psi_k^{\omega_i}$ for
$k=1,\cdots,\ell_i$ for some $\ell_i\in \mathbb{N}_{+}$, which is the number of local basis functions associated with $\omega_i$. Throughout,
the superscript $i$ denotes the $i$-th coarse node or coarse neighborhood $\omega_i$.
Generally, the GMsFEM utilizes multiple basis functions per coarse neighborhood $\omega_i$,
and the index $k$ represents the numbering of these basis functions.
In turn, the CG multiscale solution $u_{\text{ms}}$ is sought as $u_{\text{ms}}(x)=\sum_{i,k} c_{k}^i \psi_{k}^{\omega_i}(x)$.
Once the basis functions $\psi_k^{\omega_i}$ are identified, the CG global coupling is given through the variational form
\begin{equation}
\label{eq:globalG} a(u_{\text{ms}},v)=(f,v), \quad \text{for all} \, \, v\in
V_{\text{off}},
\end{equation}
where $V_{\text{off}}$ denotes the finite element space spanned by these basis functions.

We conclude the section with the following assumption on $\Omega$ and $\kappa$.
\begin{assumption}[Structure of $D$ and $\kappa$]\label{ass:coeff}
Let $D$ be a domain with a $C^{1,\alpha}$ $(0<\alpha<1)$ boundary $\partial D$,
and $\{D_i\}_{i=1}^m\subset D$ be $m$ pairwise disjoint strictly convex open subsets, {each with a $C^{1,\alpha}$ boundary
$\Gamma_i:=\partial D_i$,} and denote $D_0=D\backslash \overline{\cup_{i=1}^{m} D_i}$. 
Let the permeability coefficient $\kappa$ be piecewise regular function defined by
\begin{equation}
\kappa=\left\{
\begin{aligned}
&\eta_{i}(x) &\text{ in } D_{i},\\
&1 &\text{ in }D_0.
\end{aligned}
\right.
\end{equation}
Here $\eta_i\in C^{\mu}(\bar{D_i})$ with $\mu\in (0,1)$ for $i=1,\cdots,m$. Denote  $\etamaxmin{min}:=\min_{i}\{\eta_i\}\geq 1$ and $\etamaxmin{max}:=\max_{i}\{\eta_i\}$.
\end{assumption}

Under Assumption \ref{ass:coeff}, the coefficient $\kappa$ is $\Gamma$-{\em quasi-monotone} on each coarse neighborhood $\omega_i$ and the global domain $D$
(see \cite[Definition 2.6]{pechstein2012weighted} for the precise definition) with either $\Gamma:=\partial \omega_i$
or $\Gamma:=\partial D$. Then the following weighted Friedrichs inequality \cite[Theorem 2.7]{pechstein2012weighted} holds.
\begin{theorem}[Weighted Friedrichs inequality]\label{thm:friedrichs}
Let $\text{diam}(D)$ be the diameter of the bounded domain $D$ and $\omega_i\subset D$. Define
\begin{align}
\Cpoin{\omega_i}&:=H^{-2}\max\limits_{w\in H^1_0(\omega_i)}\frac{\int_{\omega_i}{\kappa}w^2\dx}{\int_{\omega_i}\kappa|\nabla w|^2\dx},\label{eq:poinConstant}\\
\Cpoin{D}&:=\text{diam}(D)^{-2}\max\limits_{w\in H^1_0(D)}\frac{\int_{D}{\kappa}w^2\dx}{\int_{D}\kappa|\nabla w|^2\dx}.\label{eq:poinConstantG}
\end{align}
Then the positive constants $\Cpoin{\omega_i}$ and $\Cpoin{D}$ are independent of the contrast of $\kappa$.
\end{theorem}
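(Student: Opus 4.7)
The plan is to invoke the Pechstein--Scheichl framework for contrast-independent weighted Poincar\'{e}/Friedrichs inequalities and simply verify that its hypothesis (quasi-monotonicity of $\kappa$) holds in our setting. First I would check the quasi-monotonicity. Under Assumption \ref{ass:coeff}, $\kappa$ equals $1$ on the connected background $D_0$ and $\eta_i\ge 1$ on each inclusion $D_i$; moreover the inclusions are compactly contained in $D$, so $\partial D\subset\overline{D_0}$. Hence every inclusion can be joined to $\partial D$ by a path lying in $D_0$ along which $\kappa$ is non-increasing, which is precisely the $\Gamma$-quasi-monotonicity condition for $\Gamma=\partial D$. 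The same argument applies on a coarse neighborhood $\omega_i$ after observing that the parts of the inclusions meeting $\omega_i$ can again be reached from $\partial\omega_i$ through the background component of $\omega_i$.

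Given quasi-monotonicity, the strategy for the actual inequality is the classical decomposition
\[
w \;=\; \sum_{i=1}^m \bar w_i\,\chi_{D_i} \;+\; \Bigl(w-\sum_{i=1}^m \bar w_i\,\chi_{D_i}\Bigr),\qquad \bar w_i:=|D_i|^{-1}\!\int_{D_i} w\,\dx.
\]
The fluctuation part is controlled on each strictly convex $D_i$ by the standard (unweighted) Poincar\'{e} inequality, whose constant depends only on the shape of $D_i$; since $\kappa\equiv\eta_i$ on $D_i$, the factor $\eta_i$ appears on both sides and cancels, producing a contrast-free bound. For the piecewise-constant mean part, I would use the quasi-monotone chain: following a tube $T_i\subset D_0$ from $\partial D_i$ to $\partial D$ (or $\partial\omega_i$), a trace/Poincar\'{e} estimate on the constant-coefficient region $D_0$ yields $|\bar w_i|^2\lesssim H^2\int_{D_0\cup D_i}|\nabla w|^2$. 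Multiplying by $\eta_i|D_i|$, using $\eta_i\ge 1=\kappa|_{D_0}$, and summing in $i$ with the help of $\Cov$ produces the required inequality with a constant depending only on the geometry of $D$ and of the $D_i$.

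The main obstacle is ensuring that the geometric constants in the chain argument are genuinely contrast-independent and do not blow up with the number or shape of the inclusions. This is where the hypotheses in Assumption \ref{ass:coeff} are essential: the $C^{1,\alpha}$ regularity and strict convexity of each $\partial D_i$ provide uniform interior cone/ball conditions, which in turn make the trace constants on each inclusion uniform; together with the fact that $D_0$ is Lipschitz (as a bounded set minus finitely many $C^{1,\alpha}$ strictly convex holes), this yields uniform Poincar\'{e} and trace inequalities on $D_0$. Once these geometric ingredients are in place, the proof reduces to the abstract weighted Poincar\'{e} theorem \cite[Thm.~2.7]{pechstein2012weighted}, and no dependence on $\eta_{\min}$ or $\eta_{\max}$ enters the final constants $\Cpoin{\omega_i}$ and $\Cpoin{D}$.
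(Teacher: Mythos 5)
Your high-level route --- check $\Gamma$-quasi-monotonicity of $\kappa$ and then invoke \cite[Theorem 2.7]{pechstein2012weighted} --- is exactly the paper's: the text preceding the theorem asserts quasi-monotonicity under Assumption \ref{ass:coeff} and offers no proof beyond that citation. The gap is in your verification of the hypothesis, and it is not cosmetic. Type-$\Gamma$ quasi-monotonicity requires that from each subregion there be a path to $\Gamma$ along which the coefficient is non-\emph{de}creasing, i.e.\ every region traversed on the way to $\Gamma$ must carry a coefficient at least as large as that of the starting region; you have stated the condition with the inequality reversed. Under Assumption \ref{ass:coeff} the inclusions satisfy $\eta_i\ge 1$ while $\kappa\equiv 1$ on the background $D_0$, so the path from an interior inclusion $D_i$ to $\partial D$ (or to $\partial\omega_i$) passes through a region of \emph{strictly smaller} coefficient whenever $\eta_i>1$.

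You can see the failure concretely in your own chain estimate: from $|\bar w_i|^2\lesssim H^2\int_{D_0\cup D_i}|\nabla w|^2\,\dx$ you must multiply by $\eta_i|D_i|$ to recover the weighted left-hand side, which produces a term $\eta_i\int_{D_0}|\nabla w|^2\,\dx=\eta_i\int_{D_0}\kappa|\nabla w|^2\,\dx$; the inequality $\eta_i\ge 1=\kappa|_{D_0}$ points the wrong way to absorb the prefactor, and the resulting constant grows linearly in $\eta_i$. This is genuine rather than an artifact of the method: taking $w\in H^1_0(D)$ with $w\equiv 1$ on a compactly contained inclusion $D_1$ and $\nabla w$ supported in $D_0$ gives $\int_D\kappa w^2\,\dx\ge \eta_1|D_1|$ while $\int_D\kappa|\nabla w|^2\,\dx$ remains bounded independently of $\eta_1$, so the Rayleigh quotient in \eqref{eq:poinConstantG} is at least of order $\eta_1$. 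The regime actually covered by the quasi-monotone chain mechanism is the opposite one, where the coefficient does not decrease along paths toward $\Gamma$ (e.g.\ low-coefficient inclusions in a unit background, or inclusions meeting the boundary). As written, your verification of the hypothesis of \cite[Theorem 2.7]{pechstein2012weighted} does not go through, and the decomposition argument you sketch cannot deliver a contrast-independent constant in this setting.
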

\begin{remark}
Below we only require that the constants $\Cpoin{\omega_i}$ and $\Cpoin{D}$ be independent
of the contrast in $\kappa$. Assumption \ref{ass:coeff} is one sufficient condition to ensure this,
and it can be relaxed \cite{pechstein2012weighted}.
\end{remark}



\section{CG-based GMsFEM for high-contrast flow problems}
\label{cgdgmsfem}
In this section, we present the local spectral basis functions, local harmonic extension
basis functions and POD, and the global weak formulation based on these local multiscale basis functions.

\subsection{Local multiscale basis functions}
\label{locbasis}
First we present two principled approaches for constructing local multiscale functions: local spectral
bases and local harmonic extension bases, which represent the two main approaches within the GMsFEM framework.
The constructions are carried out on each coarse neighborhood $\omega_i$ with $i=1,2,\cdots,N$, and can be carried out in
parallel, if desired. Since the dimensionality of the local harmonic extension bases is problem-dependent and inversely proportional
to the smallest scale in $\kappa$, in practice,  we often perform an ``optimal'' local model order reduction based
on POD to further reduce the complexity at the online stage.

Before presenting the constructions, we first introduce some useful function spaces, which will play an important role in the analysis below.
Let $L^2_{\widetilde{\kappa}}(\omega_i)$ and $H^1_{\kappa}(\omega_i)$ be Hilbert spaces
with their inner products and norms defined respectively by
\begin{alignat*}{3}
(w_1,w_2)_{i}&:=\int_{\omega_i}\widetilde{\kappa}w_1\cdot w_2\;\dx&&\|{w_1}\|_{L^2_{\widetilde{\kappa}}(\omega_i)}^2:=(w_1,w_1)_{i}&\ \ \text{ for }w_1, w_2\in L^2_{\widetilde{\kappa}}(\omega_i),\\
\innerE{v_1}{v_2}_{i}&:=\int_{\omega_i}{\kappa}\nabla v_1\cdot \nabla v_2\;\dx \quad&&\normE{v_1}{\omega_i}^2:=(v_1,v_2)_i+\innerE{v_1}{v_1}_{i}&\text{ for } v_1,v_2\in H^1_{\kappa}(\omega_i).
\end{alignat*}
Next we define two subspaces $W_i\subset L^2_{ \widetilde{\kappa}}(\omega_i)$ and $V_i\subset H^1_{\kappa}(\omega_i)$ of codimension one by
\[
W_i:=\{v\in L^2_{ \widetilde{\kappa}}(\omega_i):\int_{\omega_i}\widetilde{\kappa}v\;\dx=0\}
\quad \mbox{and}\quad
V_i:=\{v\in H^1_{\kappa}(\omega_i):\int_{\omega_i}\widetilde{\kappa}v\;\dx=0\}.
\]
Furthermore, we introduce the following weighted Sobolev spaces:
\begin{align*}
L_{{\kappa}^{-1}}^{2}(\omega_i):=&\Big\{w:\|w\|_{L^2_{\kappa^{-1}(\omega_i)}}^2:=\int_{\omega_i}{\kappa}^{-1} w^2\dx<\infty  \Big\},\\
H_{\kappa,0}^{1}(\omega_i):=&\Big\{w: w|_{\partial{\omega_i}}=0\text{ s.t. }\seminormE{w}{\omega_i}^2:=\int_{\omega_i}\kappa |\nabla w|^2\dx<\infty  \Big\}.
\end{align*}
Similarly, we define the following weighted Sobolev spaces with their associated norms: $(L_{\widetilde{\kappa}^{-1}}^{2}(\omega_i),\|\cdot\|_{L^2_{\widetilde{\kappa}^{-1}(\omega_i)}})$, $(L_{{\kappa}^{-1}}^{2}(D),\|\cdot\|_{L^2_{\kappa^{-1}(D)}})$ and $(L_{\widetilde{\kappa}^{-1}}^{2}(D),\|\cdot\|_{L^2_{\widetilde{\kappa}^{-1}(D)}})$. The nonnegative weights $\widetilde{\kappa}$ and $\widetilde{\kappa}^{-1}$ will be defined in \eqref{defn:tildeKappa} and \eqref{eq:inv-tildeKappa} below, respectively.

Throughout, the superscripts $\si$, $\ti$ and $\hi$ are associated to the local
spectral spaces and local harmonic space on $\omega_i$, respectively. Below we
describe the construction of local multiscale basis functions on $\omega_i$.

\subsubsection*{Local spectral bases I}

To define the local spectral bases on $\omega_i$, we first introduce a local elliptic operator $\mathcal{L}_i$ on $\omega_i$ by
\begin{align}\label{eq:Li}
  \left\{ \begin{aligned}
          \mathcal{L}_i v&:=-\nabla\cdot(\kappa\nabla v)\quad \mbox{in }\omega_i,\\
          \kappa\frac{\partial v}{\partial n}&=0\quad \mbox{on }\partial\omega_i.
  \end{aligned}\right.
\end{align}
The Lax-Milgram theorem implies the well-posedness of the operator $\mathcal{L}_i:V_i\to V_i^*$,
the dual space $V_i^{*}$ of $V_i$. 
Then the spectral problem can be formulated in terms of
$\mathcal{L}_i$, i.e., to seek $(\lambda_{j}^{\si}, v_{j}^{\si})\in \mathbb{R}\times V_i$ such that
\begin{alignat}{2}\label{eq:spectral}
\mathcal{L}_i v_{j}^{\si}  &= \widetilde{\kappa}\lambda_{j}^{\si} v_{j}^{\si}
\quad &&\text{in} \, \, \, \omega_i,\\
\kappa\frac{\partial}{\partial n}v_{j}^{\si}&=0&&\text{ on } \partial \omega_i,\nonumber
\end{alignat}
where the parameter $\widetilde\kappa$ is defined by
\begin{equation}\label{defn:tildeKappa}
\widetilde{\kappa} =H^2 \kappa \sum_{i=1}^{N}  | \nabla \chi_i |^2,
\end{equation}
with the multiscale function $\chi_i$ to be defined in \eqref{pou} below. Note that the use of $\widetilde{\kappa}$ in the local spectral problem \eqref{eq:spectral} instead of $\kappa$ is due to numerical consideration \cite{EFENDIEV2011937}. Furthermore, let $\widetilde{\kappa}^{-1}$ be defined by
\begin{equation}\label{eq:inv-tildeKappa}
\widetilde{\kappa}^{-1}(x)=
\left\{
\begin{aligned}
&\widetilde{\kappa}^{-1}, \quad &&\text{ when } \widetilde{\kappa}(x)\ne 0\\
&1, \quad &&\text{ otherwise }.
\end{aligned}
\right.
\end{equation}

\begin{remark}
Generally, one cannot preclude the existence of critical points from
the multiscale basis functions $\chi_i$
\cite{MR1289138,alberti2017critical}. In the two-dimensional case, it was proved
that there are at most a finite number of isolated critical points.
To simplify our presentation, we will assume $|D\cap\{\widetilde{\kappa}=0\}|=0$.
\end{remark}

The next result gives the eigenvalue behavior of the local spectral problem \eqref{eq:spectral}.
\begin{theorem}\label{lem:eigenvalue-blowup}
Let $\{(\lambda_j^{\si},v_j^{\si})\}_{j=1}^{\infty}$ be the eigenvalues and the corresponding normalized eigenfunctions in $W_i$ to the spectral problem \eqref{eq:spectral} listed according to their algebraic
multiplicities and the eigenvalues are ordered nondecreasingly. There holds
\begin{align}\label{eq:spectral_eigenvalue}
\lambda_j^{\si}\to \infty\quad \text{ as } j\to \infty.
\end{align}
\end{theorem}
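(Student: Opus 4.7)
My plan is to recast \eqref{eq:spectral} as an eigenvalue problem for a compact self-adjoint solution operator on $W_i$, and then invoke the Hilbert--Schmidt spectral theorem so that the reciprocals of its eigenvalues diverge. First I would write the weak form: find $(\lambda_j^{\si},v_j^{\si})\in \mathbb{R}\times V_i$ such that
\begin{align*}
\innerE{v_j^{\si}}{w}_i=\lambda_j^{\si}(v_j^{\si},w)_i\qquad \text{for all }w\in V_i.
\end{align*}
The imposed constraint $\int_{\omega_i}\widetilde{\kappa}v\,\dx=0$ is exactly the orthogonality to constants (which are in the kernel of the differential operator), so $V_i$ is the natural space where the problem is non-degenerate. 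On $V_i$, the weighted Friedrichs-type inequality (Theorem \ref{thm:friedrichs}, applied in its Neumann/zero-mean variant) combined with $\alpha\le\kappa\le\beta$ gives that $\innerE{\cdot}{\cdot}_i$ is coercive with the full $H^1_\kappa$-norm equivalent to the seminorm $\seminormE{\cdot}{\omega_i}$ on $V_i$.

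Next I would define the solution operator $T:W_i\to W_i$ by $Tf=u$, where $u\in V_i$ is the unique element satisfying $\innerE{u}{w}_i=(f,w)_i$ for every $w\in V_i$; existence and uniqueness follow from Lax--Milgram using the coercivity just noted, and $(f,w)_i$ is a bounded functional on $V_i$ because $\|w\|_{L^2_{\widetilde{\kappa}}(\omega_i)}$ is controlled by $\normE{w}{\omega_i}$ (as $\widetilde{\kappa}\in L^\infty(\omega_i)$ thanks to $\kappa\in L^\infty$ and the partition of unity $\chi_i$ having bounded gradient, together with the assumption $|D\cap\{\widetilde{\kappa}=0\}|=0$). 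Self-adjointness of $T$ on $W_i$ is immediate from the symmetry of $\innerE{\cdot}{\cdot}_i$, and $\lambda_j^{\si}$ are precisely $1/\mu_j$ where $\mu_j$ are the nonzero eigenvalues of $T$.

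The main obstacle is compactness of $T$. I would factor $T$ through the natural embedding $V_i\hookrightarrow W_i$: since $\kappa$ is bounded above and below by positive constants, $V_i$ sits as a closed subspace of $H^1(\omega_i)$ with equivalent norm, and Rellich--Kondrachov gives a compact embedding $V_i\hookrightarrow L^2(\omega_i)$. Because $\widetilde{\kappa}\in L^\infty(\omega_i)$, the identity $L^2(\omega_i)\hookrightarrow W_i=L^2_{\widetilde{\kappa}}(\omega_i)$ is continuous, so the composite embedding $V_i\hookrightarrow W_i$ is compact, hence $T$ is compact from $W_i$ to $W_i$. The one subtle point is making sure the zero-mean constraint is preserved: if $f\in W_i$, then testing the variational equation with the constant $1\in V_i^{\perp}$ (after accounting for the constraint) shows $\int_{\omega_i}\widetilde{\kappa}\,Tf\,\dx=0$ automatically, so $T$ does map $W_i$ into itself.

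Finally, the Hilbert--Schmidt theorem for compact self-adjoint operators yields a countable sequence of real eigenvalues $\{\mu_j\}$ of $T$ with $\mu_j\to 0$, and a corresponding orthonormal eigenbasis of $W_i$. Since $T$ is positive (coercivity implies $(Tf,f)_i=\innerE{Tf}{Tf}_i\geq 0$, with equality only when $f=0$), all $\mu_j>0$, and the eigenvalues of the original problem $\lambda_j^{\si}=1/\mu_j$ satisfy $\lambda_j^{\si}\to\infty$, which is \eqref{eq:spectral_eigenvalue}.
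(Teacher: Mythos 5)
Your proposal is correct and follows essentially the same route as the paper: your solution operator $T$ is exactly the paper's $\widetilde{\mathcal{S}}_i=\mathcal{S}_iT$ (multiplication by $\widetilde{\kappa}$ composed with the inverse of $\mathcal{L}_i$), your compactness argument via $V_i\hookrightarrow L^2(\omega_i)\hookrightarrow W_i$ mirrors the paper's Lemma \ref{lem:embedding}, and the conclusion via the spectral theorem for compact self-adjoint operators is identical. The only cosmetic difference is that you spell out the coercivity/Lax--Milgram step on the zero-mean space, which the paper asserts without detail.
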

To prove Theorem \ref{lem:eigenvalue-blowup}, we need a few notation. Let $\mathcal{S}_i:=\mathcal{L}_i^{-1}: V^{*}_i\to V_i$
be the inverse of the elliptic operator $\mathcal{L}_i$.
Denote $T:W_i\to L^2_{\widetilde{\kappa}^{-1}}(\omega_i)$ to be the multiplication operator defined by
\begin{align}\label{eq:T}
Tv:=\widetilde{\kappa}v \quad\text{ for all }\quad v\in W_i.
\end{align}
One can show by definition directly that $T$ is a bounded operator with unit norm. Moreover, there holds
\[
\int_{\omega_i}Tv\;\dx=0 \quad\text{ for all }v\in W_i.
\]
Thus the range of $T$, $\mathcal{R}(T)$, is a subspace in $L^2_{\widetilde{\kappa}^{-1}}(\omega_i)$ with codimension one, and we have
\begin{align}\label{eq:R(T)}
\mathcal{R}(T)\hookrightarrow V_i^{*}.
\end{align}

For the proof of Theorem \ref{lem:eigenvalue-blowup}, we need the following compact embedding result.
\begin{lemma}\label{lem:embedding}
$V_i$ is compactly embedded into $W_i$, i.e.,
$V_i \hookrightarrow\hookrightarrow W_i.$
\end{lemma}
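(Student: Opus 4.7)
\textbf{Proof plan for Lemma \ref{lem:embedding}.}

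The plan is to reduce the weighted statement to the classical Rellich--Kondrachov compactness on $\omega_i$. Fix a sequence $\{v_n\}\subset V_i$ that is bounded in the norm of $H^1_{\kappa}(\omega_i)$; we must extract a subsequence that converges in $W_i$.

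First, I would pass from the $\kappa$-weighted norm to the unweighted $H^1$ norm. Because $\kappa\ge \alpha>0$ by assumption, the energy part of the norm immediately yields a uniform bound $\|\nabla v_n\|_{L^2(\omega_i)}\le \alpha^{-1/2}\seminormE{v_n}{\omega_i}$. The delicate step is to control $\|v_n\|_{L^2(\omega_i)}$ itself, since $\widetilde\kappa$ may vanish and hence $\normLT{v_n}{\omega_i}$ does not dominate the unweighted $L^2$ norm. I would proceed via Poincar\'e--Wirtinger: setting $m_n:=\fint_{\omega_i}v_n\,\dx$, one has $\|v_n-m_n\|_{L^2(\omega_i)}\le C_{\mathrm{PW}}\|\nabla v_n\|_{L^2(\omega_i)}$. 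Using the defining constraint $\int_{\omega_i}\widetilde\kappa v_n\,\dx=0$, we rewrite
\begin{equation*}
m_n\int_{\omega_i}\widetilde\kappa\,\dx=-\int_{\omega_i}\widetilde\kappa(v_n-m_n)\,\dx,
\end{equation*}
and since the standing assumption $|D\cap\{\widetilde\kappa=0\}|=0$ and $\widetilde\kappa\in L^\infty(\omega_i)$ guarantee $0<\int_{\omega_i}\widetilde\kappa\,\dx<\infty$, Cauchy--Schwarz produces $|m_n|\le C\|v_n-m_n\|_{L^2(\omega_i)}$. Combining these estimates yields a uniform bound $\|v_n\|_{H^1(\omega_i)}\le C$.

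Next, because $\omega_i$ is a bounded Lipschitz domain (it is a union of coarse elements), the classical Rellich--Kondrachov theorem supplies a subsequence, still denoted $\{v_n\}$, that converges in $L^2(\omega_i)$ to some $v\in L^2(\omega_i)$. Using $\widetilde\kappa\in L^\infty(\omega_i)$ once more,
\begin{equation*}
\|v_n-v\|_{L^2_{\widetilde\kappa}(\omega_i)}^2=\int_{\omega_i}\widetilde\kappa(v_n-v)^2\,\dx\le \|\widetilde\kappa\|_{L^\infty(\omega_i)}\,\|v_n-v\|_{L^2(\omega_i)}^2\longrightarrow 0,
\end{equation*}
so $v_n\to v$ in $L^2_{\widetilde\kappa}(\omega_i)$. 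Finally, the linear functional $w\mapsto \int_{\omega_i}\widetilde\kappa w\,\dx$ is continuous on $L^2_{\widetilde\kappa}(\omega_i)$, so the constraint passes to the limit and $v\in W_i$, establishing the compact embedding.

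The only nontrivial step is the second one: controlling the unweighted $L^2$ norm of $v_n$ under a constraint that is weighted by a potentially degenerate $\widetilde\kappa$. The qualitative assumption that $\widetilde\kappa$ does not vanish on a set of positive measure is exactly what makes this average-recovery argument work, and everything else reduces to routine applications of Poincar\'e--Wirtinger, $\kappa\ge\alpha$, $\widetilde\kappa\in L^\infty$, and Rellich--Kondrachov.
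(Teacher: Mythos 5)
Your argument is correct, and at the top level it follows the same route as the paper: use the uniform upper bound on $\widetilde\kappa$ (from the gradient bound on the $\chi_j$ and the overlap condition) and the two-sided bounds on $\kappa$ to reduce the weighted statement to the classical Rellich--Kondrachov compactness of $H^1(\omega_i)\hookrightarrow L^2(\omega_i)$, then push the $L^2(\omega_i)$ convergence into $L^2_{\widetilde\kappa}(\omega_i)$. The genuine difference is your second step. The paper simply asserts that ``the classical Sobolev embedding and boundedness of $\kappa$ imply the compactness of $V_i\hookrightarrow\hookrightarrow L^2(\omega_i)$,'' which tacitly requires that a bound in the $H^1_\kappa(\omega_i)$ norm controls the full unweighted $H^1(\omega_i)$ norm; this is not immediate, because the zeroth-order part of $\normE{\cdot}{\omega_i}$ is weighted by the possibly degenerate $\widetilde\kappa$, so it does not dominate $\|v\|_{L^2(\omega_i)}$. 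Your Poincar\'e--Wirtinger argument---recovering the mean $m_n=|\omega_i|^{-1}\int_{\omega_i}v_n\,\dx$ from the constraint $\int_{\omega_i}\widetilde\kappa v_n\,\dx=0$ together with $0<\int_{\omega_i}\widetilde\kappa\,\dx<\infty$---is exactly the missing ingredient, and it is where the standing assumption $|D\cap\{\widetilde\kappa=0\}|=0$ and the membership $v_n\in V_i$ (rather than merely $v_n\in H^1_\kappa(\omega_i)$) are actually used. So your write-up is a more complete version of the paper's proof rather than a different one; the only cosmetic difference is that the paper phrases the last step through the chain of continuous embeddings $L^2_{\widetilde\kappa^{-1}}(\omega_i)\hookrightarrow L^2(\omega_i)\hookrightarrow L^2_{\widetilde\kappa}(\omega_i)$, whereas you verify the same inequality directly and additionally check that the codimension-one constraint passes to the limit.
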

\begin{proof}
By Remark \ref{rem:chi}, the uniform boundedness of $\kappa$, the definition of $\widetilde{\kappa}$
and the overlapping condition \eqref{eq:overlap}, we obtain the boundedness of $\tilde{\kappa}$, i.e.,
\begin{align}\label{eq:upper_tilde}
\|\widetilde{\kappa}\|_{L^{\infty}(D)}\leq C_{\text{ov}}(HC_{0})^2\kappa\leq C_{\text{ov}}(HC_{0})^2\beta.
\end{align}
Hence, there holds the following embedding inequalities:
\[
L^2_{\widetilde{\kappa}^{-1}}(\omega_i)\hookrightarrow L^2(\omega_i)\hookrightarrow L^2_{\widetilde{\kappa}}(\omega_i).
\]
This, the classical Sobolev embedding \cite{adams2003sobolev} and boundedness of $\kappa$ imply
the compactness of the embedding $V_i\hookrightarrow\hookrightarrow L^2(\omega_i)$ and thus, we
finally arrive at $V_i \hookrightarrow\hookrightarrow W_i$. This completes the proof.
\end{proof}
\begin{proof}[Proof of Theorem \ref{lem:eigenvalue-blowup}]
By \eqref{eq:R(T)}, the multiplication operator $T: W_i\to V_i^*$ is bounded.
Similarly, the operator $\mathcal{S}_i:V_i^*\to W_i$ is compact, in view of Lemma \ref{lem:embedding}.
Let $\widetilde{\mathcal{S}}_i:=\mathcal{S}_i T$.
Then the operator $\widetilde{\mathcal{S}}_i:W_i\to W_i$ is nonnegative and {compact}.
Now we claim that $\widetilde{\mathcal{S}}_i$ is self-adjoint on $W_i$.
Indeed, for all $v,w\in W_i$, we have
\begin{align*}
(\widetilde{\mathcal{S}}_i v, w)_i&=(\mathcal{S}_i Tv, w)_i=\int_{\omega_i}\widetilde{\kappa}\mathcal{L}_i^{-1}(\widetilde{\kappa}v) w\;\dx\\
&=\int_{\omega_i}\mathcal{L}_i^{-1}(\widetilde{\kappa}v) (\widetilde{\kappa}w)\;\dx\\
&=(v,(\mathcal{S}_i T)w)_i=(v,\widetilde{\mathcal{S}}_iw)_i,
\end{align*}
where we have used the weak formulation for \eqref{eq:Li} to deduce
$
\int_{\omega_i}\mathcal{L}_i^{-1}(\widetilde{\kappa}v) (\widetilde{\kappa}w)\dx=\int_{\omega_i}(\widetilde{\kappa}v)\mathcal{L}_i^{-1} (\widetilde{\kappa}w)\dx.
$
By the standard spectral theory for compact operators \cite{yosida78}, it has at most countably many discrete eigenvalues, with zero being
the only accumulation point, and each nonzero eigenvalue has only finite multiplicity.
Noting that $\big\{\big((\lambda_j^{\si})^{-1},
v_j^{\si}\big)\big\}_{j=1}^{\infty}$ are the eigenpairs of $\widetilde{\mathcal{S}}_i$ completes the proof.
\end{proof}
Furthermore, by the construction, the eigenfunctions $\{ v_j^{\si}\}_{j=1}^{\infty}$
form a complete orthonormal bases (CONB) in $W_i$, and $\{\sqrt{\lambda_j^{\si}+1}{v_j^{\si}}\}_{j=1}^{\infty}$
form a CONB in $V_i$. Further, we have $L^2_{\widetilde{\kappa}}(\omega_i)=W_i\oplus \{1\}$.
Hence, $\{ v_j^{\si}\}_{j=1}^{\infty}\oplus \{1\}$ is a complete orthogonal bases in
$L^2_{\widetilde{\kappa}}(\omega_i)$ [Chapters 4 and 5]\cite{laugesen}\footnote{We thank Richard S. Laugesen (University of Illinois, Urbana-Champaign) for clarifying the convergence in $H^1_{\kappa}(\omega_i)$.}.
\begin{lemma}\label{lem:L2Inv}
The series $\{ \widetilde{\kappa} v_j^{\si}\}_{j=1}^{\infty}\oplus \{\widetilde{\kappa}\}$ forms a complete
orthogonal bases in $L_{\widetilde{\kappa}^{-1}}^2(\omega_i)$.
\end{lemma}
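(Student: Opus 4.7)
The plan is to prove the lemma by exhibiting an isometric isomorphism between $L^2_{\widetilde{\kappa}}(\omega_i)$ and $L^2_{\widetilde{\kappa}^{-1}}(\omega_i)$ under which the already-known orthogonal basis $\{v_j^{\si}\}_{j=1}^\infty \oplus \{1\}$ of $L^2_{\widetilde{\kappa}}(\omega_i)$ is carried to the proposed basis. Specifically, define $\Phi: L^2_{\widetilde{\kappa}}(\omega_i) \to L^2_{\widetilde{\kappa}^{-1}}(\omega_i)$ by $\Phi(u) = \widetilde{\kappa} u$. Using the convention that $\widetilde{\kappa}^{-1}$ is extended by $1$ on the (null) set $\{\widetilde{\kappa} = 0\}$ as in \eqref{eq:inv-tildeKappa}, and using the assumption $|D \cap \{\widetilde{\kappa} = 0\}| = 0$, a direct computation gives
\begin{equation*}
\|\Phi(u)\|_{L^2_{\widetilde{\kappa}^{-1}}(\omega_i)}^2 = \int_{\omega_i} \widetilde{\kappa}^{-1}(\widetilde{\kappa}u)^2\,\dx = \int_{\omega_i}\widetilde{\kappa}\,u^2\,\dx = \|u\|_{L^2_{\widetilde{\kappa}}(\omega_i)}^2,
\end{equation*}
so $\Phi$ is an isometry. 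Its inverse is $\Phi^{-1}(w) = \widetilde{\kappa}^{-1}w$, whose boundedness is seen by the same identity, so $\Phi$ is a unitary isomorphism of Hilbert spaces.

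Next I would verify the orthogonality of $\{\widetilde{\kappa}v_j^{\si}\} \cup \{\widetilde{\kappa}\}$ in $L^2_{\widetilde{\kappa}^{-1}}(\omega_i)$ either directly or as a consequence of the isometry: for $j \neq k$,
\begin{equation*}
\int_{\omega_i}\widetilde{\kappa}^{-1}(\widetilde{\kappa}v_j^{\si})(\widetilde{\kappa}v_k^{\si})\,\dx = \int_{\omega_i}\widetilde{\kappa}\,v_j^{\si}v_k^{\si}\,\dx = 0,
\end{equation*}
and $\int_{\omega_i}\widetilde{\kappa}^{-1}(\widetilde{\kappa}v_j^{\si})\widetilde{\kappa}\,\dx = \int_{\omega_i}\widetilde{\kappa}v_j^{\si}\,\dx = 0$ since $v_j^{\si} \in W_i$.

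For completeness, given any $w \in L^2_{\widetilde{\kappa}^{-1}}(\omega_i)$, set $u := \widetilde{\kappa}^{-1}w = \Phi^{-1}(w) \in L^2_{\widetilde{\kappa}}(\omega_i)$. By the known completeness of $\{v_j^{\si}\}_{j=1}^\infty \oplus \{1\}$ in $L^2_{\widetilde{\kappa}}(\omega_i)$, one can write $u = c_0 + \sum_{j=1}^\infty c_j v_j^{\si}$ with convergence in $L^2_{\widetilde{\kappa}}(\omega_i)$, where $c_j = (u,v_j^{\si})_i$ for $j \geq 1$ and $c_0 = (u,1)_i/\|1\|_{L^2_{\widetilde{\kappa}}(\omega_i)}^2$. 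Applying the isometry $\Phi$ termwise gives $w = c_0\widetilde{\kappa} + \sum_{j=1}^\infty c_j \widetilde{\kappa}v_j^{\si}$ with convergence in $L^2_{\widetilde{\kappa}^{-1}}(\omega_i)$, which is exactly the claimed expansion.

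I expect no serious obstacle here; the only subtlety is the potential vanishing of $\widetilde{\kappa}$, but the assumption $|D \cap \{\widetilde{\kappa} = 0\}| = 0$ noted in the remark preceding Theorem \ref{lem:eigenvalue-blowup} ensures the isometry identity holds pointwise a.e., so all integrals and the construction of $\Phi^{-1}$ are unambiguous. The proof is thus essentially a transfer of the structure from $L^2_{\widetilde{\kappa}}(\omega_i)$ to $L^2_{\widetilde{\kappa}^{-1}}(\omega_i)$ through the canonical multiplication isomorphism.
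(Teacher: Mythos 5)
Your proposal is correct and is essentially the same argument as the paper's: both rest on the fact that multiplication by $\widetilde{\kappa}$ carries the known complete orthogonal basis $\{v_j^{\si}\}_{j=1}^{\infty}\oplus\{1\}$ of $L^2_{\widetilde{\kappa}}(\omega_i)$ to the claimed system, the paper performing the orthogonality computations inline and proving completeness by showing that any $v$ orthogonal to the system satisfies $\widetilde{\kappa}^{-1}v\in L^2_{\widetilde{\kappa}}(\omega_i)$ and is orthogonal to the full basis there, hence vanishes. Your packaging of this as an explicit unitary $\Phi(u)=\widetilde{\kappa}u$ (already anticipated by the operator $T$ in Remark \ref{rem:dual}) is only a presentational variant, and your handling of the null set $\{\widetilde{\kappa}=0\}$ is consistent with the paper's convention \eqref{eq:inv-tildeKappa}.
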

\begin{proof}
First, we show that $\{ \widetilde{\kappa} v_j^{\si}\}_{j=1}^{\infty}\oplus \{\widetilde{\kappa}\}$ are orthogonal in $L_{\widetilde{\kappa}^{-1}}^2(\omega_i)$.
Indeed, by definition, we deduce that for all $j\in \mathbb{N}_{+}$
\begin{align*}
\int_{\omega_i}\widetilde{\kappa}^{-1}\widetilde{\kappa}\cdot  \widetilde{\kappa} v_j^{\si}\dx
=\int_{\omega_i}\widetilde{\kappa} v_j^{\si}\dx=(v_j^{\si},1)_i=0.
\end{align*}
Meanwhile, for all $j,k\in \mathbb{N}_{+}$, there holds
\begin{align*}
\int_{\omega_i}\widetilde{\kappa}^{-1}\widetilde{\kappa}v_k^{\si}\cdot  \widetilde{\kappa} v_j^{\si}\dx
=\int_{\omega_i}\widetilde{\kappa} v_j^{\si}\cdot v_k^{\si}\dx=(v_j^{\si},v_k^{\si})_i=\delta_{j,k}.
\end{align*}
Next we show that $\{ \widetilde{\kappa} v_j^{\si}\}_{j=1}^{\infty}\oplus \{\widetilde{\kappa}\}$ are complete in $L_{\widetilde{\kappa}^{-1}}^2(\omega_i)$.
Actually, for any $v\in L_{\widetilde{\kappa}^{-1}}^2(\omega_i)$ such that
\begin{equation}\label{eq:9}
\begin{aligned}
\int_{\omega_i}\widetilde{\kappa}^{-1}v\cdot  \widetilde{\kappa}\dx=0\quad
\text{and }\quad \forall j\in \mathbb{N}_{+}:
\int_{\omega_i}\widetilde{\kappa}^{-1}v\cdot  \widetilde{\kappa} v_j^{\si}\dx=0,
\end{aligned}
\end{equation}
we deduce directly from definition that
\begin{align*}
\int_{\omega_i}\widetilde{\kappa}(\widetilde{\kappa}^{-1}v)^2\dx
=\int_{\omega_i\cap\{\widetilde{\kappa}\ne 0\}}\widetilde{\kappa}^{-1}v^2\dx<\infty.
\end{align*}
This implies that $\widetilde{\kappa}^{-1}v\in L^2_{\widetilde{\kappa}}(\omega_i)$.
Furthermore, \eqref{eq:9} indicates that $\widetilde{\kappa}^{-1}v$
is orthogonal to a set of complete orthogonal basis functions $\{ v_j^{\si}\}_{j=1}^{\infty}\oplus \{1\}$ in $L^2_{\widetilde{\kappa}}(\omega_i)$. Therefore, $v=0$, which completes the proof.
\end{proof}
\begin{remark}\label{rem:dual}
Since $L^2_{\widetilde{\kappa}^{-1}}(\omega_i)$ is a Hilbert space, we can identify its dual with itself, and there exists an isometry between $L^2_{\widetilde{\kappa}}(\omega_i)$ and $L^2_{\widetilde{\kappa}^{-1}}(\omega_i)$, e.g., the operator $T$ in \eqref{eq:T}. We identify  $L^2_{\widetilde{\kappa}}(\omega_i)$ as the dual of $L^2_{\widetilde{\kappa}^{-1}}(\omega_i)$.
\end{remark}

Now we define the local spectral basis functions on $\omega_i$ for all
$i=1,\cdots, N$. Let $\ell_i^{\roma}\in \mathbb{N}_{+}$ be a prespecified number, denoting the number of local
basis functions associated with $\omega_i$. We take the eigenfunctions corresponding to the first
$(\ell_i^{\roma}-1)$ smallest eigenvalues for problem \eqref{eq:spectral} in addition to the
kernel of the elliptic operator $\mathcal{L}_i$, namely, $\{1\}$, to construct the local spectral offline space:
\[
V_{\text{off}}^{\text{S}_i,\ell_i^{\roma}}= \text{span}\{ v_{j}^{\si}:~~ 1\leq j <\ell_i^{\roma}\}\oplus \{1\}.
\]
Then $\dim(V_{\text{off}}^{\text{S}_i,\ell_i^{\roma}})=\ell_i^{\roma}$. The choice of the truncation number
$\ell_i^{\roma}\in \mathbb{N}_{+}$ has to be determined by the eigenvalue decay rate or the presence of
spectral gap. The space $V_{\text{off}}^{\text{S}_i,\ell_i^{\roma}}$ allows defining a finite-rank projection
operator $\mathcal{P}^{\si,\ell_i^{\roma}}: L^2_{\widetilde{\kappa}}(\omega_i)\to V_{\text{off}}^{\text{S}_i,
\ell_i^{\roma}}$ by (with the constant $c_0=\big(\int_{\omega_i}\widetilde{\kappa}  \dx \big)^{-1}$):
\begin{align}\label{eq:FR_spec}
\mathcal{P}^{\si,\ell_i^{\roma}}v=c_0(v,1)_i+\sum\limits_{j=1}^{\ell_i^{\roma}-1}(v,v_j^{\si})_i v_j^{\si}\ \ \text{ for all } v\in L_{\tilde \kappa}^2(\omega_i).
\end{align}
The operator $\mathcal{P}^{\si,\ell_i^{\roma}}$ will play a role in the convergence analysis.

\subsubsection*{Local Steklov eigenvalue problem II}
The local Steklov eigenvalue problem can be formulated as to seeking $(\lambda_{j}^{\ti}, v_{j}^{\ti})\in \mathbb{R}\times H^1_{\kappa}(\omega_i)$ such that
\begin{alignat}{2}\label{eq:steklov}
-\nabla\cdot(\kappa\nabla v_{j}^{\ti})  &= 0 &&\quad\text{in} \, \, \, \omega_i,\\
\kappa\frac{\partial}{\partial n}v_{j}^{\ti}&=\lambda_{j}^{\ti} v_{j}^{\ti}&&\quad\text{ on } \partial \omega_i.\nonumber
\end{alignat}
It is well known that the spectrals of the Steklov eigenvalue problem blow up \cite{MR2770439}:
\begin{theorem}\label{lem:steklov-blowup}
Let $\{(\lambda_j^{\ti},v_j^{\ti})\}_{j=1}^{\infty}$ be the eigenvalues and the corresponding normalized eigenfunctions in $L^2(\partial\omega_i)$ to the spectral problem \eqref{eq:steklov} listed according to their algebraic
multiplicities and the eigenvalues are ordered nondecreasingly. There holds
\begin{align*}
\lambda_j^{\ti}\to \infty\quad \text{ as } j\to \infty.
\end{align*}
\end{theorem}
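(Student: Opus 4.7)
The plan is to mirror the operator-theoretic argument already used for Theorem \ref{lem:eigenvalue-blowup}: reduce the eigenvalue problem \eqref{eq:steklov} to the spectral problem of a compact, self-adjoint, nonnegative operator on a Hilbert space of boundary functions, and then invoke the spectral theorem for compact self-adjoint operators to conclude that the reciprocals $(\lambda_j^{\ti})^{-1}$ accumulate only at $0$, which is equivalent to the claim $\lambda_j^{\ti}\to\infty$.

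First I would notice that constants are eigenfunctions of \eqref{eq:steklov} with eigenvalue $0$, so the genuine spectrum lives on the codimension-one subspace $H_\star:=\{g\in L^2(\partial\omega_i):\int_{\partial\omega_i}g\,ds=0\}$. For $g\in H_\star$ I would define a Neumann-to-Dirichlet operator $\mathcal{N}_i:H_\star\to H_\star$ by $\mathcal{N}_i g := u_g|_{\partial\omega_i} - c_g$, where $u_g\in H^1_\kappa(\omega_i)$ solves the Neumann problem
\begin{equation*}
-\nabla\cdot(\kappa\nabla u_g)=0 \text{ in }\omega_i,\qquad \kappa\tfrac{\partial u_g}{\partial n}=g \text{ on }\partial\omega_i,
\end{equation*}
normalized (e.g.\ so that $\int_{\partial\omega_i} u_g\,ds=0$ to fix the constant $c_g$); the compatibility condition $\int_{\partial\omega_i}g\,ds=0$ guarantees solvability and the weighted Friedrichs/Poincaré inequality (Theorem \ref{thm:friedrichs}, applied in a mean-zero version) gives uniqueness and the bound $\seminormE{u_g}{\omega_i}\lesssim \|g\|_{L^2(\partial\omega_i)}$. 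By construction, the pairs $\bigl((\lambda_j^{\ti})^{-1}, v_j^{\ti}|_{\partial\omega_i}\bigr)$ are exactly the nonzero eigenpairs of $\mathcal{N}_i$.

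Next, I would check the three key properties. Self-adjointness of $\mathcal{N}_i$ follows from the symmetry of the bilinear form $a_i(\cdot,\cdot)=\int_{\omega_i}\kappa\nabla\cdot\nabla$: for $g,h\in H_\star$,
\begin{equation*}
(\mathcal{N}_i g,h)_{L^2(\partial\omega_i)}=\int_{\partial\omega_i} u_g\,h\,ds = a_i(u_g,u_h)=a_i(u_h,u_g)=(g,\mathcal{N}_i h)_{L^2(\partial\omega_i)},
\end{equation*}
where I use the weak formulation of the Neumann problem with the test function $u_h$ and then swap roles. Nonnegativity is immediate since $(\mathcal{N}_i g,g)=a_i(u_g,u_g)\geq 0$. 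For compactness I would chain together boundedness of the solution map $g\mapsto u_g$ from $H_\star$ into $H^1_\kappa(\omega_i)$, the continuous trace operator $H^1(\omega_i)\to H^{1/2}(\partial\omega_i)$, and the Rellich compact embedding $H^{1/2}(\partial\omega_i)\hookrightarrow\hookrightarrow L^2(\partial\omega_i)$, which holds since $\partial\omega_i$ is $(d-1)$-dimensional and Lipschitz under Assumption \ref{ass:coeff}. This factorization sends bounded sets in $H_\star$ to relatively compact sets in $L^2(\partial\omega_i)$.

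Finally, I would invoke the classical Hilbert–Schmidt spectral theorem for compact self-adjoint operators \cite{yosida78}: $\mathcal{N}_i$ has at most countably many eigenvalues $\{\mu_j\}$, each nonzero $\mu_j$ of finite multiplicity, accumulating only at $0$; taking reciprocals yields \eqref{eq:steklov_eigenvalue_claim_analog}, i.e.\ $\lambda_j^{\ti}\to\infty$. The main obstacle, and the step I would handle most carefully, is the book-keeping with the constant eigenfunction and the compatibility/normalization: one must quotient out constants consistently on both the domain side (to apply Friedrichs and get the solution map bounded) and the boundary side (to make $\mathcal{N}_i$ a self-map of $H_\star$), and verify that the nonzero spectrum of $\mathcal{N}_i$ really coincides with $\{1/\lambda_j^{\ti}\}_{j\geq 2}$, together with $\lambda_1^{\ti}=0$ contributed by constants.
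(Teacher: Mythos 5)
Your proof is correct, but it is worth noting that the paper does not actually prove Theorem \ref{lem:steklov-blowup} at all: it simply records the statement as well known and cites \cite{MR2770439}. What you have done is supply a self-contained argument modeled on the paper's own proof of Theorem \ref{lem:eigenvalue-blowup}, replacing the operator $\widetilde{\mathcal{S}}_i=\mathcal{S}_iT$ on $W_i$ by a Neumann-to-Dirichlet operator $\mathcal{N}_i$ on the mean-zero subspace of $L^2(\partial\omega_i)$. The reduction to $H_\star$, the identification of the nonzero Steklov eigenvalues with the reciprocals of the nonzero eigenvalues of $\mathcal{N}_i$, the symmetry via $(\mathcal{N}_ig,h)_{\partial\omega_i}=a_i(u_g,u_h)$, and the compactness chain (solution map, trace into $H^{1/2}(\partial\omega_i)$, Rellich on the boundary) are all sound; note that your well-posedness of the Neumann problem also uses that $\int_{\partial\omega_i}v_j^{\ti}\,\mathrm{d}s=0$ for $\lambda_j^{\ti}\neq 0$, which follows from the divergence theorem, so the eigenfunctions with nonzero eigenvalue do land in $H_\star$ on both the data and the image side. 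Two small caveats: the bound $\seminormE{u_g}{\omega_i}\lesssim\|g\|_{L^2(\partial\omega_i)}$ rests on a trace/Poincar\'e inequality for boundary-mean-zero functions rather than on Theorem \ref{thm:friedrichs} as stated (which is for $H^1_0$), and its constant may depend on the contrast of $\kappa$ --- harmless here because the theorem is purely qualitative, but worth saying explicitly; and the label you cite for the final conclusion does not exist in the paper, so the last display should just state $\lambda_j^{\ti}\to\infty$. The payoff of your route over the paper's citation is uniformity of method: the same compact--self-adjoint template handles both local spectral problems, which makes the exposition more self-contained at the cost of about a page.
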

Note that $\lambda_1^{\ti}=0$ and $v_1^{\ti}$ is a constant. Furthermore, the series $\big\{v_j^{\ti}\big\}_{j=1}^{\infty}$ forms a complete orthonormal bases in $L^2(\partial\omega_i)$. Below we use the notation $(\cdot,\cdot)_{\partial\omega_i}$ to denote the inner product on $L^2(\partial\omega_i)$.
Similarly, we define a local spectral space of dimension $\ell_i^{\romb}$ and the associated $\ell_i^{\romb}$-rank projection operator:
\begin{align}
V_{\text{off}}^{\text{T}_i,\ell_i^{\romb}}&= \text{span}\{ v_{j}^{\ti}:~~ 1\leq j \leq\ell_i^{\romb}\},\nonumber\\
\mathcal{P}^{\ti,\ell_i^{\romb}}v
&=\sum\limits_{j=1}^{\ell_i^{\romb}}( v,v_j^{\ti})_{\partial\omega_i} v_j^{\ti}\ \ \text{ for all } v\in L^2(\partial\omega_i).\label{eq:steklov_spec}
\end{align}
In addition to these local spectral basis functions defined in
Problems \eqref{eq:spectral} and \eqref{eq:steklov}, we need one more local basis function defined by the following local problem:
\begin{equation}\label{eq:1-basis}
\left\{
\begin{aligned}
-\nabla\cdot(\kappa\nabla v^{i})&=\frac{\widetilde{\kappa}}{\int_{\omega_i}\widetilde{\kappa}\dx} \quad&&\text{ in } \omega_i,\\
-\kappa\frac{\partial v^{i}}{\partial n}&=|\partial\omega_i|^{-1}\quad&&\text{ on }\partial \omega_i.
\end{aligned}
\right.
\end{equation}
Note that the approximation property of $V_{\text{off}}^{\text{S}_i,\ell_i^{\roma}}$, $V_{\text{off}}^{\text{T}_i,
\ell_i^{\romb}}$ to the local solution $u|_{\omega_i}$ is of great importance to the analysis of  multiscale
methods \cite{melenk1996partition,EFENDIEV2011937}. We  present relevant results in Section \ref{subsec:spectral} below.
\subsubsection*{Local harmonic extension bases}
This type of local multiscale bases is defined by local solvers over $\omega_i$. The number of
such local solvers is problem-dependent. It can be the space of all fine-scale finite element basis functions or the
solutions of some local problems with suitable choices of boundary conditions. In this work, we consider the following
$\kappa$-harmonic extensions to form the local multiscale space, which has been extensively used
in the literature. Specifically, given a fine-scale piecewise linear function $\delta_j^h(x)$ defined on
the boundary $\partial\omega_i$, let $\phi_{j}^{\hi}$ be the solution to the following
Dirichlet boundary value problem:
\begin{alignat}{2} \label{harmonic_ex}
-\nabla\cdot(\kappa(x) \nabla \phi_{j}^{\hi} ) &= 0
\quad &&\text{in} \, \, \, \omega_i,\\
\phi_{j}^{\hi}&=\delta_j^h &&\text{ on }\partial\omega_i,\nonumber
\end{alignat}
where $\delta_j^h(x):=\delta_{j,k}\,\text{ for all } j,k\in \textsl{J}_{h}(\omega_i)$ with $\delta_{j,k}$ denoting the Kronecker
delta symbol, and $\textsl{J}_{h}(\omega_i)$ denoting the set of all fine-grid boundary nodes on $\partial\omega_i$.
Let $L_i$ be the number of the local multiscale functions on $\omega_i$. Then the local multiscale space $V^{\hi}_{\text{snap}}$ on $\omega_i$ is defined by
\begin{align}\label{eq:Vharmonic}
V^{\hi}_{\text{snap}}:=\text{span}\{\phi_j^{\hi}: \quad 1\leq j\leq L_i\}.
\end{align}
Its approximation property will be discussed in Section \ref{subsec:harmonic}.

\subsection*{Discrete POD}
One challenge associated with the local multiscale space $V^{\hi}_{\text{snap}}$ lies in the fact that its
dimensionality can be very large, i.e., $L_i\gg1$, when the problem becomes increasingly complicated in the sense that there
are more multiple scales in the coefficient $\kappa$. Thus, the discrete POD is often employed on $\omega_i$ to reduce
the dimensionality of $V^{\hi}_{\text{snap}}$, while maintaining a certain accuracy.

The discrete POD proceeds as follows. {After obtaining} a large number of local multiscale  functions $\{\phi_{j}^{\hi}\}_{j=1}^{L_i}$, with $L_i\gg 1$, by solving the local problem \eqref{harmonic_ex}, we generate a {problem adapted subset of much smaller size} from these basis functions by means of singular
value decomposition, by taking only left singular vectors corresponding to the largest singular values. The resulting low-dimensional linear subspace with $\ell_i$ singular vectors is termed as the offline space of rank $\ell_i$.

The auxiliary spectral problem in the construction is to find $( \lambda_j^{\hi}, v_j)\in \mathbb{R}\times \mathbb{R}^{L_i}$ for $1\leq j\leq L_i$ with the eigenvalues $\{\lambda_j^{\hi}\}_{j=1}^{L_i}$ in a nondecreasing order (with multiplicity counted) such that
\begin{align} \label{offeig}
A^{\text{off}} v_j& = \lambda_j^{\hi} S^{\text{off}} v_j,\\
(S^{\text{off}} v_j,v_j)_{\ell^2}&=1\nonumber.
\end{align}
The matrices $A^{\text{off}}, S^{\text{off}}\in \mathbb{R}^{L_i\times L_i}$ are respectively defined by
\begin{equation*}
 \displaystyle A^{\text{off}} = [a_{mn}^{\text{off}}] = \int_{\omega_i} \kappa\nabla \phi_m^{\hi} \cdot \nabla \phi_n^{\hi}\dx \quad\text{ and }\quad
 \displaystyle S^{\text{off}} = [s_{mn}^{\text{off}}] = \int_{\omega_i}  \widetilde{\kappa} \phi_m^{\hi} \cdot\phi_n^{\hi}\dx .
\end{equation*}
Let $\mathbb{N}_{+}\ni \ell_i\leq L_i$ be a truncation number. Then we define the discrete POD-basis of rank $\ell_i$ by
\begin{align}\label{eq:pod-basis}
v_j^{\hi}:=\sum\limits_{k=1}^{L_i}(v_j)_{k}\phi_{k}^{\hi}\;\quad
\text{ for }j=1,\cdots,\ell_i,
\end{align}
with $(v_j)_{k}$ being the $k^{\text{th}}$ component of the eigenvector $v_j\in\mathbb{R}^{L_i}$. By the definition of the
discrete eigenvalue problem \eqref{offeig}, we have
\begin{align}\label{eq:podNorm}
(v_j^{\hi}, v_k^{\hi})_i =\delta_{jk} \quad \text{ and } \quad \int_{\omega_i}\kappa \nabla v_j^{\hi}\cdot\nabla v_k^{\hi}\dx =\lambda_j^{\hi}\delta_{jk} \qquad\text{ for all } 1\leq j,k\leq \ell_i.
\end{align}
The local offline space $V^{\text{H}_i,\ell_i}_{\text{off}}$ of rank $\ell_i$ is spanned by the first $\ell_i$
eigenvectors corresponding to the smallest eigenvalues for problem \eqref{offeig}:
\begin{align*}
V^{\text{H}_i,\ell_i}_{\text{off}} := \text{span}\left\{v_j^{\hi}: \quad 1\leq j\leq \ell_i  \right\}.
\end{align*}
Analogously, we can define a rank $\ell_i$ projection operator $\mathcal{P}^{\si,\ell_i}: V_{\text{snap}}^{\hi}\to
V_{\text{off}}^{\hi,\ell_i}$ for all $\mathbb{N}_{+}\ni \ell_i\leq L_i$ by
\begin{equation}\label{eqn:proj-pod}
\mathcal{P}^{\hi,\ell_i}v=\sum\limits_{j=1}^{\ell_i}(v,v_j^{\hi})_i v_j^{\hi}\ \ \text{ for all } v\in V_{\text{snap}}^{\hi}.
\end{equation}
This projection is crucial to derive the error estimate for the discrete POD basis.
Its approximation property will be discussed in Section \ref{sec:discretePOD}.

\subsection{Galerkin approximation}
\label{globcoupling}
Next we define three types of global multiscale basis functions based on the local multiscale basis functions introduced in
Section \ref{locbasis} by partition of unity functions subordinated to the set of coarse neighborhoods $\{\omega_i\}_{i=1}^N$.
This gives rise to three multiscale methods for solving Problem \eqref{eqn:pde} that can approximate reasonably the exact solution $u$ (or
the fine-scale solution $u_h$).

We begin with an initial coarse space $V^{\text{init}}_0 = \text{span}\{ \chi_i \}_{i=1}^{N}$.
The functions $\chi_i$ are the standard multiscale basis functions on each coarse element $K\in \mathcal{T}^{H}$ defined by
\begin{alignat}{2} \label{pou}
-\nabla\cdot(\kappa(x)\nabla\chi_i) &= 0  &&\quad\text{ in }\;\;K, \\
\chi_i &= g_i &&\quad\text{ on }\partial K, \nonumber
\end{alignat}
where $g_i$ is affine over $\partial K$ with $g_i(O_j)=\delta_{ij}$ for all $i,j=1,\cdots, N$. Recall that $\{O_j\}_{j=1}^{N}$ are the set of coarse nodes on $\mathcal{T}^{H}$.
\begin{remark}[Properties of $\chi_i$]\label{rem:chi}
The definition \eqref{pou} implies that $\text{supp}(\chi_i)=\omega_i$. Thus, we have
\begin{align}\label{eq:chi_supp}
\chi_i=0\quad \text{ on }\partial \omega_i.
\end{align}
Furthermore, the maximum principle implies
$0\leq \chi_i\leq 1.$
Note that under Assumption \ref{ass:coeff}, the gradient of the multiscale basis functions
$\{\chi_i\}$ are uniformly bounded \cite[Corollary 1.3]{li2000gradient}
\begin{align}\label{eq:gradientChi}
\|\nabla\chi_i\|_{L^{\infty}(\omega_i)}\leq C_0,
\end{align}
where the constant $C_0$ depends on $D$, the size and shape of $D_j$ for $j=1,\cdots,m$,
the space dimension $d$ and the coefficient $\kappa$, but it
is independent of the distances between the inclusions $D_k$ and $D_j$ for $k,j=1,\cdots, m$.
It is worth noting that the precise dependence of the constant $C_0$ on $\kappa$ is still
unknown. However, when the contrast $\Lambda=\infty$, it is
known that the constant $C_0$ will blow up as two inclusions approach each other, for
which the problem reduces to the perfect or insulated conductivity problem
\cite{bao2010gradient}. Such extreme cases are beyond the scope of the present work.
The constant $C_0$ also depends on coarse grid size $H$ with a possible scaling $H^{-1}$.
\end{remark}

Since the set of functions $\{\chi_i\}_{i=1}^{N}$ form partition of unity functions subordinated
to $\{\omega_i\}_{i=1}^{N}$, we can construct global
multiscale basis functions from the local multiscale basis functions discussed in Section \ref{locbasis}
\cite{melenk1996partition,EFENDIEV2011937}. Specifically, the global multiscale spaces $V_{\text{off}}^{\text{S}}$,
$V_{\text{snap}}$ and $V_{\text{off}} ^{\text{H}}$ are respectively defined by
\begin{equation}\label{eq:globalBasis}
\begin{aligned}
V_{\text{off}}^{\text{S}}  &:= \text{span} \{ \chi_i v_j^{\si},\chi_i v_{k}^{\ti},\chi_i v^{i}: \,  \, 1 \leq i \leq N,\,\,\, 1 \leq j \leq \ell_i^{\roma} \text{ and } 1 \leq k \leq \ell_i^{\romb} \text{ with }\ell_i^{\roma}+\ell_i^{\romb}=\ell_i-1\},
\\
V_{\text{snap}} &:= \text{span}\{ \chi_i\phi_{j}^{ \hi}:~~~ 1\leq i\leq N \text{ and }1\leq j \leq {L_i} \},\\
V_{\text{off}} ^{\text{H}} &:= \text{span} \{ \chi_i v_j^{\hi}: \,  \, 1 \leq i \leq N \, \, \,  \text{and} \, \, \, 1 \leq j \leq \ell_i  \}.
\end{aligned}
\end{equation}
Accordingly, the Galerkin approximations to Problem \eqref{eqn:pde} read respectively:
seeking $u_{\text{off}}^{\text{S}}\in V_{\text{off}}^{\text{S}}$, $u_{\text{snap}}\in V_{\text{snap}}$ and
$u_{\text{off}}^{\text{H}}\in V_{\text{off}}^{\text{H}}$, satisfying
\begin{align}
a(u_{\text{off}}^{\text{S}}, v) &= (f, v)_{D} \quad\text{for all} \,\,\, v \in V_{\text{off}}^{\text{S}},\label{cgvarform_spectral}\\
a(u_{\text{snap}}, v) &= (f, v)_{D} \quad\text{for all} \,\,\, v \in V_{\text{snap}},\label{cgvarform_snap}\\
a(u_{\text{off}}^{\text{H}}, v) &= (f, v)_{D}  \quad \text{for all} \,\,\, v \in V_{\text{off}}^{\text{H}}.\label{cgvarform_pod}
\end{align}
Note that, by its construction, we have the inclusion relation $V_{\text{off}}^{\text{H}}\subset V_{\text{snap}}$ for all
$1\leq \ell_i\leq L_i$ with $i=1,2,\cdots, N$. Hence, the Gakerkin orthogonality property \cite[Corollary 2.5.10]{MR2373954} implies
\[
\seminormE{u-u_{\text{off}}^{\text{H}}}{D}^2= \seminormE{u-u_{\text{snap}}}{D}^2+\seminormE{u_{\text{snap}}-u_{\text{off}}^{\text{H}}}{D}^2.
\]
Furthermore, we will prove in Section \ref{sec:discretePOD} that
$u_{\text{off}}^{\text{H}}\to u_{\text{snap}}$  in $ H^1_0(D),$
and the convergence rate is determined by $\max_{i=1,\cdots,N}\big\{(H^2\lambda_{\ell_i+1}^{\hi})^{-1/2}\big\}$.

The main goal of this work is to derive bounds on the errors $\seminormE{u-u_{\text{off}}^{\text{S}}}{D}$,
$\seminormE{u-u_{\text{snap}}}{D}$ and $\seminormE{u-u_{\text{off}}^{\text{H}}}{D}$. This
will be carried out in Section \ref{sec:error} below.

\section{Error estimates}\label{sec:error}
This section is devoted to the energy error estimates for the multiscale approximations.
The general strategy is as follows. First, we derive approximation properties to
the local solution $u|_{\omega_i}$, for the local multiscale spaces $V_{\text{off}}^{\text{S}_i,
\ell_i^{\roma}}$, $V_{\text{off}}^{\text{T}_i,\ell_i^{\romb}}$, $V_{\text{snap}}^{\text{H}_i}$
and $V_{\text{off}}^{\text{H}_i,\ell_i}$. Then we combine these local estimates together
with partition of unity functions to establish the desired global energy error estimates.

\subsection{Spectral bases approximate error}\label{subsec:spectral}
Note that the solution $u$ satisfies the following equation
\begin{equation*}
\left\{
\begin{aligned}
-\nabla\cdot(\kappa\nabla u)&=f \quad&&\text{ in } \omega_i,\\
-\kappa\frac{\partial u}{\partial n}&=-\kappa\frac{\partial u}{\partial n}\quad&&\text{ on }\partial \omega_i,
\end{aligned}
\right.
\end{equation*}
which can be split into three parts, namely
\begin{align}\label{eq:decomp}
u|_{\omega_i}=u^{i,\roma}+u^{i,\romb}+u^{i,\romc}.
\end{align}
Here, the three components $u^{i,\roma}$, $u^{i,\romb}$, and $u^{i,\romc}$ are respectively given by
\begin{equation}\label{eq:u-roma1}
\left\{
\begin{aligned}
-\nabla\cdot(\kappa\nabla u^{i,\roma})&=f-\bar{f}_i \quad&&\text{ in } \omega_i\\
-\kappa\frac{\partial u^{i,\roma}}{\partial n}&=0\quad&&\text{ on }\partial \omega_i,
\end{aligned}
\right.
\end{equation}
where $\bar{f}_i:=\int_{\omega_i}f\dx\times\frac{\widetilde{\kappa}}{\int_{\omega_i}\widetilde{\kappa}\dx}$,
\begin{equation*}
\left\{
\begin{aligned}
-\nabla\cdot(\kappa\nabla u^{i,\romb})&=0 \quad&&\text{ in } \omega_i\\
-\kappa\frac{\partial u^{i,\romb}}{\partial n}&=\kappa\frac{\partial u}{\partial n}-\dashint_{\partial\omega_i}\kappa\frac{\partial u}{\partial n}\quad&&\text{ on }\partial \omega_i,
\end{aligned}
\right.
\end{equation*}
and
\[
u^{i,\romc}=v^{i}\int_{\omega_i}f\dx
\]
with $v^i$ being defined in \eqref{eq:1-basis}. Clearly,
$u^{i,\romc}$ involves only one local solver.
We begin with an {\em a priori} estimate on $u^{i,\romb}$.
\begin{lemma}The following a priori estimate holds:
\begin{align}\label{eq:u2-bound}
\seminormE{u^{i,\romb}}{\omega_i}\leq \seminormE{u}{\omega_i}+H\Cpoin{\omega_i}^{1/2}\|f\|_{L^2_{\kappa^{-1}}(\omega_i)}.
\end{align}
\end{lemma}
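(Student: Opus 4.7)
Because $\seminormE{\cdot}{\omega_i}$ is invariant under the addition of a constant and the decomposition $u|_{\omega_i}=u^{i,\roma}+u^{i,\romb}+u^{i,\romc}$ of \eqref{eq:decomp} holds up to such a constant, I would begin with the triangle inequality
\[
\seminormE{u^{i,\romb}}{\omega_i}=\seminormE{u-(u^{i,\roma}+u^{i,\romc})}{\omega_i}\leq \seminormE{u}{\omega_i}+\seminormE{V}{\omega_i},
\]
where $V:=u^{i,\roma}+u^{i,\romc}$. The task then reduces to showing $\seminormE{V}{\omega_i}\leq H\Cpoin{\omega_i}^{1/2}\normLi{f}{\omega_i}$.

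Summing the defining problem \eqref{eq:u-roma1} for $u^{i,\roma}$ with the one for $u^{i,\romc}=v^i\int_{\omega_i}f\,\dx$ coming from \eqref{eq:1-basis}, the function $V$ satisfies the Neumann problem $-\nabla\cdot(\kappa\nabla V)=f$ in $\omega_i$ with the \emph{constant} Neumann datum $-\kappa\frac{\partial V}{\partial n}=|\partial\omega_i|^{-1}\int_{\omega_i}f\,\dx$ on $\partial\omega_i$ (the compatibility condition holds automatically). Since $V$ is unique only up to an additive constant and the seminorm is unaffected by this freedom, I would normalize $V$ so that $\int_{\partial\omega_i}V\,dS=0$. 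Then testing the weak form with $V$ itself causes the boundary contribution $|\partial\omega_i|^{-1}\int_{\omega_i}f\,\dx\cdot\int_{\partial\omega_i}V\,dS$ to vanish, leaving
\[
\seminormE{V}{\omega_i}^2=\int_{\omega_i}fV\,\dx\leq \normLi{f}{\omega_i}\Big(\int_{\omega_i}\kappa V^2\,\dx\Big)^{1/2}
\]
by Cauchy--Schwarz.

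The final ingredient is the weighted Poincar\'e inequality
\[
\int_{\omega_i}\kappa v^2\,\dx\leq H^2\Cpoin{\omega_i}\int_{\omega_i}\kappa|\nabla v|^2\,\dx\qquad\text{for all }v\in H^1_\kappa(\omega_i)\text{ with }\int_{\partial\omega_i}v\,dS=0,
\]
which is the zero-boundary-mean analogue of the $H^1_0$ version stated in Theorem~\ref{thm:friedrichs}. Under Assumption~\ref{ass:coeff} the $\Gamma$-quasi-monotonicity of $\kappa$ furnishes such an estimate with a contrast-independent constant via the framework of \cite{pechstein2012weighted}. Applying it to $V$ and dividing the Cauchy--Schwarz bound above by $\seminormE{V}{\omega_i}$ yields the desired bound, and combining with the opening triangle inequality closes the proof. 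The main obstacle is precisely this Poincar\'e step: Theorem~\ref{thm:friedrichs} as stated covers only the $H^1_0(\omega_i)$-constraint, so using the same constant $\Cpoin{\omega_i}$ for the zero-boundary-mean variant requires an explicit appeal to \cite{pechstein2012weighted} or else a different but equivalent normalization of $V$ (for instance a $\widetilde\kappa$-weighted interior mean-zero condition) with matching constants.
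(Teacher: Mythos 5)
Your proof is correct and follows essentially the same route as the paper's: the paper likewise sets $\widetilde u:=u^{i,\roma}+u^{i,\romc}$, observes that it solves a Neumann problem with constant boundary flux, normalizes $\int_{\partial\omega_i}\widetilde u\,\mathrm{d}s=0$ so the boundary term drops when testing with $\widetilde u$, and concludes via the weighted Poincar\'e inequality, H\"older's inequality and the triangle inequality. Your closing caveat is well taken --- the paper cites \eqref{eq:poinConstant} directly even though that constant is defined only over $H^1_0(\omega_i)$, so the explicit appeal to the zero-boundary-mean variant from \cite{pechstein2012weighted} that you supply is precisely the justification the paper leaves implicit.
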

\begin{proof}
Let $\widetilde{u}:=u^{i,\roma}+u^{i,\romc}$. Then it satisfies
\begin{equation*}\label{eq:u-roma}
\left\{
\begin{aligned}
-\nabla\cdot(\kappa\nabla \widetilde{u})&=f \quad&&\text{ in } \omega_i,\\
\kappa\frac{\partial \widetilde{u}}{\partial n}&=\frac{1}{{|\partial\omega_i|}}\int_{\omega_i}f\;\dx\quad&&\text{ on }\partial \omega_i.
\end{aligned}
\right.
\end{equation*}
To make the solution unique, we require $\int_{\partial\omega_i}\widetilde{u}\;{\rm d}s=0$.
Testing the first equation with $\widetilde{u}$ gives
\begin{align*}
\seminormE{\widetilde{u}}{\omega_i}^2=\int_{\omega_i}f\widetilde{u}\;\dx.
\end{align*}
Now Poincar\'{e} inequality \eqref{eq:poinConstant} and H\"{o}lder's inequality lead to
\begin{align*}
\seminormE{\widetilde{u}}{\omega_i}^2\leq \|f\|_{L^2_{\kappa^{-1}}(\omega_i)}\|\widetilde{u}\|_{L^2_{\kappa}(\omega_i)}
\leq H\Cpoin{\omega_i}^{1/2}\|f\|_{L^2_{\kappa^{-1}}(\omega_i)}\seminormE{\widetilde{u}}{\omega_i}.
\end{align*}
Therefore, we obtain
\begin{align*}
\seminormE{\widetilde{u}}{\omega_i}\leq H\Cpoin{\omega_i}^{1/2}\|f\|_{L^2_{\kappa^{-1}}(\omega_i)}.
\end{align*}
Finally, the desired result follows from the triangle inequality.
\end{proof}

Since $u^{i,\roma}\in L^{2}_{\widetilde{\kappa}}(\omega_i),$
$u^{i,\romb}\in L^{2}(\partial\omega_i)$,
and the series $\{v_j^{\si}\}_{j=1}^{\infty}\oplus \{1\}$ and $\{v_j^{\ti}\}_{j=1}^{\infty}$ form a complete orthogonal bases in $L^{2}_{\widetilde{\kappa}}(\omega_i)$ and $L^{2}(\partial\omega_i)$, respectively, $u^{i,\roma}$ and $u^{i,\romb}$ admit the following decompositions:
\begin{align}
u^{i,\roma}&=c_0(u^{i,\roma},1)_i+\sum\limits_{j=1}^{\infty}(u^{i,\roma},v_j^{\si})_i v_j^{\si},\label{eq:spectralU}\\
u^{i,\romb}&=\sum\limits_{j=1}^{\infty}(u^{i,\rm II},v_j^{\ti})_{\partial\omega_i} v_j^{\ti}.
\label{eq:spectralu2}
\end{align}
For any $n\in \mathbb{N}_{+}$, we employ the $n$-term truncation $u^{i,\roma}_n$ and $u^{i,\romb}_n$ to approximate $u^{i,\roma}$ and $u^{i,\romb}$, respectively,
on $\omega_i$:
\begin{align*}
u^{i,\roma}_n:=\mathcal{P}^{\si,n}u^{i,\roma}\in V_{\text{off}}^{\text{S}_i,n}
\quad \mbox{and}\quad
u^{i,\romb}_n:=\mathcal{P}^{\ti,n}u^{i,\romb}\in V_{\text{off}}^{\text{T}_i,n}.
\end{align*}
\begin{lemma}\label{lem:assF}
Assume that $f\in L^2_{\widetilde{\kappa}^{-1}}(D)$. Then there holds
\begin{align}\label{eq:f_norm}
\|f-\bar{f}_i\|_{L^{2}_{\widetilde{\kappa}^{-1}}(\omega_i)}^2
=\sum\limits_{j=1}^{\infty}\Big(\lambda_j^{\text{S}_i}\Big)^2 \Big|(u^{i,\roma},v_{j}^{\si})_i\Big|^2<\infty.
\end{align}
\end{lemma}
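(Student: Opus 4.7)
\medskip

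\noindent\textbf{Proof plan for Lemma \ref{lem:assF}.}

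The plan is to expand $f-\bar f_i$ with respect to the complete orthogonal basis $\{\widetilde\kappa v_j^{\si}\}_{j=1}^\infty\oplus\{\widetilde\kappa\}$ of $L^2_{\widetilde\kappa^{-1}}(\omega_i)$ provided by Lemma \ref{lem:L2Inv}, compute the Fourier coefficients via integration by parts against the PDE \eqref{eq:u-roma1}, and then apply Parseval's identity.

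First I would verify membership: since $f\in L^2_{\widetilde\kappa^{-1}}(D)$ by hypothesis and $\bar f_i=\big(\int_{\omega_i}f\,\dx\big)\widetilde\kappa/\int_{\omega_i}\widetilde\kappa\,\dx$ is a scalar multiple of $\widetilde\kappa$ (whose $L^2_{\widetilde\kappa^{-1}}$-norm squared is $\int_{\omega_i}\widetilde\kappa\,\dx<\infty$ by \eqref{eq:upper_tilde}), we have $f-\bar f_i\in L^2_{\widetilde\kappa^{-1}}(\omega_i)$. Moreover, the choice of $\bar f_i$ is exactly what makes the component of $f-\bar f_i$ along $\widetilde\kappa$ vanish:
\[
(f-\bar f_i,\widetilde\kappa)_{L^2_{\widetilde\kappa^{-1}}(\omega_i)}=\int_{\omega_i}(f-\bar f_i)\,\dx=0.
\]

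Next I would compute the coefficient of $f-\bar f_i$ along $\widetilde\kappa v_j^{\si}$. Using $\|\widetilde\kappa v_j^{\si}\|_{L^2_{\widetilde\kappa^{-1}}(\omega_i)}^2=\int_{\omega_i}\widetilde\kappa (v_j^{\si})^2\,\dx=(v_j^{\si},v_j^{\si})_i=1$ (by normalization in $W_i$), the $j$-th Fourier coefficient is simply $\int_{\omega_i}(f-\bar f_i)v_j^{\si}\,\dx$. To evaluate this, I would integrate by parts twice: using the PDE for $u^{i,\roma}$ in \eqref{eq:u-roma1} together with the Neumann condition $\kappa\partial_n u^{i,\roma}=0$ gives $\int_{\omega_i}(f-\bar f_i)v_j^{\si}\,\dx=\int_{\omega_i}\kappa\nabla u^{i,\roma}\cdot\nabla v_j^{\si}\,\dx$, and integrating by parts again using the eigenvalue equation \eqref{eq:spectral} and the Neumann condition $\kappa\partial_n v_j^{\si}=0$ yields $\lambda_j^{\si}\int_{\omega_i}\widetilde\kappa\, u^{i,\roma} v_j^{\si}\,\dx=\lambda_j^{\si}(u^{i,\roma},v_j^{\si})_i$.

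Finally I would invoke Parseval's identity relative to the complete orthogonal basis from Lemma \ref{lem:L2Inv}: since the $\widetilde\kappa$-component vanishes,
\[
\|f-\bar f_i\|_{L^2_{\widetilde\kappa^{-1}}(\omega_i)}^2=\sum_{j=1}^{\infty}\bigl(\lambda_j^{\si}(u^{i,\roma},v_j^{\si})_i\bigr)^2\,\|\widetilde\kappa v_j^{\si}\|_{L^2_{\widetilde\kappa^{-1}}(\omega_i)}^{2}=\sum_{j=1}^{\infty}(\lambda_j^{\si})^2|(u^{i,\roma},v_j^{\si})_i|^2,
\]
and the finiteness of this sum is exactly the already-established $L^2_{\widetilde\kappa^{-1}}$-integrability of $f-\bar f_i$.

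The main technical point is the double integration by parts: I have to be careful that both boundary integrals vanish because of the matching Neumann conditions satisfied by $u^{i,\roma}$ and by the spectral basis $v_j^{\si}$ (both come from operators with natural Neumann trace). A secondary subtlety is bookkeeping around the $\widetilde\kappa$-component of the expansion; this is handled cleanly by observing that $\bar f_i$ was defined precisely so that $f-\bar f_i$ is orthogonal to $\widetilde\kappa$ in $L^2_{\widetilde\kappa^{-1}}$, so only the $\widetilde\kappa v_j^{\si}$ terms contribute to Parseval's identity.
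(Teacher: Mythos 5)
Your proposal is correct and follows essentially the same route as the paper: expand $f-\bar f_i$ in the complete orthogonal basis $\{\widetilde\kappa v_j^{\si}\}_{j=1}^\infty\oplus\{\widetilde\kappa\}$ from Lemma \ref{lem:L2Inv}, observe that the $\widetilde\kappa$-component vanishes by the definition of $\bar f_i$, identify the remaining coefficients as $\lambda_j^{\si}(u^{i,\roma},v_j^{\si})_i$ by testing \eqref{eq:u-roma1} with $v_j^{\si}$ and using the eigenvalue equation, and conclude by Parseval. The extra care you take with the membership check and the vanishing boundary terms is consistent with what the paper leaves implicit.
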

\begin{proof}
Since $f\in L^2_{\widetilde{\kappa}^{-1}}(D)$, by Lemma \ref{lem:L2Inv}, $f-\bar f_i$ admits the following spectral decomposition:
\begin{align}\label{eq:99}
f-\bar{f}_i=\Big(\int_{\omega_i}\widetilde{\kappa}\dx\Big)^{-1}
\Big(\int_{\omega_i}
(f-\bar{f}_i)\;\dx\Big)\widetilde{\kappa}
+\sum\limits_{j=1}^{\infty}
\Big(\int_{\omega_i}(f-\bar{f}_i)
v_j^{\si}\dx\Big)\widetilde{\kappa}v_j^{\si}.
\end{align}
By the definition of $\bar f_i$, the first term vanishes.
Thus, it suffices to compute the $j^{\text{th}}$ expansion coefficient
$\int_{\omega_i}(f-\bar{f}_i)v_j^{\si}\dx$ for $j=1,2,\cdots$, which follows from \eqref{eq:u-roma1}.
Indeed, testing \eqref{eq:u-roma1} with $v_j^{\si}$ yields
\begin{align*}
\int_{\omega_i}\Big(f-\bar{f}_i\Big)v_j^{\si}\dx
&=\int_{\omega_i}\kappa\nabla u^{i,\roma}\cdot\nabla v_j^{\si}\dx=\lambda_{j}^{\si}\int_{\omega_i}\widetilde{\kappa}u^{i,\roma} v_j^{\si}\dx
=\lambda_{j}^{\si}(u^{i,\roma}, v_j^{\si})_i.
\end{align*}
\end{proof}

Now we state an important approximation property of the operator $\mc{P}^{\si,\ell_i^{\roma}}$ of rank $\ell_i^{\roma}$ defined in \eqref{eq:FR_spec}.
\begin{proposition}\label{prop:projection}
Assume that $f\in L^2_{\widetilde{\kappa}^{-1}}(D)$ and $\ell_i^{\roma}\in \mathbb{N}_+$. Let
$u^{i,\roma}$ be the first component in \eqref{eq:decomp}. Then the projection
$\mc{P}^{\si,\ell_i^{\roma}}: L^2_{\widetilde{\kappa}}(\omega_i)\to V_{{\rm off}}^{\mathrm{S}_i,\ell_i^{\roma}}$ of rank
$\ell_i^{\roma}$ defined in \eqref{eq:FR_spec} has the following approximation properties:
\begin{align}
\normLT{u^{i,\roma}-\mc{P}^{\si,\ell_i^{\roma}}u^{i,\roma}}{\omega_i}&
\leq (\lambda_{\ell_i^{\roma}}^{\si})^{-1}\normLii{f}{\omega_i},\label{eq:3333}\\
\seminormE{u^{i,\roma}-\mc{P}^{\si,\ell_i^{\roma}}u^{i,\roma}}{\omega_i}&\leq  ({\lambda_{\ell_i^{\roma}}^{\si}})^{-\frac12}\normLii{f}{\omega_i}. \label{eq:4444}
\end{align}
\end{proposition}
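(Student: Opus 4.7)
The plan is to exploit the spectral decomposition of $u^{i,\roma}$ together with Lemma \ref{lem:assF} in a direct manner. Both inequalities follow from expressing the projection error as a tail sum and bounding each term using the monotonicity of the eigenvalues $\{\lambda_j^{\si}\}$.

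First, I would combine the expansion \eqref{eq:spectralU} of $u^{i,\roma}$ with the definition \eqref{eq:FR_spec} of the projection $\mathcal{P}^{\si,\ell_i^{\roma}}$ to obtain the representation
\[
u^{i,\roma}-\mathcal{P}^{\si,\ell_i^{\roma}}u^{i,\roma}=\sum_{j=\ell_i^{\roma}}^{\infty}(u^{i,\roma},v_j^{\si})_i\, v_j^{\si}.
\]
Recalling that $\{v_j^{\si}\}_{j=1}^{\infty}$ is a complete orthonormal system in $W_i$ with respect to $(\cdot,\cdot)_i$, Parseval's identity gives
\[
\|u^{i,\roma}-\mathcal{P}^{\si,\ell_i^{\roma}}u^{i,\roma}\|_{L^2_{\widetilde{\kappa}}(\omega_i)}^2=\sum_{j=\ell_i^{\roma}}^{\infty}\bigl|(u^{i,\roma},v_j^{\si})_i\bigr|^2.
\]
Using the monotonicity $\lambda_{\ell_i^{\roma}}^{\si}\le \lambda_j^{\si}$ for $j\ge \ell_i^{\roma}$ to insert the factor $(\lambda_j^{\si})^2/(\lambda_j^{\si})^2$ and then applying Lemma \ref{lem:assF}, together with $\|f-\bar f_i\|_{L^2_{\widetilde{\kappa}^{-1}}(\omega_i)}\le\|f\|_{L^2_{\widetilde{\kappa}^{-1}}(\omega_i)}$ (which holds since $\bar f_i$ is the $L^2_{\widetilde{\kappa}^{-1}}$-projection of $f$ onto $\mathrm{span}\{\widetilde\kappa\}$, cf.\ Lemma \ref{lem:L2Inv} and \eqref{eq:99}), yields \eqref{eq:3333}.

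For the energy bound \eqref{eq:4444}, the key auxiliary fact is the $H^1_{\kappa}$-orthogonality of the eigenfunctions: testing the weak form of the eigenvalue problem \eqref{eq:spectral} with $v_k^{\si}$ gives $\int_{\omega_i}\kappa\nabla v_j^{\si}\cdot\nabla v_k^{\si}\,\dx=\lambda_j^{\si}\delta_{jk}$. Hence
\[
\seminormE{u^{i,\roma}-\mathcal{P}^{\si,\ell_i^{\roma}}u^{i,\roma}}{\omega_i}^{2}=\sum_{j=\ell_i^{\roma}}^{\infty}\lambda_j^{\si}\bigl|(u^{i,\roma},v_j^{\si})_i\bigr|^2.
\]
Writing $\lambda_j^{\si}=(\lambda_j^{\si})^2/\lambda_j^{\si}\le (\lambda_j^{\si})^2/\lambda_{\ell_i^{\roma}}^{\si}$ for $j\ge \ell_i^{\roma}$ and invoking Lemma \ref{lem:assF} once more produces \eqref{eq:4444}.

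There is no real obstacle here: the entire argument is driven by Lemma \ref{lem:assF}, which is where the nontrivial information sits (it converts the $L^2_{\widetilde{\kappa}^{-1}}$-norm of the source into a weighted $\ell^2$ sum of spectral coefficients of $u^{i,\roma}$). The only point that requires care is the verification that $\bar f_i$ can be dropped in the right-hand side without cost; this follows from the orthogonal decomposition of $L^2_{\widetilde{\kappa}^{-1}}(\omega_i)$ provided by Lemma \ref{lem:L2Inv}, whose $\widetilde{\kappa}$-component is precisely $\bar f_i$. Once that is noted, both inequalities are immediate consequences of the spectral calculus for the compact self-adjoint operator $\widetilde{\mathcal{S}}_i$ constructed in the proof of Theorem \ref{lem:eigenvalue-blowup}.
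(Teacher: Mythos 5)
Your proposal is correct and follows essentially the same route as the paper: expand the error as the spectral tail starting at $j=\ell_i^{\roma}$, insert the factor $(\lambda_j^{\si})^{2}/(\lambda_j^{\si})^{2}$, invoke Lemma \ref{lem:assF}, and drop $\bar f_i$ via the orthogonality supplied by Lemma \ref{lem:L2Inv} and \eqref{eq:99}. The only difference is that you spell out the energy estimate \eqref{eq:4444} (via the $H^1_\kappa$-orthogonality $\int_{\omega_i}\kappa\nabla v_j^{\si}\cdot\nabla v_k^{\si}\,\dx=\lambda_j^{\si}\delta_{jk}$), which the paper leaves as ``derived in a similar manner''; your filled-in details are the intended ones.
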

\begin{proof}
The definitions \eqref{eq:spectralU} and \eqref{eq:FR_spec}, and the orthonormality of $\{v_j^{\si}\}_{j=1}^{\infty}\oplus\{1\}$
in $L^2_{\widetilde{\kappa}}(\omega_i)$ directly yield
\begin{align*}
\normLT{u^{i,\roma}-\mc{P}^{\si,\ell_i^{\roma}}u^{i,\roma}}{\omega_i}^2
&=\sum\limits_{j=\ell_i^{\roma}}^{\infty}(u^{i,\roma},v_j^{\si})_i^2
=\sum\limits_{j=\ell_i^{\roma}}^{\infty}(\lambda_j^{\si})^{-2}
(\lambda_j^{\si})^{2}(u^{i,\roma},v_j^{\si})_i^2\\
&\leq (\lambda_{\ell_i^{\roma}}^{\si})^{-2}\sum\limits_{j=\ell_i^{\roma}}^{\infty}
(\lambda_j^{\si})^{2}(u^{i,\roma},v_j^{\si})_i^2\\
&\leq (\lambda_{\ell_i^{\roma}}^{\si})^{-2}\normLii{f-\bar{f}_i}{\omega_i}^2,
\end{align*}
where in the last step we have used \eqref{eq:f_norm}.
Next, since the first term in the expansion
\eqref{eq:99} vanishes, we deduce that $f-\bar{f}_i$ is the $L^2_{\widetilde{\kappa}^{-1}}(\omega_i)$ projection
onto the codimension one subspace $L^2_{\widetilde{\kappa}^{-1}}(\omega_i)\backslash \{\widetilde{\kappa}\}$.
Thus,
\begin{align*}
\normLii{f-\bar{f}_i}{\omega_i}\leq \normLii{f}{\omega_i}.
\end{align*}
Plugging this inequality into the preceding estimate, we arrive at
\begin{align*}
\normLT{u^{i,\roma}-\mc{P}^{\si,\ell_i^{\roma}}u^{i,\roma}}{\omega_i}^2
\leq(\lambda_{\ell_i^{\roma}}^{\si})^{-2}\normLii{f}{\omega_i}^2,
\end{align*}
Taking the square root yields the first estimate. The second estimate can be derived in a similar manner.
\end{proof}

Next we give the approximation property of the finite rank operator $\mc{P}^{\ti,\ell_i^{\romb}}$ to
the second component of the local solution $u^{i,\romb}$, which relies on the regularity of
the very weak solution in the appendix.
\begin{lemma}\label{lemma:u2}
Let $\ell_i^{\roma}\in \mathbb{N}_+$ and let $u^{i,\romb}$ be the second component in \eqref{eq:decomp}. Then the projection $\mc{P}^{\ti,\ell_i^{\romb}}: L^2(\partial\omega_i)\to V_{\rm off}^{{\rm T}_i,\ell_i}$ of rank $\ell_i^{\romb}$ defined in \eqref{eq:steklov_spec} has the following approximation properties:
\begin{align}
\|{u^{i,\romb}-\mc{P}^{\ti,\ell_i^{\romb}}u^{i,\romb}}\|_{L^2(\partial\omega_i)}
&\leq  (\lambda_{\ell_i^{\romb}+1}^{\ti})^{-\frac12}
\Big(\seminormE{u}{\omega_i}+ H\sqrt{\Cpoin{\omega_i}}\|f\|_{L^2_{\kappa^{-1}}(\omega_i)}\Big),
\label{eq:5555}\\
\normLT{u^{i,\romb}-\mc{P}^{\ti,\ell_i^{\romb}}u^{i,\romb}}{\omega_i}
&\leq \Cw(\lambda_{\ell_i^{\romb}+1}^{\ti})^{-\frac12}
\Big(\seminormE{u}{\omega_i}+
H\sqrt{\Cpoin{\omega_i}}\|f\|_{L^2_{\kappa^{-1}}(\omega_i)}\Big), \label{eq:56789}\\
\int_{\omega_i}\chi_i^2\kappa |\nabla (u^{i,\romb}-\mc{P}^{\ti,\ell_i^{\romb}}u^{i,\romb}) |^2\dx
&\leq 8H^{-2}\Cw^2(\lambda_{\ell_i^{\romb}+1}^{\ti})^{-1}
\Big(\seminormE{u}{\omega_i}^2+ H^2\Cpoin{\omega_i}\|f\|_{L^2_{\kappa^{-1}}(\omega_i)}^2\Big).
\label{eq:777}
\end{align}
\end{lemma}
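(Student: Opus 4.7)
The plan is to prove the three estimates in sequence, exploiting the fact that $u^{i,\romb}$ is $\kappa$-harmonic in $\omega_i$ and that the Steklov eigenfunctions $\{v_j^{\ti}\}_{j=1}^\infty$ form a complete orthonormal basis of $L^2(\partial\omega_i)$ while being orthogonal in the energy inner product on $\omega_i$.

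For the boundary estimate \eqref{eq:5555}, I would first observe that both $u^{i,\romb}$ and $\mc{P}^{\ti,\ell_i^{\romb}}u^{i,\romb}$ are $\kappa$-harmonic in $\omega_i$, so their difference admits the tail expansion $u^{i,\romb}-\mc{P}^{\ti,\ell_i^{\romb}}u^{i,\romb}=\sum_{j>\ell_i^{\romb}}(u^{i,\romb},v_j^{\ti})_{\partial\omega_i}v_j^{\ti}$ on $\partial\omega_i$. A direct computation using Green's identity and the Steklov equation \eqref{eq:steklov} gives the orthogonality $\int_{\omega_i}\kappa\nabla v_j^{\ti}\cdot\nabla v_k^{\ti}\dx=\lambda_k^{\ti}\delta_{jk}$, so that $\seminormE{u^{i,\romb}}{\omega_i}^2=\sum_{j=1}^{\infty}\lambda_j^{\ti}(u^{i,\romb},v_j^{\ti})_{\partial\omega_i}^2$. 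Dividing the tail by $\lambda_{\ell_i^{\romb}+1}^{\ti}$ and then applying the a priori estimate \eqref{eq:u2-bound} yields \eqref{eq:5555}.

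For the weighted $L^2$ estimate \eqref{eq:56789}, I would note that the residual $e:=u^{i,\romb}-\mc{P}^{\ti,\ell_i^{\romb}}u^{i,\romb}$ is itself $\kappa$-harmonic in $\omega_i$ with boundary trace lying in $L^2(\partial\omega_i)$. The transposition method (Theorem \ref{lem:very-weak} in the appendix) supplies precisely the very-weak regularity bound
\begin{equation*}
\normLT{e}{\omega_i}\leq \Cw \|e\|_{L^2(\partial\omega_i)},
\end{equation*}
where $\Cw$ is contrast-independent. Combining this with \eqref{eq:5555} gives \eqref{eq:56789}. This is the main obstacle in the proof, and it is the reason the theorem on very-weak solutions is needed at all; without it one cannot pass from the boundary control to a weighted $L^2$ control in the interior with a contrast-robust constant.

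Finally, for the Caccioppoli-type inequality \eqref{eq:777}, I would test the equation $-\nabla\cdot(\kappa\nabla e)=0$ with the admissible multiplier $\chi_i^2 e\in H^1_{\kappa,0}(\omega_i)$ (admissible because $\chi_i$ vanishes on $\partial\omega_i$ by \eqref{eq:chi_supp}). Expanding the product rule produces
\begin{equation*}
\int_{\omega_i}\chi_i^2\kappa|\nabla e|^2\dx=-2\int_{\omega_i}\chi_i e\, \kappa \nabla e\cdot\nabla\chi_i\dx,
\end{equation*}
to which I would apply Cauchy--Schwarz and absorb $(\int \chi_i^2\kappa|\nabla e|^2)^{1/2}$ into the left-hand side, yielding $\int\chi_i^2\kappa|\nabla e|^2\dx \leq 4\int_{\omega_i}e^2\kappa|\nabla\chi_i|^2\dx$. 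The definition \eqref{defn:tildeKappa} of $\widetilde{\kappa}$ implies the pointwise bound $\kappa|\nabla\chi_i|^2\leq H^{-2}\widetilde{\kappa}$, so the right-hand side is bounded by $4H^{-2}\normLT{e}{\omega_i}^2$. Substituting \eqref{eq:56789} and using $(a+b)^2\leq 2(a^2+b^2)$ gives the constant $8$ in \eqref{eq:777}.
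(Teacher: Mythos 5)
Your proposal is correct and follows essentially the same route as the paper: the tail expansion in the Steklov basis combined with the \emph{a priori} bound \eqref{eq:u2-bound} for \eqref{eq:5555}, the very-weak (transposition) estimate of Theorem \ref{lem:very-weak} applied to the $\kappa$-harmonic residual for \eqref{eq:56789}, and the Caccioppoli argument with the test function $\chi_i^2 e$ and the pointwise bound $\kappa|\nabla\chi_i|^2\leq H^{-2}\widetilde{\kappa}$ for \eqref{eq:777}. You also correctly identify the transposition step as the crux where contrast-independence of the constant must be secured.
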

\begin{proof}
The inequality \eqref{eq:5555} follows from the expansion \eqref{eq:spectralu2},  \eqref{eq:steklov_spec}
and \eqref{eq:u2-bound}, and the fact that $u^{i,\romb}\in H^1_{\kappa}(\omega_i)$.
Indeed, we obtain from \eqref{eq:spectralu2} and the orthonomality of $\{v_j^{\ti}\}_{j=1}^{\infty}$ in $L^2(\partial\omega_i)$ that
\begin{align*}
\|{u^{i,\romb}-\mc{P}^{\ti,\ell_i^{\romb}}u^{i,\romb}}\|_{L^2(\partial\omega_i)}^2
&=\sum\limits_{j>\ell_i^{\romb}}|(u^{i,\romb},v_j^{\ti})_{\partial\omega_i}|^2
=\sum\limits_{j>\ell_i^{\romb}}(\lambda_j^{\ti})^{-1}\lambda_j^{\ti}|(u^{i,\romb},v_j^{\ti})_{\partial\omega_i}|^2\\
&\leq (\lambda_{\ell_i^{\romb}+1}^{\ti})^{-1}\sum\limits_{j>\ell_i^{\romb}}\lambda_j^{\ti}|(u^{i,\romb},v_j^{\ti})_{\partial\omega_i}|^2.
\end{align*}
Then the estimate \eqref{eq:5555} follows from \eqref{eq:u2-bound} and the identity
$
\langle u^{i,\romb},u^{i,\romb} \rangle_i=\sum_{j=1}^{\infty}\lambda_j^{\ti}|(u^{i,\romb},v_j^{\ti})_{\partial\omega_i}|^2.
$
To prove \eqref{eq:56789}, we first write the local error equation for
$e:=u^{i,\romb}-\mc{P}^{\ti,\ell_i^{\romb}}u^{i,\romb}$ by
\begin{equation}\label{eq:222}
\left\{
\begin{aligned}
-\nabla\cdot(\kappa\nabla e)&=0 \quad&&\text{ in } \omega_i,\\
e&=u^{i,\romb}-\mc{P}^{\ti,\ell_i^{\romb}}u^{i,\romb}\quad&&\text{ on }\partial \omega_i.
\end{aligned}
\right.
\end{equation}
Now Theorem \ref{lem:very-weak} yields
\begin{align*}
\normLT{u^{i,\romb}-\mc{P}^{\ti,\ell_i^{\romb}}u^{i,\romb}}{\omega_i}\leq \text{C}_{\text{weak}}\|{u^{i,\romb}-\mc{P}^{\ti,\ell_i^{\romb}}u^{i,\romb}}\|_{L^2(\partial\omega_i)}
\end{align*}
for some constant $\text{C}_{\text{weak}}$ independent of the coefficient $\kappa$. This, together with \eqref{eq:5555}, proves \eqref{eq:56789}.

To derive the energy error estimate from the $L^2_{\widetilde{\kappa}}(\omega_i)$ error estimate, we employ a Cacciopoli type inequality.
Note that $\chi_i=0$ on the boundary $\partial \omega_i$, cf \eqref{eq:chi_supp}.
Multiplying the first equation in \eqref{eq:222} with $\chi_i^2 e_n$ and then integrating over $\omega_i$ and integration by parts lead to
\begin{align*}
\int_{\omega_i}\chi_i^2\kappa|\nabla e_n|^2\dx=-2\int_{\omega_i}\kappa \nabla e_n\cdot \nabla \chi_i\chi_i e_n\;\dx.
\end{align*}
Together with H\"{o}lder's inequality and Young's inequality, we arrive at
\begin{align*}
\int_{\omega_i}\chi_i^2\kappa|\nabla e_n|^2\dx\leq 4\int_{\omega_i}\kappa|\nabla\chi_i|^2 e_n^2\,\dx.
\end{align*}
Further, the definition of $\widetilde{\kappa}$ in \eqref{defn:tildeKappa} yields
\begin{align*}
\int_{\omega_i}\chi_i^2\kappa|\nabla e_n|^2\dx\leq 4H^{-2}\int_{\omega_i}\widetilde{\kappa} e_n^2\,\dx.
\end{align*}
Now  \eqref{eq:56789} and Young's inequality yield \eqref{eq:777}.
This completes the proof of the lemma.
\end{proof}
\begin{remark}
It is worth emphasizing that the local energy estimates \eqref{eq:4444} and \eqref{eq:777} are derived under almost no restrictive
assumptions besides the mild condition $f\in L^2_{\widetilde{\kappa}^{-1}}(D)$. This estimate is new to the best of
our knowledge. The authors \cite{EFENDIEV2011937} utilized the Cacciopoli inequality to derive similar
estimates, which, however, incurs some (implicit) assumptions on the problem. Hence, the estimates \eqref{eq:4444} and \eqref{eq:777}
are important for justifying the local spectral approach.
\end{remark}
Finally, we present the rank-$\ell_i$ approximation to $u|_{\omega_i}$, where $\ell_i:=\ell_i^{\roma}+\ell_i^{\romb}+1$ with $\ell_i^{\roma}, \ell_i^{\romb}\in \mathbb{N}$ for all $i=1,2,\cdots, N$:
\begin{align}\label{eq:spectral-finiteRank}
\widetilde{u}_i:=\mc{P}^{\si,\ell_i^{\roma}}u^{i,\roma}_i+\mc{P}^{\ti,\ell_i^{\romb}}u^{i,\romb}_i+u^{i,\romc}.
\end{align}

Now, we  present an error estimate for the Galerkin approximation $u_{\text{off}}^{\text{S}}$ based on
the local spectral basis, cf. \eqref{cgvarform_spectral}. Our proof is inspired by the partition of unity finite
element method (FEM) \cite[Theorem 2.1]{melenk1996partition}.
\begin{lemma}\label{lem:spectralApprox}
Assume that $f\in L^2_{\widetilde{\kappa}^{-1}}(D)\cap L^2_{{\kappa}^{-1}}(D)$ and $\ell_i^{\roma}, \ell_i^{\romb}\in \mathbb{N}$ for all $i=1,2,\cdots, N$. Let $u$ be the solution to Problem \eqref{eqn:pde}. Denote $V_{{\rm off}}^{{\rm S}}\ni w_{{\rm off}}^{{\rm S}}:=\sum\limits_{i=1}^{N}\chi_i \widetilde{u}_i$. Then there holds
\begin{equation*}
\begin{aligned}
\seminormE{u-w_{{\rm off}}^{{\rm S}}}{D}
&\leq {2}C_{{\rm ov}}\max_{i=1,\cdots,N}\big\{({H\lambda_{\ell_i^{\roma}}^{\si}})^{-1}+
({\lambda_{\ell_i^{\roma}}^{\si}})^{-\frac12}
\big\}\normLii{f}{D}\\
&+7C_{{\rm ov}}\Cw{\rm C}_{{\rm poin}}\max_{i=1,\cdots,N}
\big\{(H^2\lambda_{\ell_i^{\romb}+1}^{\ti})^{-\frac12}\big\}
\|f\|_{L^2_{\kappa^{-1}}(D)},
\end{aligned}
\end{equation*}
where ${\rm C}_{\text{poin}}:={\rm diam}(D)\Cpoin{D}^{1/2}+H\max_{i=1,\cdots,N}\{\Cpoin{\omega_i}^{1/2}\}$.
\end{lemma}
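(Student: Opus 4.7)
The plan is to use the partition of unity identity $\sum_{i=1}^N\chi_i\equiv 1$ on $D$ to rewrite
\[
u - w_{\text{off}}^{\text{S}} \;=\; \sum_{i=1}^N \chi_i\,\bigl(u|_{\omega_i}-\widetilde u_i\bigr),
\]
and then exploit the decomposition $u|_{\omega_i} = u^{i,\roma}+u^{i,\romb}+u^{i,\romc}$ together with the definition \eqref{eq:spectral-finiteRank} of $\widetilde u_i$: the third part $u^{i,\romc}$ is captured exactly by $\widetilde u_i$, so the local error collapses to $u|_{\omega_i}-\widetilde u_i = e_i^{\roma}+e_i^{\romb}$ where $e_i^{\roma} := u^{i,\roma}-\mc{P}^{\si,\ell_i^{\roma}}u^{i,\roma}$ and $e_i^{\romb} := u^{i,\romb}-\mc{P}^{\ti,\ell_i^{\romb}}u^{i,\romb}$. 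The product rule then yields the four-term splitting
\[
\nabla(u-w_{\text{off}}^{\text{S}}) = \sum_{i} \nabla\chi_i\,e_i^{\roma} + \sum_i \chi_i\,\nabla e_i^{\roma} + \sum_i \nabla\chi_i\,e_i^{\romb} + \sum_i \chi_i\,\nabla e_i^{\romb},
\]
which I bound separately after a triangle inequality in the energy norm.

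For each of the four pieces I first apply the finite-overlap bound $\int_D \kappa|\sum_i a_i|^2\dx \leq \Cov \sum_i \int_{\omega_i}\kappa|a_i|^2\dx$ to localize. The two $\nabla\chi_i$ contributions are then controlled via the pointwise estimate $\kappa|\nabla\chi_i|^2 \leq H^{-2}\widetilde{\kappa}$ on $\omega_i$, which is immediate from the definition \eqref{defn:tildeKappa} of $\widetilde\kappa$; this reduces them to $H^{-2}\normLT{e_i^{\cdot}}{\omega_i}^2$, for which Proposition \ref{prop:projection} and the second inequality of Lemma \ref{lemma:u2} give the local bounds $H^{-1}(\lambda_{\ell_i^{\roma}}^{\si})^{-1}\normLii{f}{\omega_i}$ and $\Cw H^{-1}(\lambda_{\ell_i^{\romb}+1}^{\ti})^{-1/2}\bigl(\seminormE{u}{\omega_i}+H\Cpoin{\omega_i}^{1/2}\|f\|_{L^2_{\kappa^{-1}}(\omega_i)}\bigr)$. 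For the $\sum_i\chi_i\nabla e_i^{\roma}$ piece I use $0\leq\chi_i\leq 1$ and invoke the energy estimate of Proposition \ref{prop:projection}. The most delicate piece, $\sum_i\chi_i\nabla e_i^{\romb}$, is precisely what the Cacciopoli-type estimate (the third inequality of Lemma \ref{lemma:u2}) was crafted for: it retains the $\chi_i^2$ weight and already carries the necessary $H^{-2}$ factor, so the $e_i^{\romb}$ contribution scales uniformly as $(H^2\lambda_{\ell_i^{\romb}+1}^{\ti})^{-1/2}$ in both of its appearances.

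The global assembly is then routine. The overlap bound yields $\sum_i \normLii{f}{\omega_i}^2 \leq \Cov\normLii{f}{D}^2$ and $\sum_i \|f\|_{L^2_{\kappa^{-1}}(\omega_i)}^2 \leq \Cov\|f\|_{L^2_{\kappa^{-1}}(D)}^2$, which handle the $f$-dependent portions directly. The remaining term $\sum_i\seminormE{u}{\omega_i}^2 \leq \Cov\seminormE{u}{D}^2$ is absorbed through the global a priori bound $\seminormE{u}{D} \leq \text{diam}(D)\Cpoin{D}^{1/2}\|f\|_{L^2_{\kappa^{-1}}(D)}$, which follows by testing \eqref{eqn:weakform} with $u$ and applying the global weighted Friedrichs inequality \eqref{eq:poinConstantG}. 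Bundling the global Poincar\'e term with the local ones via $\sqrt{a^2+b^2}\leq|a|+|b|$ produces exactly the composite constant $\text{C}_{\text{poin}}$.

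I expect the main difficulty to be the constant bookkeeping needed to produce the explicit prefactors $2$ and $7$, since every triangle and Young inequality contributes a small multiplicative factor and some care is required to avoid accumulating unnecessary losses. The conceptually crucial point is that the same local error $e_i^{\romb}$ is controlled in two genuinely different ways---in $L^2_{\widetilde\kappa}(\omega_i)$ for the $\nabla\chi_i$ piece and in the $\chi_i^2$-weighted energy norm for the $\chi_i\nabla$ piece---and the Cacciopoli inequality in Lemma \ref{lemma:u2} is precisely what matches these two scalings, allowing the Steklov contribution to combine into a single uniform factor $(H^2\lambda_{\ell_i^{\romb}+1}^{\ti})^{-1/2}$.
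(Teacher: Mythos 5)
Your proposal is correct and follows essentially the same route as the paper's proof: the same partition-of-unity localization with $u^{i,\romc}$ captured exactly, the same pointwise bound $\kappa|\nabla\chi_i|^2\leq H^{-2}\widetilde{\kappa}$ for the gradient-of-cutoff terms, Proposition \ref{prop:projection} in both norms for the $\mathrm{S}_i$ part, the Cacciopoli estimate \eqref{eq:777} for the $\chi_i\nabla e_i^{\romb}$ part, and the global \emph{a priori} bound \eqref{eq:888} to absorb $\seminormE{u}{\omega_i}$. The only cosmetic difference is that you split into four pieces by the triangle inequality while the paper groups them pairwise with Young's inequality on squared norms, which is where the explicit prefactors $2$ and $7$ (the latter from $\sqrt{40}\leq 7$) come from.
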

\begin{proof}
Let $\locv{e}{}{}:=u-w_{\text{off}}^{\text{S}}$.
Then the property of the partition of unity of $\{\chi_i\}_{i=1}^{N}$ leads to
\[
\locv{e}{}{}=\sum\limits_{i=1}^{N}\chi_i\locv{e}{i}{} \qquad\text{ with }
\qquad\locv{e}{i}{}:=(u^{\roma}_i-\mc{P}^{\si,\ell_i^{\roma}}u^{\roma}_i)+(u^{i,\romb}_i-\mc{P}^{\ti,\ell_i^{\romb}}u^{i,\romb}_i)
:=\locv{e}{i}{\roma}+\locv{e}{i}{\romb}.
\]
Taking its squared energy norm and using the overlap condition \eqref{eq:overlap}, we arrive at
\begin{align*}
\int_{D}\kappa|\nabla \locv{e}{}{}|^2\dx&=\int_{D}\kappa|\sum\limits_{i=1}^{N}
\nabla(\chi_i\locv{e}{i}{})|^2\dx
\leq\Cov\sum\limits_{i=1}^{N}\int_{\omega_i}\kappa
|\nabla(\chi_i\locv{e}{i}{})|^2\dx.
\end{align*}
This and Young's inequality together imply
\begin{align}\label{eq:1111}
\int_{D}\kappa|\nabla \locv{e}{}{}|^2\dx
\leq 2\Cov\sum\limits_{i=1}^{N}
\Big(\int_{\omega_i}\kappa
|\nabla(\chi_i\locv{e}{i}{\roma})|^2\dx+\int_{\omega_i}\kappa
|\nabla(\chi_i\locv{e}{i}{\romb})|^2\dx\Big).
\end{align}
It remains to estimate the two integral terms in the bracket. By Cauchy-Schwarz inequality and
the definition \eqref{defn:tildeKappa} of $\tilde \kappa$, we obtain
\begin{align}
\int_{\omega_i}\kappa
|\nabla(\chi_i\locv{e}{i}{\roma})|^2\dx&\leq 2\Big( \int_{\omega_i}\big(\kappa\sum\limits_{j=1}^{N}
|\nabla\chi_j|^2\big)|\locv{e}{i}{\roma}|^2\dx+\int_{\omega_i}\kappa\chi_i^2
|\nabla\locv{e}{i}{\roma}|^2\dx\Big)\nonumber\\
&\leq 2\Big( H^{-2}\int_{\omega_i}\widetilde{\kappa}|\locv{e}{i}{\roma}|^2\dx+\int_{\omega_i}\chi_i^2\kappa
|\nabla\locv{e}{i}{\roma}|^2\dx\Big).\label{eq:444}
\end{align}
Then Proposition \ref{prop:projection} yields
\begin{align*}
\int_{\omega_i}\kappa
|\nabla(\chi_i\locv{e}{i}{\roma})|^2\dx
\leq 2\Big((H\lambda_{\ell_i^{\roma}}^{\si})^{-2}
+(\lambda_{\ell_i^{\roma}}^{\si})^{-1}\Big)\normLii{f}{\omega_i}^2.
\end{align*}
Analogously, we can derive the following upper bound for the second term:
\begin{align*}
\int_{\omega_i}\kappa
|\nabla(\chi_i\locv{e}{i}{\romb})|^2\dx
\leq 20\Cw^2(H^2\lambda_{\ell_i^{\romb}+1}^{\ti})^{-1}
\Big(\seminormE{u}{\omega_i}^2+H^2\Cpoin{\omega_i}\|f\|_{L^2_{\kappa^{-1}}(\omega_i)}^2\Big).
\end{align*}
Inserting these two estimate into \eqref{eq:1111} gives
\begin{align*}
\int_{D}\kappa|\nabla \locv{e}{}{}|^2\dx&\leq 4C_{\text{ov}}\sum\limits_{i=1}^{N}\Big((H\lambda_{\ell_i}^{\si})^{-2}
+(\lambda_{\ell_i^{\roma}}^{\si})^{-1}\Big)
\normLii{f}{\omega_i}^2\\
&+40C_{\text{ov}}\sum\limits_{i=1}^{N}\Cw^2(H^2\lambda_{\ell^{\romb}_{i}+1}^{\ti})^{-1}
\Big(\seminormE{u}{\omega_i}^2+\Cpoin{\omega_i}H^2 \|f\|_{L^2_{\kappa^{-1}}(\omega_i)}^2\Big).
\end{align*}
Finally, the overlap condition \eqref{eq:overlap} leads to
\begin{equation}\label{eq:999}
\begin{aligned}
\int_{D}\kappa|\nabla \locv{e}{}{}|^2\dx&\leq 4C_{\text{ov}}^2\max_{i=1,\cdots,N}\Big\{(H\lambda_{\ell_i^{\roma}}^{\si})^{-2}
+(\lambda_{\ell_i^{\roma}}^{\si})^{-1}\Big\}
\normLii{f}{D}^2\\
&+40C_{\text{ov}}^2\Cw^2\max_{i=1,\cdots,N}\{(H^2
{\lambda_{\ell_i^{\romb}+1}^{\ti}})^{-1}
\}\\
&\times\Big(\seminormE{u}{D}^2+ H^2\max_{i=1,\cdots,N}\{{\Cpoin{\omega_i}}
\}\|f\|_{L^2_{\kappa^{-1}}(D)}^2\Big).
\end{aligned}
\end{equation}
Furthermore, since $f\in L^2_{\kappa^{-1}}(D)$, we obtain from Poincar\'{e}'s inequality \eqref{eq:poinConstantG} the {\em a priori} estimate
\begin{align}\label{eq:888}
\seminormE{u}{D}\leq \text{diam}(D)\Cpoin{D}^{1/2}\normLi{f}{D}.
\end{align}
Indeed, we can get by \eqref{eq:poinConstantG} that
\begin{align*}
\int_D \kappa u^{2}\dx\leq \text{diam}(D)^2{\Cpoin{D}}\int_{D}\kappa|\nabla u|^2\dx.
\end{align*}
Testing \eqref{eqn:pde} with $u\in V$, by H\"{o}lder's inequality, leads to
\begin{align*}
\int_{D}\kappa|\nabla u|^2\dx&=\int_D fu\; \dx
\leq \|f\|_{L^2_{\kappa^{-1}}(D)}\|u\|_{L^2_{\kappa}(D)}.
\end{align*}
These two inequalities together imply \eqref{eq:888}. Inserting \eqref{eq:888} into \eqref{eq:999} shows the desired assertion.
\end{proof}

An immediate corollary of Lemma \ref{lem:spectralApprox}, after appealing to the Galerkin orthogonality
property \cite[Corollary 2.5.10]{MR2373954}, is the following energy error between $u$ and $u_{\text{off}}^{\text{S}}$:
\begin{proposition}\label{prop:Finalspectral}
Assume that $f\in L^2_{\widetilde{\kappa}^{-1}}(D)\cap L^2_{{\kappa}^{-1}}(D)$ and let $\ell_i^{\roma}, \ell_i^{\romb}\in \mathbb{N}_{+}$ for all $i=1,2,\cdots, N$. Let $u\in V$ and $u_{{\rm off}}^{{\rm S}}\in V_{{\rm off}}^{{\rm S}}$ be the solutions to Problems \eqref{eqn:pde} and \eqref{cgvarform_spectral}, respectively. There holds
\begin{equation}\label{eq:snapErr}
\begin{aligned}
\seminormE{u-u_{{\rm off}}^{{\rm S}}}{D}&:=\min\limits_{w\in  V_{{\rm off}}^{{\rm S}}}\seminormE{u-w}{D}\\
&\leq \sqrt{2}C_{{\rm ov}}\max_{i=1,\cdots,N}\big\{({H\lambda_{\ell_i^{\roma}}^{\si}})^{-1}+
({\lambda_{\ell_i^{\roma}}^{\si}})^{-\frac12}
\big\}\normLii{f}{D}\\
&+7C_{{\rm ov}}\Cw{\rm C}_{{\rm poin}}\max_{i=1,\cdots,N}
\big\{({H^2\lambda_{\ell_i^{\romb}+1}^{\ti}})^{-\frac12}\big\}
\|f\|_{L^2_{\kappa^{-1}}(D)}.
\end{aligned}
\end{equation}
\end{proposition}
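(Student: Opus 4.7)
The plan is to invoke a standard C\'ea-type best-approximation argument and then apply Lemma \ref{lem:spectralApprox} to the explicit candidate it furnishes. Since the bilinear form $a(\cdot,\cdot)$ is symmetric and coercive on $V = H^{1}_{0}(D)$, and since $V_{\text{off}}^{\text{S}} \subset V$ is a conforming subspace, Galerkin orthogonality satisfied by $u_{\text{off}}^{\text{S}}$ in \eqref{cgvarform_spectral} identifies $u_{\text{off}}^{\text{S}}$ as the $a$-orthogonal projection of $u$ onto $V_{\text{off}}^{\text{S}}$. In particular,
\[
\seminormE{u - u_{\text{off}}^{\text{S}}}{D} = \min_{w \in V_{\text{off}}^{\text{S}}} \seminormE{u - w}{D},
\]
which establishes the defining equality in \eqref{eq:snapErr}.

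To obtain the quantitative upper bound, I would test this minimum against the specific candidate $w_{\text{off}}^{\text{S}} = \sum_{i=1}^{N} \chi_i \widetilde{u}_i$ supplied by Lemma \ref{lem:spectralApprox}, with $\widetilde{u}_i$ the rank-$\ell_i$ spectral surrogate defined in \eqref{eq:spectral-finiteRank}. Membership $w_{\text{off}}^{\text{S}} \in V_{\text{off}}^{\text{S}}$ is immediate from the definitions: each $\widetilde{u}_i$ is a linear combination of the kernel mode $1$, the eigenfunctions $v_j^{\si}$ for $1 \leq j < \ell_i^{\roma}$, the Steklov modes $v_k^{\ti}$ for $1 \leq k \leq \ell_i^{\romb}$, and the local function $v^i$, so that multiplying by $\chi_i$ and summing over $i$ produces a linear combination of the generators listed in \eqref{eq:globalBasis}. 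Applying Lemma \ref{lem:spectralApprox} to bound $\seminormE{u - w_{\text{off}}^{\text{S}}}{D}$ by the right-hand side of \eqref{eq:snapErr} then closes the argument.

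No genuine obstacle is anticipated, since all of the analytic heavy-lifting has already been consolidated in Lemma \ref{lem:spectralApprox}, which itself draws on Proposition \ref{prop:projection} for the source-driven local component $u^{i,\roma}$, on Lemma \ref{lemma:u2} (and hence on the transposition-method regularity bound of Theorem \ref{lem:very-weak}) for the boundary-driven component $u^{i,\romb}$, on the auxiliary local solver \eqref{eq:1-basis} that absorbs the rank-one piece $u^{i,\romc}$, and finally on the partition-of-unity pasting step together with the overlap bound \eqref{eq:overlap} and the global a priori estimate \eqref{eq:888}. In short, Proposition \ref{prop:Finalspectral} is an essentially immediate C\'ea-type corollary of Lemma \ref{lem:spectralApprox}; the only bookkeeping one must do is to confirm the membership of $w_{\text{off}}^{\text{S}}$ in $V_{\text{off}}^{\text{S}}$ and to transcribe the bound.
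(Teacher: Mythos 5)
Your proposal matches the paper's own proof: the paper likewise invokes Galerkin orthogonality to identify $\seminormE{u-u_{\rm off}^{\rm S}}{D}$ with the best-approximation error over $V_{\rm off}^{\rm S}$ and then bounds it by the candidate $w_{\rm off}^{\rm S}=\sum_i\chi_i\widetilde u_i$ via Lemma \ref{lem:spectralApprox}. The only caveat is a bookkeeping one already present in the paper itself: Lemma \ref{lem:spectralApprox} delivers the first term with constant $2C_{\rm ov}$, whereas \eqref{eq:snapErr} states $\sqrt{2}C_{\rm ov}$, so a literal transcription of the lemma yields the slightly larger constant.
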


\begin{remark}\label{rem:spectral}
According to Proposition \ref{prop:Finalspectral}, the convergence rate is essentially determined by two factors:
the smallest eigenvalue $\lambda_{\ell_i}^{\si}$ that is not included in the local spectral basis and
the coarse mesh size $H$. A proper balance between them is necessary for the convergence. For any
fixed $H>0$, in view of the eigenvalue problems \eqref{eq:spectral} and \eqref{eq:steklov},
a simple scaling argument implies
\begin{align*}
H^2\lambda^{\si}_{\ell_i^{\roma}}\to \infty \quad\text{ and }\quad H\lambda^{\ti}_{\ell_i^{\romb}}\to \infty,\quad \text{ as }\quad \ell_i^{\roma},\ell_i^{\romb} \to \infty.
\end{align*}
Hence, assuming that $\ell_i^{\roma}$ and $\ell_i^{\romb}$ are sufficiently large such that
$H^2\lambda^{\si}_{\ell_i^{\romb}}\geq 1$ and $H\lambda^{\ti}_{\ell_i^{\romb}}\geq H^{-3}$,
from Proposition \ref{prop:Finalspectral}, we obtain
\begin{align}\label{eq:aaaa}
\seminormE{u-u_{{\rm off}}^{{\rm S}}}{D}\lesssim H \Big(\normLii{f}{D}+\normLi{f}{D}\Big).
\end{align}
Note that the estimate of type \eqref{eq:aaaa} is the main goal of the convergence analysis for many multiscale methods
\cite{MR1660141, MR2721592, li2017error}. In practice, the numbers $\ell_i^{\roma}$ and $\ell_i^{\romb}$ of local multiscale functions
fully determine the computational complexity of the multiscale solver for Problem \eqref{cgvarform_spectral}
at the offline stage. However, its optimal choice rests on the decay rate of the nonincreasing sequences
$\big\{(\lambda_{n}^{\si})^{-1}\big\}_{n=1}^{\infty}$ and $\big\{(\lambda_{n}^{\ti})^{-1}\big\}_{n=1}^{\infty}$. The precise characterization of eigenvalue decay estimates
for heterogeneous problems seems poorly understood at present, and the topic is beyond the scope of the present work.
\end{remark}

\subsection{Harmonic extension bases approximation error}\label{subsec:harmonic}
By the definition of the local harmonic extension snapshot space $\locV{i}{snap}$ in \eqref{harmonic_ex}
and \eqref{eq:Vharmonic}, there exists $u^{i}_{\text{snap}}\in \locV{i}{snap}$ satisfying
\begin{align}\label{eq:usnap}
u^{i}_{\text{snap}}:=u_{h} \qquad\text{ on }\quad \partial \omega_i.
\end{align}

In the error analysis below, the weighted Friedrichs
(or Poincar\'{e}) inequalities play an important role. These inequalities require certain conditions
on the coefficient $\kappa$ and domain $D$ that in general are not fully understood. Assumption
\ref{ass:coeff} is one sufficient condition for the weighted Friedrichs inequality \cite{galvis2010domain,pechstein2012weighted}.

Now we can derive the following local energy error estimate.
\begin{lemma}\label{lem:energyHA}
Let $\locv{e}{i}{snap}=u_h-u^{i}_{\text{snap}}$. Then there holds
\begin{align}\label{eq:energyHA}
\seminormE{\locv{e}{i}{snap}}{\omega_i}\leq {H}\sqrt{\Cpoin{\omega_i}}\normLi{f}{\omega_i}.
\end{align}
\end{lemma}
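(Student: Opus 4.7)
The plan is to observe that $\locv{e}{i}{snap}$ satisfies a Dirichlet problem on $\omega_i$ with zero boundary data and right-hand side $f$, and then run the standard energy test coupled with the weighted Friedrichs inequality from Theorem \ref{thm:friedrichs}.

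First I would record the two defining properties of $u^i_{\text{snap}}$. By construction \eqref{harmonic_ex}--\eqref{eq:Vharmonic}, $u^i_{\text{snap}}$ lies in the fine-scale space on $\omega_i$, is $\kappa$-harmonic in the fine-scale weak sense, and matches the boundary data $u_h|_{\partial \omega_i}$ by \eqref{eq:usnap}. Hence $\locv{e}{i}{snap}\in V_h\cap H^1_0(\omega_i)$ and, since $u_h$ satisfies the fine-scale equation \eqref{eqn:weakform_h} restricted to test functions supported in $\omega_i$, the error satisfies
\[
\int_{\omega_i}\kappa\nabla\locv{e}{i}{snap}\cdot\nabla v_h\,\dx=(f,v_h)_{\omega_i}\quad\text{for every }v_h\in V_h\cap H^1_0(\omega_i).
\]

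Next I would test this identity with $v_h=\locv{e}{i}{snap}$ itself, which is admissible because $\locv{e}{i}{snap}$ vanishes on $\partial\omega_i$, to obtain
\[
\seminormE{\locv{e}{i}{snap}}{\omega_i}^{2}=\int_{\omega_i} f\,\locv{e}{i}{snap}\,\dx.
\]
Applying Cauchy--Schwarz in the $\kappa^{-1}/\kappa$-weighted pairing bounds the right-hand side by $\normLi{f}{\omega_i}\,\|\locv{e}{i}{snap}\|_{L^2_{\kappa}(\omega_i)}$. Since $\locv{e}{i}{snap}\in H^1_0(\omega_i)$, the weighted Friedrichs inequality \eqref{eq:poinConstant} provided by Theorem \ref{thm:friedrichs} yields
\[
\|\locv{e}{i}{snap}\|_{L^2_{\kappa}(\omega_i)}\leq H\sqrt{\Cpoin{\omega_i}}\,\seminormE{\locv{e}{i}{snap}}{\omega_i},
\]
with a constant independent of the contrast of $\kappa$ under Assumption \ref{ass:coeff}. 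Dividing through by $\seminormE{\locv{e}{i}{snap}}{\omega_i}$ gives \eqref{eq:energyHA}.

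The only conceptual subtlety, and the step I would pay most attention to, is ensuring that the error equation is genuinely a homogeneous-Dirichlet problem on $\omega_i$: the harmonic extension is defined via a fine-scale Dirichlet solver with boundary trace $u_h|_{\partial\omega_i}$, so the trace of $\locv{e}{i}{snap}$ does vanish exactly at the fine-grid boundary nodes on $\partial\omega_i$; combined with the fine-scale Galerkin identity satisfied by $u_h$, this is what legitimizes testing against $\locv{e}{i}{snap}$ itself without any boundary term appearing. Everything else is a one-line application of Cauchy--Schwarz followed by the Friedrichs inequality, and the $H$ factor comes precisely from the scaling in \eqref{eq:poinConstant}.
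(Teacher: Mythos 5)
Your proof is correct and follows essentially the same route as the paper: the paper likewise writes the error equation for $\locv{e}{i}{snap}$ as a homogeneous Dirichlet problem on $\omega_i$ with right-hand side $f$, tests with the error, and concludes by H\"older's inequality and the weighted Friedrichs inequality \eqref{eq:poinConstant}. Your extra care in phrasing the error equation at the discrete (fine-scale Galerkin) level is a welcome precision, but it does not change the argument.
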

\begin{proof}
Indeed, by definition, the following error equation holds:
\begin{equation}\label{eq:locErr}
\left\{
\begin{aligned}
-\nabla\cdot(\kappa\nabla \locv{e}{i}{snap})&=f \quad&&\text{ in } \omega_i,\\
\locv{e}{i}{snap}&=0 \quad&&\text{ on }\partial \omega_i.
\end{aligned}\right.
\end{equation}
Then \eqref{eq:poinConstant} and H\"{o}lder's inequality give the assertion.
\end{proof}

\begin{lemma}
Assume that $f\in L^2_{{\kappa}^{-1}}(D)$ and $\ell_i\in \mathbb{N}_{+}$ for all $i=1,2,\cdots, N$. Let $u_h\in V_h$ be the unique solution to Problem \eqref{eqn:weakform_h}. Denote $V_{{\rm snap}}\ni w_{{\rm snap}}:=\sum_{i=1}^{N}\chi_i u^{i}_{{\rm snap}}$. Then there holds
\begin{align*}
\seminormE{u_h-w_{{\rm snap}}}{D}\leq \sqrt{2C_{{\rm ov}}}H\max_{i=1,\cdots,N}\Big\{ C_0H\Cpoin{\omega_i}+\sqrt{\Cpoin{\omega_i}}\Big\}\normLi{f}{D}.
\end{align*}
\end{lemma}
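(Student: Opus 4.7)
The plan is to combine the local energy estimate in Lemma \ref{lem:energyHA} with the partition of unity property of $\{\chi_i\}$, in the same spirit as the proof of Lemma \ref{lem:spectralApprox}. The key observation is that since $\sum_{i=1}^N \chi_i\equiv 1$ and $u^i_{\text{snap}}=u_h$ on $\partial\omega_i$, the global error admits the pointwise decomposition
\begin{equation*}
u_h - w_{\text{snap}} = \sum_{i=1}^{N}\chi_i \locv{e}{i}{snap},\qquad \locv{e}{i}{snap}:=u_h-u^{i}_{\text{snap}},
\end{equation*}
where each $\locv{e}{i}{snap}\in H^1_{\kappa,0}(\omega_i)$ vanishes on $\partial\omega_i$. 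This last fact is crucial: it allows me to invoke the weighted Friedrichs inequality \eqref{eq:poinConstant} on the local quantities $\locv{e}{i}{snap}$.

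First I would square the energy norm and use the overlap condition \eqref{eq:overlap} together with a Cauchy--Schwarz type estimate to localize the computation, i.e.,
\begin{equation*}
\seminormE{u_h-w_{\text{snap}}}{D}^{2}\le C_{\text{ov}}\sum_{i=1}^{N}\int_{\omega_i}\kappa\bigl|\nabla(\chi_i \locv{e}{i}{snap})\bigr|^{2}\dx.
\end{equation*}
Then for each coarse patch, I expand the integrand as in \eqref{eq:444}: by Young's inequality
\begin{equation*}
\int_{\omega_i}\kappa\bigl|\nabla(\chi_i \locv{e}{i}{snap})\bigr|^{2}\dx \le 2\int_{\omega_i}\kappa|\nabla\chi_i|^{2}|\locv{e}{i}{snap}|^{2}\dx + 2\int_{\omega_i}\kappa\chi_i^{2}|\nabla\locv{e}{i}{snap}|^{2}\dx.
\end{equation*}
The second term is controlled by $2\seminormE{\locv{e}{i}{snap}}{\omega_i}^{2}$ since $0\le\chi_i\le 1$. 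For the first term I would use the uniform gradient bound \eqref{eq:gradientChi} to pull out $C_0^2$, then apply the weighted Friedrichs inequality \eqref{eq:poinConstant} (legitimate because $\locv{e}{i}{snap}\in H^{1}_{\kappa,0}(\omega_i)$) to obtain $2C_0^{2}H^{2}\Cpoin{\omega_i}\seminormE{\locv{e}{i}{snap}}{\omega_i}^{2}$.

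Finally, I would apply Lemma \ref{lem:energyHA} to bound $\seminormE{\locv{e}{i}{snap}}{\omega_i}\le H\sqrt{\Cpoin{\omega_i}}\normLi{f}{\omega_i}$, sum over $i$, and use the overlap condition \eqref{eq:overlap} once more on $\sum_i\normLi{f}{\omega_i}^{2}\le C_{\text{ov}}\normLi{f}{D}^{2}$. Combining these with the elementary inequality $\sqrt{a^{2}+b}\le a+\sqrt{b}$ applied to $\sqrt{(C_0^{2}H^{2}\Cpoin{\omega_i}+1)\Cpoin{\omega_i}}\le C_0 H\Cpoin{\omega_i}+\sqrt{\Cpoin{\omega_i}}$ will produce the stated constants $C_0H\Cpoin{\omega_i}+\sqrt{\Cpoin{\omega_i}}$ inside the maximum, and taking a square root yields the claimed bound.

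I do not foresee a serious obstacle; the proof is a relatively routine partition-of-unity argument once Lemma \ref{lem:energyHA} is in place. The only subtle point is that the weighted Friedrichs inequality relies on Assumption \ref{ass:coeff} to ensure $\Cpoin{\omega_i}$ is contrast-independent, and that the bound \eqref{eq:gradientChi} on $|\nabla\chi_i|$ is needed to trade the $|\nabla\chi_i|^2$ factor for $C_0^{2}$ without introducing any scale- or contrast-dependent loss.
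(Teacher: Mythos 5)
Your proposal is correct and follows essentially the same route as the paper's own proof: the same partition-of-unity decomposition, the overlap bound, Young's inequality, the gradient bound \eqref{eq:gradientChi} combined with the weighted Friedrichs inequality \eqref{eq:poinConstant} on $\locv{e}{i}{snap}\in H^1_{\kappa,0}(\omega_i)$, and finally Lemma \ref{lem:energyHA} with the overlap condition. The only addition is your explicit use of $\sqrt{a^2+b}\le a+\sqrt{b}$ to extract the stated constant, which the paper leaves implicit.
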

\begin{proof}
Let $\locv{e}{}{snap}:=u_h-w_{\text{snap}}$. Since $\{\chi_i\}_{i=1}^{N}$ forms a set of partition
of unity functions subordinated to the set $\{\omega_i\}_{i=1}^{N}$, we deduce
\[
\locv{e}{}{snap}=\sum\limits_{i=1}^{N}\chi_i\locv{e}{i}{snap},
\]
where $\locv{e}{i}{snap}:=u_h-u^{i}_{\text{snap}}$ is the local error on $\omega_i$.
Taking its squared energy norm and using the overlap condition \eqref{eq:overlap}, we arrive at
\begin{align}\label{eq:0000}
\int_{D}\kappa|\nabla \locv{e}{}{snap}|^2\dx&=\int_{D}\kappa|\sum\limits_{i=1}^{N}
\nabla(\chi_i\locv{e}{i}{snap})|^2\dx
\leq\Cov\sum\limits_{i=1}^{N}\int_{\omega_i}\kappa
|\nabla(\chi_i\locv{e}{i}{snap})|^2{\color{blue} \dx}.
\end{align}
It remains to estimate the integral term. Young's inequality gives
\begin{align*}
\int_{\omega_i}\kappa
|\nabla(\chi_i\locv{e}{i}{snap})|^2\dx&\leq 2\Big( \int_{\omega_i}\big(\kappa
|\nabla\chi_i|^2\big)|\locv{e}{i}{snap}|^2\dx+\int_{\omega_i}\kappa
|\nabla\locv{e}{i}{snap}|^2\dx\Big).
\end{align*}
Taking \eqref{eq:gradientChi} and \eqref{eq:poinConstant} into account, we get
\begin{align*}
\sum\limits_{i=1}^{N}\int_{\omega_i}\kappa
|\nabla(\chi_i\locv{e}{i}{snap})|^2\dx&\leq 2\sum\limits_{i=1}^{N}\Big( C_0^2H^2\Cpoin{\omega_i}+1\Big)\int_{\omega_i}\kappa
|\nabla\locv{e}{i}{snap}|^2\dx.
\end{align*}
This and \eqref{eq:energyHA} yield
\begin{align*}
\sum\limits_{i=1}^{N}\int_{\omega_i}\kappa
|\nabla(\chi_i\locv{e}{i}{snap})|^2\dx
\leq 2\sum\limits_{i=1}^{N}\Big(C_0^2H^2\Cpoin{\omega_i}+1\Big)\times H^2\Cpoin{\omega_i}\normLi{f}{\omega_i}^2.
\end{align*}
Finally, the overlap condition \eqref{eq:overlap} and inequality \eqref{eq:0000} show the desired assertion.
\end{proof}

Finally, we derive an energy error estimate for the conforming Galerkin approximation to Problem \eqref{eqn:pde}
based on the multiscale space $V_{\text{snap}}$.
\begin{proposition}\label{prop:FinalSnap}
Assume that $f\in L^2_{{\kappa}^{-1}}(D)$. Let $u\in V$ and $u_{{\rm snap}}\in V_{{\rm snap}}$ be the solutions to Problems \eqref{eqn:pde} and \eqref{cgvarform_snap}, respectively. Then there holds
\begin{align*}
\seminormE{u-u_{{\rm snap}}}{D}\leq \sqrt{2C_{{\rm ov}}}H\max_{i=1,\cdots,N}\Big\{ C_0H\Cpoin{\omega_i}&+\sqrt{\Cpoin{\omega_i}}\Big\}\normLi{f}{D}+\min\limits_{v_h\in V_h}\seminormE{u-v_h}{D}.
\end{align*}
\end{proposition}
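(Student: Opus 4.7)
The proposition combines, in a textbook fashion, three ingredients already available to us: the Galerkin quasi-optimality for the conforming discretization \eqref{cgvarform_snap}, the fine-scale best-approximation bound \eqref{eq:fineApriori}, and the energy estimate on $\seminormE{u_h-w_{\mathrm{snap}}}{D}$ established in the preceding lemma. The plan is to chain these through one triangle inequality.

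First, since the local harmonic extensions $\phi_j^{\hi}$ are $H^1_\kappa(\omega_i)$ functions and the partition of unity functions $\chi_i$ vanish on $\partial\omega_i$ by \eqref{eq:chi_supp}, every element of $V_{\mathrm{snap}}$ extends by zero to an $H^1_0(D)$ function. Hence $V_{\mathrm{snap}}\subset V$ and \eqref{cgvarform_snap} is a conforming Galerkin scheme. The Galerkin orthogonality property (see \cite[Corollary 2.5.10]{MR2373954}) immediately yields
\[
\seminormE{u-u_{\mathrm{snap}}}{D}\ \leq\ \seminormE{u-w}{D}\quad\text{for every } w\in V_{\mathrm{snap}}.
\]
I would then select the specific competitor $w_{\mathrm{snap}}=\sum_{i=1}^{N}\chi_i u^i_{\mathrm{snap}}\in V_{\mathrm{snap}}$ from the previous lemma, insert the fine-scale solution $u_h$, and apply the triangle inequality:
\[
\seminormE{u-w_{\mathrm{snap}}}{D}\ \leq\ \seminormE{u-u_h}{D}+\seminormE{u_h-w_{\mathrm{snap}}}{D}.
\]

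The two summands are then controlled separately. The first is bounded by $\min_{v_h\in V_h}\seminormE{u-v_h}{D}$ by virtue of \eqref{eq:fineApriori}. The second is precisely the quantity controlled by the preceding lemma, which gives
\[
\seminormE{u_h-w_{\mathrm{snap}}}{D}\ \leq\ \sqrt{2C_{\mathrm{ov}}}\,H\max_{i=1,\cdots,N}\bigl\{C_0H\,\Cpoin{\omega_i}+\sqrt{\Cpoin{\omega_i}}\bigr\}\normLi{f}{D}.
\]
Adding the two contributions delivers the stated estimate with exactly the same constants as in the lemma, since no further manipulations are needed.

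The only mild technical point, rather than a real obstacle, is the conformity check $V_{\mathrm{snap}}\subset H^1_0(D)$ mentioned above; it lets us use Céa's lemma without any penalty term. All the heavy analytical machinery (weighted Friedrichs, overlap bookkeeping, gradient bounds for $\chi_i$) has already been invested in the preceding lemma, so the remaining steps are bookkeeping: invoke Galerkin quasi-optimality, insert $u_h$ by triangle inequality, and cite \eqref{eq:fineApriori} together with the previous local estimate.
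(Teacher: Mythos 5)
Your proof is correct and follows exactly the route the paper takes: Galerkin quasi-optimality in the energy norm, the competitor $w_{\rm snap}=\sum_i\chi_i u^i_{\rm snap}$ from the preceding lemma, a triangle inequality through $u_h$, and the fine-scale bound \eqref{eq:fineApriori}. The conformity check $V_{\rm snap}\subset H^1_0(D)$ is a sensible addition that the paper leaves implicit.
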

\begin{proof}
This assertion follows directly from the Galerkin orthogonality property \cite[Corollary 2.5.10]{MR2373954},
the triangle inequality and the fine-scale {\em a priori} estimate \eqref{eq:fineApriori}.
\end{proof}
\subsection{Discrete POD approximation error}\label{sec:discretePOD}
Now we turn to the discrete POD approximation.
First, we present an {\em a priori} estimate for Problem \eqref{eqn:weakform_h}.
It will be used to derive the energy estimate for $u_{\text{snap}}^i$ defined in \eqref{eq:usnap}.
\begin{lemma}
Assume that $f\in L^2_{{\kappa}^{-1}}(D)$. Let $u_h\in V_h$ be the solution to Problem \eqref{eqn:weakform_h}. Then there holds
\begin{align}
\seminormE{u_h}{D}&\leq 2{\rm diam}(D)\sqrt{\Cpoin{D}}\normLi{f}{D}. \label{eq:uApriori}
\end{align}
\end{lemma}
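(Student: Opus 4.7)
The plan is to mirror the \emph{a priori} estimate \eqref{eq:888} obtained for the continuous solution $u$, since $u_h$ solves the same weak form \eqref{eqn:weakform_h} but only tested against $V_h\subset V=H^1_0(D)$. The argument reduces to the familiar test-with-itself energy identity combined with a weighted Cauchy--Schwarz and the weighted Poincar\'e (Friedrichs) inequality from Theorem \ref{thm:friedrichs}.

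First, I would set $v_h=u_h$ in \eqref{eqn:weakform_h} to obtain
\begin{equation*}
\seminormE{u_h}{D}^2 = a(u_h,u_h) = (f,u_h)_D = \int_D \kappa^{-1/2} f \cdot \kappa^{1/2} u_h \,\dx.
\end{equation*}
Next, I would apply H\"older's inequality with the weights $\kappa^{-1/2}$ and $\kappa^{1/2}$ to bound this by $\normLi{f}{D}\,\normL{\kappa^{1/2}u_h}{D}=\normLi{f}{D}\,\|u_h\|_{L^2_{\kappa}(D)}$. Since $u_h\in V_h\subset H^1_0(D)$, the weighted Friedrichs inequality \eqref{eq:poinConstantG} gives
\begin{equation*}
\|u_h\|_{L^2_{\kappa}(D)} \leq \mathrm{diam}(D)\,\Cpoin{D}^{1/2}\,\seminormE{u_h}{D}.
\end{equation*}
Combining the two estimates and dividing through by $\seminormE{u_h}{D}$ yields
\begin{equation*}
\seminormE{u_h}{D}\leq \mathrm{diam}(D)\,\Cpoin{D}^{1/2}\,\normLi{f}{D},
\end{equation*}
which in particular implies the claimed bound with the constant $2$ (the factor $2$ is clearly slack but matches the form of the stated inequality, echoing \eqref{eq:888}).

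There is no real obstacle here: the proof is essentially the classical energy argument, and the only nontrivial ingredient is the contrast-independence of $\Cpoin{D}$, which is precisely the content of Theorem \ref{thm:friedrichs} under Assumption \ref{ass:coeff}. The main point to emphasize is that the bound is uniform in the fine mesh parameter $h$, so that $u_h$ enjoys the same qualitative control as $u$, which is what will be needed when estimating $u_h-u^i_{\text{snap}}$ and subsequently the POD truncation error in the sequel.
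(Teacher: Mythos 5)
Your proof is correct and follows essentially the same route as the paper: the paper also derives $\seminormE{u_h}{D}\leq \mathrm{diam}(D)\sqrt{\Cpoin{D}}\normLi{f}{D}$ by the energy argument ``in analogy to \eqref{eq:888}'' and then absorbs the slack into the stated factor $2$ (via a triangle inequality that is, as you note, not actually needed for the bound on $u_h$ alone). Nothing is missing.
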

\begin{proof}
In analogy to \eqref{eq:888}, we obtain
\begin{align*}
\seminormE{u}{D}&\leq \text{diam}(D)\sqrt{\Cpoin{D}}\normLi{f}{D},\\
\seminormE{u_h}{D}&\leq \text{diam}(D)\sqrt{\Cpoin{D}}\normLi{f}{D}.
\end{align*}
This and the triangle inequality lead to the desired assertion.
\end{proof}

Let $u^{i}_{\text{snap}}\in \locV{i}{snap}$ be defined in \eqref{eq:usnap}.
Then we deduce from \eqref{eq:energyHA} and the triangle inequality that
\begin{align}\label{usnap:apriori}
\seminormE{u_{\text{snap}}^i}{\omega_i}\leq  {H}\Cpoin{\omega_i}^{1/2}\normLi{f}{\omega_i}+\seminormE{u_h}{\omega_i}.
\end{align}
Note that the series $\{v_j^{\hi}\}_{j=1}^{L_i}$ forms a set of orthogonal basis in $ \locV{i}{snap}$, cf.
\eqref{eq:podNorm}. Therefore, the function $u^{i}_{\text{snap}}\in \locV{i}{snap}$ admits the following expansion
\begin{align}\label{eq:usnap_expand}
u^{i}_{\text{snap}}=\sum\limits_{j=1}^{L_i}(u^{i}_{\text{snap}},v_j^{\hi})_i v_j^{\hi}.
\end{align}
To approximate $u^{i}_{\text{snap}}$ in the space $V_{\text{off}}^{\hi,n}$ of dimension $n$ for some $\mathbb{N}_{+}\ni n\leq L_i$,
we take its first $n$-term truncation:
\begin{align}\label{eq_uin}
u^i_n:=\mathcal{P}^{\hi,n}u^{i}_{\text{snap}}=\sum\limits_{j=1}^{n}(u^{i}_{\text{snap}},v_j^{\hi})_i v_j^{\hi},
\end{align}
where the projection operator $\mathcal{P}^{\hi,n}$ is defined in \eqref{eqn:proj-pod}.

The next result provides the approximation property of $u^i_n$ to $u^{i}_{\text{snap}}$ in the $L^2_{\widetilde{\kappa}}(\omega_i)$ norm:
\begin{lemma}\label{lem:5.1}
Assume that $f\in L^2_{{\kappa}^{-1}}(D)$. Let $u^{i}_{{\rm snap}}\in \locV{i}{\rm snap}$ and $u^i_n\in V_{{\rm off}}^{\hi,n}$ be defined in \eqref{eq:usnap} and \eqref{eq_uin} for $\mathbb{N}_{+}\ni n\leq L_i$, respectively.  Then there holds
\begin{align*}
\| u^{i}_{{\rm snap}}-u^i_n \|_{L^2_{\widetilde{\kappa}}(\omega_i)}\leq \sqrt{2}(\lambda_{n+1}^{\hi})^{-1/2}\Big(   {H}\sqrt{\Cpoin{\omega_i}}\normLi{f}{\omega_i}+\seminormE{u_h}{\omega_i}\Big).
\end{align*}
\end{lemma}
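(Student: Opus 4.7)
The plan is to mimic the pattern already used twice in the paper (for the local spectral basis in Proposition \ref{prop:projection} and for the Steklov basis in Lemma \ref{lemma:u2}): use the discrete POD eigenvalues as a spectral filter that converts a truncation remainder into a weighted tail sum, then shift $(\lambda_{n+1}^{\hi})^{-1}$ in front and collapse what remains into the energy seminorm of $u^{i}_{\rm snap}$, which has already been controlled a priori.

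More concretely, I would start from the expansion \eqref{eq:usnap_expand} and the truncation \eqref{eq_uin}, which give
\[
u^{i}_{\rm snap}-u^{i}_{n}=\sum_{j=n+1}^{L_i}(u^{i}_{\rm snap},v_j^{\hi})_i\, v_j^{\hi}.
\]
Using the $L^2_{\widetilde\kappa}(\omega_i)$-orthonormality in \eqref{eq:podNorm} (first identity), Parseval yields
\[
\|u^{i}_{\rm snap}-u^{i}_{n}\|_{L^2_{\widetilde\kappa}(\omega_i)}^{2}=\sum_{j=n+1}^{L_i}\big|(u^{i}_{\rm snap},v_j^{\hi})_i\big|^{2}.
\]
Inserting the trivial factor $\lambda_j^{\hi}/\lambda_j^{\hi}$ inside the sum and using the fact that $\lambda_j^{\hi}\geq \lambda_{n+1}^{\hi}$ for $j\geq n+1$, I obtain
\[
\|u^{i}_{\rm snap}-u^{i}_{n}\|_{L^2_{\widetilde\kappa}(\omega_i)}^{2}\leq (\lambda_{n+1}^{\hi})^{-1}\sum_{j=n+1}^{L_i}\lambda_j^{\hi}\big|(u^{i}_{\rm snap},v_j^{\hi})_i\big|^{2}\leq (\lambda_{n+1}^{\hi})^{-1}\sum_{j=1}^{L_i}\lambda_j^{\hi}\big|(u^{i}_{\rm snap},v_j^{\hi})_i\big|^{2}.
\]
By the second identity in \eqref{eq:podNorm}, this last sum is exactly $\seminormE{u^{i}_{\rm snap}}{\omega_i}^{2}$, giving the clean intermediate bound
\[
\|u^{i}_{\rm snap}-u^{i}_{n}\|_{L^2_{\widetilde\kappa}(\omega_i)}\leq (\lambda_{n+1}^{\hi})^{-1/2}\,\seminormE{u^{i}_{\rm snap}}{\omega_i}.
\]

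To finish, I would just feed in the a priori estimate \eqref{usnap:apriori}, namely
$\seminormE{u^{i}_{\rm snap}}{\omega_i}\leq H\sqrt{\Cpoin{\omega_i}}\,\normLi{f}{\omega_i}+\seminormE{u_h}{\omega_i}$, and apply the elementary inequality $a+b\leq\sqrt{2}\sqrt{a^{2}+b^{2}}$ (or, equivalently, Young's inequality on the squared energy norm before taking the square root) to account for the $\sqrt{2}$ in the stated constant. There is no real obstacle: all ingredients — the discrete POD biorthogonality in \eqref{eq:podNorm}, the expansion \eqref{eq:usnap_expand}, and the a priori bound \eqref{usnap:apriori} — are already in place, and the argument is a direct analogue of the spectral-cutoff step in the proof of Proposition \ref{prop:projection}. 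The only point that deserves a brief sentence of justification is that $\{v_j^{\hi}\}_{j=1}^{L_i}$ is genuinely a basis of $V^{\hi}_{\rm snap}$ (so the expansion \eqref{eq:usnap_expand} holds with equality on $u^{i}_{\rm snap}\in V^{\hi}_{\rm snap}$), which follows from the generalized eigenvalue problem \eqref{offeig} being posed on the $L_i$-dimensional space and producing $L_i$ eigenvectors that are orthonormal with respect to $S^{\rm off}$.
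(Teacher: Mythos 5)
Your proposal is correct and follows essentially the same route as the paper: Parseval in $L^2_{\widetilde\kappa}(\omega_i)$ via \eqref{eq:usnap_expand} and \eqref{eq:podNorm}, the spectral filter $\lambda_j^{\hi}/\lambda_j^{\hi}$ with $\lambda_j^{\hi}\geq\lambda_{n+1}^{\hi}$, identification of the weighted tail with $\seminormE{u^i_{\rm snap}}{\omega_i}^2$, and the a priori bound \eqref{usnap:apriori}. The only cosmetic difference is that you apply \eqref{usnap:apriori} before squaring, so your intermediate bound $(\lambda_{n+1}^{\hi})^{-1/2}\seminormE{u^i_{\rm snap}}{\omega_i}$ already implies the stated estimate without any $\sqrt{2}$; the paper's $\sqrt{2}$ arises only because it squares the a priori bound first via $(a+b)^2\leq 2(a^2+b^2)$.
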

\begin{proof}
It follows from the expansion \eqref{eq:usnap_expand} and \eqref{eq:podNorm} that
\begin{align*}
\int_{\omega_i }\kappa|\nabla u^{i}_{\text{snap}}|^2\dx=\sum\limits_{j=1}^{L_i}|(u^{i}_{\text{snap}},v_j^{\hi})_i|^2 \lambda_j^{\hi}.
\end{align*}
Together with \eqref{usnap:apriori}, we get
\begin{align}\label{eq:333}
\sum\limits_{j=1}^{L_i}|(u^i_{\rm snap},v_j^{\hi})_i|^2 \lambda_j^{\hi}\leq 2\Big(  {H}^2{\Cpoin{\omega_i}}\normLii{f}{\omega_i}^2+\seminormE{u_h}{\omega_i}^2\Big).
\end{align}
Meanwhile, the combination of \eqref{eq_uin}, \eqref{eq:usnap_expand} and \eqref{eq:podNorm} leads to
\begin{align*}
\| u^{i}_{\text{snap}}-u^i_n \|_{L^2_{\widetilde{\kappa}}(\omega_i)}^2&=\sum\limits_{j=n+1}^{L_i}|(u^{i}_{\text{snap}},v_j^{\hi})_i|^2
=\sum\limits_{j=n+1}^{L_i}(\lambda_j^{\hi})^{-1}\lambda_j^{\hi}|(u^{i}_{\text{snap}},v_j^{\hi})_i|^2\\
&\leq (\lambda_{n+1}^{\hi})^{-1}\sum\limits_{j=n+1}^{L_i}\lambda_j^{\hi}|(u^{i}_{\text{snap}},v_j^{\hi})_i|^2.
\end{align*}
Further, an application of \eqref{eq:333} implies
\begin{align*}
\| u^{i}_{\text{snap}}-u^i_n \|_{L^2_{\widetilde{\kappa}}(\omega_i)}^2
\leq (\lambda_{n+1}^{\hi})^{-1}\times 2\Big(  {H}^2{\Cpoin{\omega_i}}\normLi{f}{\omega_i}^2+\seminormE{u_h}{\omega_i}^2\Big).
\end{align*}
Finally, taking the square root on both sides shows the desired result.
\end{proof}

Note that for all $\mathbb{N}_{+}\ni n\leq L_i$, both approximations $u^{i}_{\text{snap}}$ and $u^i_n$ are $\kappa$-harmonic functions.
Thus, we can apply the argument in the proof of \eqref{eq:777} to get the following local energy error estimate.
\begin{lemma}\label{lem:5.2}
Let $u^{i}_{{\rm snap}}\in \locV{i}{\rm snap}$ and $u^i_n\in V_{{\rm off}}^{\hi,n}$ be defined in \eqref{eq:usnap}
and \eqref{eq_uin} for all $\mathbb{N}_{+}\ni n\leq L_i$.  Then there holds
\begin{align*}
\int_{\omega_i}\chi_i^2\kappa |\nabla (u^{i}_{{\rm snap}}-u^i_n) |^2\dx
&\leq 4H^{-2}\int_{\omega_i}\widetilde{\kappa} (u^{i}_{{\rm snap}}-u^i_n)^2\,\dx.
\end{align*}
\end{lemma}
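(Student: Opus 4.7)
The plan is to mimic, step for step, the Caccioppoli-type argument used in the final part of the proof of Lemma~\ref{lemma:u2} (inequality \eqref{eq:777}). The only genuinely new thing to verify beforehand is that the difference $e_n := u^{i}_{\text{snap}}-u^i_n$ is $\kappa$-harmonic on $\omega_i$; the rest is a mechanical repetition.

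First I would check $\kappa$-harmonicity of both terms. By construction, $u^{i}_{\text{snap}}\in V_{\text{snap}}^{\hi}$ is a linear combination of the basis functions $\phi_j^{\hi}$, each of which solves \eqref{harmonic_ex}, so it satisfies $-\nabla\cdot(\kappa\nabla u^{i}_{\text{snap}})=0$ in $\omega_i$. The POD basis functions $v_j^{\hi}$ are in turn linear combinations of the $\phi_j^{\hi}$ by \eqref{eq:pod-basis}, hence also $\kappa$-harmonic; therefore so is $u^i_n = \mathcal{P}^{\hi,n}u^{i}_{\text{snap}}$ by \eqref{eq_uin}. Consequently $e_n$ satisfies
\begin{equation*}
-\nabla\cdot(\kappa\nabla e_n)=0 \quad\text{in }\omega_i,
\end{equation*}
which is exactly the situation addressed in the derivation of \eqref{eq:777}.

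Next I would carry out the Caccioppoli step. Since $\chi_i=0$ on $\partial\omega_i$ by \eqref{eq:chi_supp}, multiplying the equation above by $\chi_i^2 e_n$, integrating over $\omega_i$, and integrating by parts gives
\begin{equation*}
\int_{\omega_i}\chi_i^2\kappa|\nabla e_n|^2\,\dx = -2\int_{\omega_i}\kappa\, \chi_i e_n\,\nabla\chi_i\cdot\nabla e_n\,\dx.
\end{equation*}
Applying the Cauchy--Schwarz inequality followed by Young's inequality to the right-hand side, the gradient term can be absorbed into the left-hand side, yielding
\begin{equation*}
\int_{\omega_i}\chi_i^2\kappa|\nabla e_n|^2\,\dx \leq 4\int_{\omega_i}\kappa|\nabla\chi_i|^2\, e_n^{2}\,\dx.
\end{equation*}

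Finally, I would invoke the definition \eqref{defn:tildeKappa}, which gives the pointwise bound $\kappa|\nabla\chi_i|^2 \leq H^{-2}\widetilde\kappa$. Substituting this into the last display yields the claimed estimate
\begin{equation*}
\int_{\omega_i}\chi_i^2\kappa|\nabla e_n|^2\,\dx \leq 4H^{-2}\int_{\omega_i}\widetilde\kappa\, e_n^{2}\,\dx.
\end{equation*}
There is no real obstacle here; the only subtlety worth pausing over is confirming that the POD construction preserves $\kappa$-harmonicity, after which the argument is identical to the Caccioppoli step already used in Lemma~\ref{lemma:u2}. Combined afterwards with Lemma~\ref{lem:5.1}, this estimate will give the weighted energy bound needed for the POD error analysis.
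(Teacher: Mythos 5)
Your proof is correct and is exactly the argument the paper intends: the paper states Lemma~\ref{lem:5.2} with the one-line remark that the proof is analogous to that of \eqref{eq:777}, and your write-up faithfully reproduces that Caccioppoli-type computation, including the preliminary observation (also noted in the paper just before the lemma) that $u^i_{\rm snap}$ and $u^i_n$ are $\kappa$-harmonic so that $e_n$ solves the homogeneous equation.
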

\begin{proof}
The proof is analogous to that for \eqref{eq:777}, and thus omitted.
\end{proof}

With the help of local estimates presented in Lemmas \ref{lem:5.1} and \ref{lem:5.2},
we can now  bound the energy error for the POD method by means of the partition of unity
FEM \cite[Theorem 2.1]{melenk1996partition}.
\begin{lemma}\label{lem:4.6}
Assume that $f\in L^2_{\kappa^{-1}}(D)$.
For all $\mathbb{N}_{+}\ni \ell_i\leq L_i$, denote $V_{{\rm snap}}\ni w_{{\rm snap}}:
=\sum_{i=1}^{N}\chi_i u^{i}_{{\rm snap}}$ and $V_{{\rm off}}^{\text{H}}\ni w_{{\rm off}}^{{\rm H}}
:=\sum_{i=1}^{N}\chi_i u^{i}_{\ell_i}$. Then there holds
\begin{align*}
\seminormE{w_{{\rm snap}}-w_{{\rm off}}^{{\rm H}}}{D}\leq \sqrt{20C_{{\rm ov}}}&\max_{i=1,\cdots,N}
\Big\{{(H^{2}\lambda_{\ell_i+1}^{\hi})^{-1/2}}\Big\}
C_1\normLi{f}{D},
\end{align*}
where the constant $C_1$ is given by
$C_1:=H\max_{i=1,\cdots,N}\big\{\sqrt{\Cpoin{\omega_i}}\big\}+2{\rm diam}(D)\sqrt{\Cpoin{D}}.$
\end{lemma}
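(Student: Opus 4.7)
The plan is to adapt the partition-of-unity FEM strategy already used in the proof of Lemma \ref{lem:spectralApprox}, combining it with the two local ingredients just established (Lemmas \ref{lem:5.1} and \ref{lem:5.2}). Since $\{\chi_i\}_{i=1}^{N}$ is a partition of unity subordinate to $\{\omega_i\}$, the global error admits the decomposition
\[
w_{\rm snap}-w_{\rm off}^{\rm H}=\sum_{i=1}^{N}\chi_i\bigl(u^i_{\rm snap}-u^i_{\ell_i}\bigr).
\]
First I would square the energy seminorm, apply the overlap bound \eqref{eq:overlap}, and reduce the estimate to a sum of local contributions $\int_{\omega_i}\kappa|\nabla(\chi_i(u^i_{\rm snap}-u^i_{\ell_i}))|^2\dx$.

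Next, for each local term I would use Young's inequality to split the integrand into $\kappa|\nabla\chi_i|^2|u^i_{\rm snap}-u^i_{\ell_i}|^2$ and $\kappa\chi_i^2|\nabla(u^i_{\rm snap}-u^i_{\ell_i})|^2$, exactly as in \eqref{eq:444}. The first piece is controlled by $H^{-2}\|u^i_{\rm snap}-u^i_{\ell_i}\|_{L^2_{\widetilde\kappa}(\omega_i)}^2$ via the defining identity \eqref{defn:tildeKappa} (together with the overlap constant). The second piece is exactly the quantity bounded by Lemma \ref{lem:5.2}, which also returns $H^{-2}\|u^i_{\rm snap}-u^i_{\ell_i}\|_{L^2_{\widetilde\kappa}(\omega_i)}^2$ up to a factor of $4$. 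So both terms reduce, up to constants, to the weighted $L^2$ error on $\omega_i$.

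At this point I would apply Lemma \ref{lem:5.1} with $n=\ell_i$ to bound $\|u^i_{\rm snap}-u^i_{\ell_i}\|_{L^2_{\widetilde\kappa}(\omega_i)}$ by $\sqrt{2}(\lambda_{\ell_i+1}^{\hi})^{-1/2}\bigl(H\sqrt{\Cpoin{\omega_i}}\normLi{f}{\omega_i}+\seminormE{u_h}{\omega_i}\bigr)$, multiplying through by $H^{-2}$. Summing in $i$, using the overlap condition once more to turn $\sum_i\normLi{f}{\omega_i}^2$ and $\sum_i\seminormE{u_h}{\omega_i}^2$ into $\Cov$ times the global quantities, and taking square roots will produce the factor $\max_i\{(H^2\lambda_{\ell_i+1}^{\hi})^{-1/2}\}$ outside. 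Finally the global a priori estimate \eqref{eq:uApriori}, $\seminormE{u_h}{D}\leq 2\,{\rm diam}(D)\sqrt{\Cpoin{D}}\normLi{f}{D}$, converts the $\seminormE{u_h}{D}$ term into an $\normLi{f}{D}$ term, and consolidating the two $\normLi{f}{D}$ contributions gives the stated constant $C_1 = H\max_i\sqrt{\Cpoin{\omega_i}}+2\,{\rm diam}(D)\sqrt{\Cpoin{D}}$.

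No real obstacle is expected: the argument is a direct patching of local estimates, and all necessary ingredients (the Cacciopoli-type bound of Lemma \ref{lem:5.2}, the spectral $L^2_{\widetilde\kappa}$ estimate of Lemma \ref{lem:5.1}, the stability \eqref{eq:uApriori}, and the overlap constant $\Cov$) are already in place. The only mildly delicate point is bookkeeping the constants so that the factor $\sqrt{20\Cov}$ emerges: the $H^{-2}$ from the $|\nabla\chi_i|^2$ term contributes a factor $1$, Lemma \ref{lem:5.2} contributes a factor $4$, together $5$; Young's splitting doubles this to $10$; the $\sqrt{2}$ from Lemma \ref{lem:5.1} squares to $2$, giving $20$, and the outer $\Cov$ from \eqref{eq:overlap} completes the $20\Cov$.
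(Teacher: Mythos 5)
Your proposal follows essentially the same route as the paper's proof: the partition-of-unity decomposition, the Young's-inequality split as in \eqref{eq:444}, Lemma \ref{lem:5.2} for the gradient piece, Lemma \ref{lem:5.1} for the weighted $L^2$ piece, and finally \eqref{eq:uApriori} together with the overlap condition, with the same $2\times(1+4)\times 2=20$ constant count. The only wrinkle is the bookkeeping of $\Cov$: you invoke the overlap condition both at the start (for $|\sum_i\nabla(\chi_i e_i)|^2$) and at the end (for $\sum_i\normLi{f}{\omega_i}^2$), which strictly yields $\sqrt{20}\,\Cov$ rather than $\sqrt{20\Cov}$ — the paper's own proof records it only once, so this is a harmless discrepancy in the stated constant rather than a gap in your argument.
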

\begin{proof}
An argument similar to \eqref{eq:444} leads to
\begin{align*}
\seminormE{w_{\text{snap}}-w_{\text{off}}^{\text{H}}}{D}^2\leq 2\sum\limits_{i=1}^{N}\Big( H^{-2}\int_{\omega_i}\widetilde{\kappa}|u^{i}_{\text{snap}}-u^i_{\ell_i}|^2\dx+\int_{\omega_i}\chi_i^2\kappa
|\nabla (u^{i}_{\text{snap}}-u^i_{\ell_i})|^2\dx\Big).
\end{align*}
Together with Lemma \ref{lem:5.2}, we obtain
\begin{align*}
\seminormE{w_{\text{snap}}-w_{\text{off}}^{\text{H}}}{D}^2\leq 10H^{-2}\sum\limits_{i=1}^{N} \int_{\omega_i}\widetilde{\kappa}|u^{i}_{\text{snap}}-u^i_{\ell_i}|^2\dx.
\end{align*}
Then from Lemma \ref{lem:5.1}, we deduce
\begin{align*}
\seminormE{w_{\text{snap}}-w_{\text{off}}^{\text{H}}}{D}^2\leq 20 \max_{i=1,\cdots,N}\{{(H^{2}\lambda_{\ell_i+1}^{\hi})^{-1}}\}\sum\limits_{i=1}^{N}\Big(  {H}^2{\Cpoin{\omega_i}}\normLi{f}{\omega_i}^2+\seminormE{u_h}{\omega_i}^2\Big).
\end{align*}
Finally, the overlap condition \eqref{eq:overlap} together with \eqref{eq:uApriori} shows the desired assertion.
\end{proof}

Finally, we derive an error estimate for the CG approximation to Problem \eqref{eqn:pde} based
on the discrete POD multiscale space $V_{\text{off}}^{\text{H}}$.
\begin{proposition}\label{prop:Finalpod}
Assume that $f\in L^2_{{\kappa}^{-1}}(D)$ and $\ell_i\in \mathbb{N}_{+}$ for all $i=1,2,\cdots, N$. Let $u\in V$ and
$u_{{\rm off}}^{{\rm H}}\in V_{{\rm off}}^{{\rm H}}$ be the solutions to Problems \eqref{eqn:pde} and \eqref{cgvarform_pod}, respectively. Then there holds
\begin{align}\label{eq:podErr}
\seminormE{u-u_{{\rm off}}^{{\rm H}}}{D}&\leq \sqrt{2C_{{\rm ov}}}H\max_{i=1,\cdots,N}\Big\{ C_0H\Cpoin{\omega_i}+\sqrt{\Cpoin{\omega_i}}\Big\}\normLi{f}{D}\\
&+\sqrt{20C_{{\rm ov}}}\max_{i=1,\cdots,N}\Big\{{(H^{2}\lambda_{\ell_i+1}^{\hi})^{-\frac12}}\Big\} C_1\normLi{f}{D}
+\min\limits_{v_h\in V_h}\seminormE{u-v_h}{D}. \nonumber
\end{align}
\end{proposition}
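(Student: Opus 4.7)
The plan is to exploit the nested structure $V_{\text{off}}^{\text{H}} \subset V_{\text{snap}} \subset V$ together with Galerkin (Cea) quasi-optimality, and then splice in the three key local-to-global ingredients that have already been established, namely the fine-scale approximation \eqref{eq:fineApriori}, the snapshot approximation used inside the proof of Proposition \ref{prop:FinalSnap}, and the POD reduction bound of Lemma \ref{lem:4.6}.

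First I would invoke the Galerkin orthogonality property to reduce the energy error to a best-approximation error:
\begin{equation*}
\seminormE{u-u_{\text{off}}^{\text{H}}}{D} \;=\; \min_{w \in V_{\text{off}}^{\text{H}}} \seminormE{u-w}{D} \;\leq\; \seminormE{u - w_{\text{off}}^{\text{H}}}{D},
\end{equation*}
where $w_{\text{off}}^{\text{H}} := \sum_{i=1}^{N} \chi_i u_{\ell_i}^{i} \in V_{\text{off}}^{\text{H}}$ is the explicit candidate built in Lemma \ref{lem:4.6}. This is the natural admissible competitor since it is assembled out of the local POD truncations via the partition of unity.

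Next I would introduce the two intermediate interpolants $u_h \in V_h$ (the fine-scale Galerkin solution) and $w_{\text{snap}} := \sum_{i=1}^{N}\chi_i u^{i}_{\text{snap}} \in V_{\text{snap}}$, and apply the triangle inequality twice:
\begin{equation*}
\seminormE{u-w_{\text{off}}^{\text{H}}}{D} \;\leq\; \seminormE{u-u_h}{D} + \seminormE{u_h - w_{\text{snap}}}{D} + \seminormE{w_{\text{snap}} - w_{\text{off}}^{\text{H}}}{D}.
\end{equation*}
The first term is controlled by the standard fine-scale quasi-optimality \eqref{eq:fineApriori}, which produces precisely the $\min_{v_h \in V_h}\seminormE{u-v_h}{D}$ contribution in the statement. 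The second term is exactly the quantity bounded in the lemma immediately preceding Proposition \ref{prop:FinalSnap}, yielding the $\sqrt{2C_{\text{ov}}}H \max_i\{C_0 H\Cpoin{\omega_i} + \sqrt{\Cpoin{\omega_i}}\}\normLi{f}{D}$ term. The third term is exactly the content of Lemma \ref{lem:4.6}, yielding the $\sqrt{20C_{\text{ov}}}\max_i\{(H^2 \lambda_{\ell_i+1}^{\hi})^{-1/2}\}C_1 \normLi{f}{D}$ term.

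Since all three ingredients are already proved, there is no real obstacle: the proof is essentially a one-line concatenation. If anything, the only thing to be careful about is that the candidate $w_{\text{off}}^{\text{H}}$ indeed lies in $V_{\text{off}}^{\text{H}}$ (which is clear from its construction as $\chi_i$ times the local POD basis coefficients) so that Cea's lemma applies cleanly, and that the hypothesis $f \in L^2_{\kappa^{-1}}(D)$ is enough to trigger both the snapshot-interpolation bound and Lemma \ref{lem:4.6}, which is indeed the case. Adding the three estimates and using $\seminormE{u-u_h}{D}\leq \min_{v_h\in V_h}\seminormE{u-v_h}{D}$ closes the argument.
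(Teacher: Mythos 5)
Your proposal is correct and follows essentially the same route as the paper: the paper's own proof is exactly the concatenation of Galerkin orthogonality (energy-norm best approximation), the triangle inequality through $u_h$ and $w_{{\rm snap}}$, the fine-scale estimate \eqref{eq:fineApriori}, the snapshot interpolation bound preceding Proposition \ref{prop:FinalSnap}, and Lemma \ref{lem:4.6}. Your version merely spells out the intermediate competitor $w_{{\rm off}}^{{\rm H}}=\sum_{i}\chi_i u^{i}_{\ell_i}$ explicitly, which the paper leaves implicit.
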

\begin{proof}
This assertion follows from the Galerkin orthogonality property \cite[Corollary 2.5.10]{MR2373954}, the
triangle inequality and the fine-scale {\em a priori} estimate \eqref{eq:fineApriori}, Proposition
\ref{prop:FinalSnap} and Lemma \ref{lem:4.6}.
\end{proof}
\begin{remark}
Since the discrete eigenvalue problem \eqref{offeig} is generated from the continuous eigenvalue problem \eqref{eq:spectral} with finite ensembles $\{\phi_j^{\hi}\}_{j=1}^{L_i}$, a scaling argument shows
\begin{align*}
H^{2}\lambda_{n}^{\hi}\to \infty \quad\text{ as } n\to \infty \text{ and } h\to 0.
\end{align*}
This and \eqref{eq:podErr} imply the convergence of the POD solution $u_{\rm off}^{\rm H}$ in the energy norm.
\end{remark}

\section{Concluding remarks}\label{sec:conclusion}
In this paper, we have analyzed three types of multiscale methods in the framework of the generalized multiscale finite
element methods (GMsFEMs) for elliptic problems with heterogeneous high-contrast coefficients. Their convergence rates
 in the energy norm are derived under a very mild assumption on the source term, and are given in terms
of the eigenvalues and coarse grid mesh size. It is worth pointing out that the analysis does not rely on any oversampling
technique that is typically adopted in existing studies. The analysis indicates that the eigenvalue decay behavior of
eigenvalue problems with high-contrast heterogeneous coefficients is crucial for the convergence behavior
of the multiscale methods, including the GMsFEM. This motivates further investigations on such eigenvalue problems in order
to gain a better mathematical understanding of these methods. Some partial findings along this line have been presented
in the work \cite{li2017low}, however, much more work remains to be done.

\section*{Acknowledgements}
The work was partially supported by the Hausdorff Center for Mathematics, University of Bonn, Germany. The author acknowledges the
support from the Royal Society through a Newton international fellowship, and thanks Eric Chung (Chinese University of Hong Kong), Juan Galvis (Universidad Nacional de Colombia, Colombia), Michael Griebel (University of Bonn, Germany) and Daniel Peterseim (University of Augsburg, Germany) for fruitful discussions on the topic of the paper.

\appendix
\section{Very-weak solutions to boundary-value problems with high-contrast heterogeneous coefficients}
In this appendix, we derive a weighted $L^2$ estimate for boundary value problems with
high-contrast heterogeneous coefficients, which plays a crucial role in the error analysis.
Let Assumption \ref{ass:coeff} hold and let $\omega_i$ be a coarse neighborhood for any
$i=1,\cdots,N$. For any $g\in L^2(\partial\omega_i)$, we define the following elliptic problem
\begin{equation}\label{eq:pde-very}
\left\{\begin{aligned}
-\nabla\cdot(\kappa\nabla v)&=0 && \text{ in } \omega_i,\\
v&=g &&\text{ on }\partial \omega_i.
\end{aligned}\right.
\end{equation}
Our goal is to derive an weighted $L^2$ estimate of the solution $v$, which is independent of the high-contrast
in the coefficient $\kappa$. To this end, we employ a nonstandard variational form in the spirit
of the transposition method \cite{MR0350177}, and seek $v\in L^2(\omega_i)$ such that
\begin{align}\label{eq:nonstd-variational}
-\int_{\omega_i}v\nabla\cdot(\kappa\nabla z)\dx=-\int_{\partial \omega_i}g\kappa\frac{\partial z}{\partial n}\mathrm{d}s \quad\text{ for all }z\in X(\omega_i).
\end{align}
Here, $X(\omega_i)$ denotes the test space to be defined below. The main difficulty for our setting of piecewise
high-contrast coefficient is that the solution has only piecewise $H^2$ regularity, and thus, we cannot directly
apply the nonstandard variational form described above. The difficulty is overcome in Theorem \ref{thm:pw-Regularity}.

\begin{theorem}\label{lem:very-weak}
Assume that $\{\eta_j\}_{j=1}^{m}$ are of comparable magnitude and that $\etamaxmin{min}$ is sufficiently large. Let $g\in L^2(\omega_i)$ and let $v$ be the solution to \eqref{eq:pde-very}. Then there exists a constant ${\rm C}_{{\rm weak}}$ independent of the coefficient $\kappa$ such that
\[
\normLT{v}{\omega_i}\leq {\rm C}_{{\rm weak}}\|g\|_{L^2(\partial \omega_i)}.
\]
\end{theorem}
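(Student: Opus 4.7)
The plan is to employ the transposition (duality) method. Since $g\in L^2(\partial\omega_i)$ is too rough to admit an $H^1$ solution, I treat $v$ as the unique very-weak solution characterized by \eqref{eq:nonstd-variational}: formally testing $-\nabla\cdot(\kappa\nabla v)=0$ against any $z$ vanishing on $\partial\omega_i$ and integrating by parts twice yields that identity. The test space is taken as
\[
X(\omega_i):=\Big\{z\in H^1_{\kappa,0}(\omega_i):\ \nabla\cdot(\kappa\nabla z)\in L^2(\omega_i),\ \ \kappa\tfrac{\partial z}{\partial n}\in L^2(\partial\omega_i)\Big\},
\]
which is large enough to accommodate the dual solutions constructed below. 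A preliminary step is to verify existence and uniqueness of $v\in L^2(\omega_i)$ satisfying \eqref{eq:nonstd-variational} against every $z\in X(\omega_i)$; this is standard once the test space is well-posed.

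Next, for each $\phi\in L^2_{\widetilde{\kappa}}(\omega_i)$, introduce the auxiliary dual problem
\[
-\nabla\cdot(\kappa\nabla z)=\widetilde{\kappa}\,\phi\quad\text{in }\omega_i,\qquad z=0\quad\text{on }\partial\omega_i,
\]
which is uniquely solvable in $H^1_{\kappa,0}(\omega_i)$ by Lax--Milgram, since $\widetilde{\kappa}\phi\in L^2(\omega_i)$ in view of the $L^\infty$-bound \eqref{eq:upper_tilde} on $\widetilde{\kappa}$. Inserting this $z$ into \eqref{eq:nonstd-variational} yields the duality identity
\[
\int_{\omega_i}\widetilde{\kappa}\,v\,\phi\,\dx=-\int_{\partial\omega_i} g\,\kappa\,\tfrac{\partial z}{\partial n}\,\mathrm{d}s.
\]
Setting $\phi=v$ and applying Cauchy--Schwarz on the boundary integral produce
\[
\normLT{v}{\omega_i}^2\leq \|g\|_{L^2(\partial\omega_i)}\,\Big\|\kappa\,\tfrac{\partial z}{\partial n}\Big\|_{L^2(\partial\omega_i)}.
\]

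The crux of the proof is therefore to establish a trace bound for the dual solution of the form
\[
\Big\|\kappa\,\tfrac{\partial z}{\partial n}\Big\|_{L^2(\partial\omega_i)}\leq C\,\normLT{v}{\omega_i},
\]
with $C$ independent of the contrast of $\kappa$. This is the genuinely delicate part, because $\kappa$ jumps across each interface $\Gamma_j=\partial D_j$, so $z$ admits only piecewise $H^2$-regularity rather than global $H^2$-regularity. The plan is to invoke the piecewise regularity result Theorem \ref{thm:pw-Regularity}, which under the hypotheses that $\{\eta_j\}$ are of comparable magnitude and $\etamaxmin{min}$ is sufficiently large delivers a contrast-uniform bound on $\|z\|_{H^2(D_0\cap\omega_i)}+\sum_{j=1}^m\|z\|_{H^2(D_j\cap\omega_i)}$ in terms of $\|\widetilde{\kappa}\,v\|_{L^2(\omega_i)}$. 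A piecewise application of the $L^2$ trace theorem on each smooth piece, combined with the fact that $\kappa$ is constant on each piece, controls $\|\kappa\,\partial z/\partial n\|_{L^2(\partial\omega_i)}$ by this piecewise $H^2$ norm; the $L^\infty$-bound \eqref{eq:upper_tilde} then converts $\|\widetilde{\kappa}\,v\|_{L^2(\omega_i)}$ into $\normLT{v}{\omega_i}$ up to a contrast-independent constant, closing the chain.

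The main obstacle is thus Theorem \ref{thm:pw-Regularity} itself: obtaining a transmission-problem $H^2$-estimate whose constants are uniform in the coefficient contrast. One would localize near each $\Gamma_j$ and near $\partial\omega_i$, flatten them using their $C^{1,\alpha}$-regularity, and derive jump-condition estimates in which the hypothesis $\etamaxmin{min}\gg 1$ is precisely what is needed to absorb the leading bad terms generated by the high contrast, while the comparability of $\{\eta_j\}$ prevents any single inclusion from dominating. Granted this regularity input, the duality portion of the argument above is routine.
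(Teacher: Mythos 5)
Your proposal follows the same transposition/duality route as the paper: the nonstandard variational form \eqref{eq:nonstd-variational}, the zero-Dirichlet dual problem whose right-hand side is the datum $w=\widetilde{\kappa}v\in \mathcal{R}(T)$ measured in $L^2_{\widetilde{\kappa}^{-1}}(\omega_i)$, and the contrast-independent conormal trace estimate of Theorem \ref{thm:pw-Regularity} as the essential input. The only cosmetic difference is that you close by testing with $\phi=v$ and dividing through by $\normLT{v}{\omega_i}$, whereas the paper packages the identical step as recognizing the boundary functional $b$ as a bounded element of $(L^2_{\widetilde{\kappa}^{-1}}(\omega_i))^*=L^2_{\widetilde{\kappa}}(\omega_i)$ (Remark \ref{rem:dual}); the two are equivalent.
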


To prove it, we need a regularity result based on \cite{chu2010new,li2017low}.
\begin{theorem}\label{thm:pw-Regularity}
Assume that $\{\eta_j\}_{j=1}^{m}$ are of comparable magnitude and that $\etamaxmin{min}$ is sufficiently large. Let $w\in L^2_{\widetilde{\kappa}^{-1}}(\omega_i)$ and let $z\in H^1_{0}(\omega_i)$ be the unique solution to the following weak formulation
\begin{align}\label{eq:aux-z}
\forall q\in H^1_{0}(\omega_i): \int_{\omega_i}\kappa\nabla z\cdot\nabla q\;\dx=\int_{\omega_i}wq\;\dx.
\end{align}
Then for some constant ${\rm C}_{{\rm weak}} $ independent of the contrast, there holds
\begin{align*}
\|\eta_j\frac{\partial z}{\partial n}\|_{L^2(\partial\omega_i\cap D_j)}
&\leq\Cw \normLii{w}{\omega_i}\quad\text{ for all } j=0,1,\cdots,m.
\end{align*}
\end{theorem}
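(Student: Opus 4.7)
The plan is to derive the contrast-uniform estimate on the weighted normal flux $\eta_j \partial_n z$ piece-by-piece: use the basic energy identity plus standard $H^2$ regularity to handle the matrix piece $D_0$, and invoke the piecewise $C^{1,\alpha}$ regularity theory of \cite{chu2010new,li2017low} to handle the high-contrast inclusion pieces $D_j$ with $j\geq 1$.

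First I would obtain a contrast-independent energy bound by testing \eqref{eq:aux-z} with $q=z$ and applying H\"older's inequality,
\begin{equation*}
\int_{\omega_i}\kappa|\nabla z|^2\dx \leq \normLii{w}{\omega_i}\normLT{z}{\omega_i}.
\end{equation*}
Since $\widetilde\kappa$ is bounded from above by a constant multiple of $\kappa$ (cf.\ \eqref{eq:upper_tilde}), the weighted Friedrichs inequality of Theorem \ref{thm:friedrichs} applied to $z\in H^1_0(\omega_i)$ yields $\seminormE{z}{\omega_i}\leq C\normLii{w}{\omega_i}$, with $C$ independent of the contrast.

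For $j=0$, the coefficient satisfies $\kappa\equiv 1$ on $D_0$, so $z$ solves the Poisson equation $-\Delta z = w$ on $D_0\cap\omega_i$ with zero Dirichlet data on $\partial\omega_i\cap\partial D_0$. Classical $H^2$ regularity up to the smooth portion of $\partial\omega_i$ (whose $C^{1,\alpha}$ regularity is inherited from Assumption \ref{ass:coeff}), combined with a standard trace inequality, yields
\begin{equation*}
\|\partial_n z\|_{L^2(\partial\omega_i\cap D_0)} \lesssim \|w\|_{L^2(D_0\cap\omega_i)}+\seminormE{z}{\omega_i}\leq \Cw\normLii{w}{\omega_i}.
\end{equation*}
For $j\geq 1$, the task is more delicate: inside $D_j$ one has $-\Delta z = w/\eta_j$, and the energy bound alone only produces $\|\nabla z\|_{L^2(D_j)}=O(\eta_j^{-1/2})$, whereas we need the sharper scaling $\|\eta_j\nabla z\|_{L^\infty(D_j)}=O(1)$. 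This sharper scaling is exactly what \cite{chu2010new,li2017low} establish: under Assumption \ref{ass:coeff} together with the comparable-$\eta_j$ and large-$\eta_{\min}$ hypotheses, the transmission problem satisfied by $z$ (with flux continuity $[\kappa\partial_n z]=0$ across each $C^{1,\alpha}$ interface $\Gamma_j$) admits piecewise $C^{1,\alpha}$ bounds that are uniform in the contrast. Applying that result up to the portion of $\partial\omega_i$ lying in $D_j$ and controlling the right-hand side through the energy estimate of Step 1 gives $\|\eta_j \nabla z\|_{L^\infty(D_j\cap\omega_i)}\leq \Cw\normLii{w}{\omega_i}$; a trivial $L^\infty\!\to\!L^2$ trace bound on the smooth piece $\partial\omega_i\cap D_j$ finishes the estimate.

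The main obstacle is the uniform piecewise $C^{1,\alpha}$ bound invoked for $j\geq 1$, because naive elliptic regularity on the rescaled equation $-\Delta z = w/\eta_j$ would lose a factor of $\sqrt{\eta_j}$. Heuristically $z|_{D_j}$ admits an asymptotic expansion $z|_{D_j}=c_j+\eta_j^{-1}\widetilde z_j+O(\eta_j^{-2})$ as $\eta_{\min}\to\infty$, in which the leading constant $c_j$ contributes nothing to $\nabla z$ and the correction $\widetilde z_j$ solves a reduced problem of bounded regularity; a blow-up/rescaling argument at interface points, carried out in \cite{chu2010new,li2017low}, converts this formal scaling into the rigorous contrast-uniform estimate. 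I would cite those references for this ingredient rather than reproduce the underlying analysis.
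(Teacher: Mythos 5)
Your overall skeleton (an energy estimate, then a piecewise treatment of the matrix part $D_0$ and the inclusions $D_j$, $j\geq 1$) is the same as the paper's, but the key step for $j\geq 1$ has a genuine gap. You propose to close the argument with a contrast-uniform piecewise $C^{1,\alpha}$ bound $\|\eta_j\nabla z\|_{L^\infty(D_j\cap\omega_i)}\lesssim\normLii{w}{\omega_i}$ cited from \cite{chu2010new,li2017low}. No such bound is available here: the datum is only $w\in L^2$, and even for $-\Delta z=w$ with $w\in L^2$ one does not get $\nabla z\in L^\infty$ (in 2D, $z\in H^2\hookrightarrow W^{1,p}$ for all $p<\infty$ but not $W^{1,\infty}$). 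The Li--Vogelius/Li--Nirenberg piecewise gradient theory \cite{li2000gradient} that gives uniform $C^{1,\alpha}$ bounds is used in the paper only for the $\kappa$-harmonic functions $\chi_i$ (zero right-hand side); it does not apply to \eqref{eq:aux-z}. What \cite{chu2010new,li2017low} actually supply, and what the paper uses, are Sobolev-scale estimates: the refined piecewise $H^1$ bound $\normHp{z}{\omega_i\cap D_j}{1}\lesssim \etamaxmin{min}^{-1}\|w\|_{L^2(\omega_i)}$ for $j\geq1$ (a \emph{full} inverse power of the contrast, not the half power that your Step 1 energy estimate yields --- this improvement, from \cite[Prop.\ 6.7]{li2017low}, is the quantitative heart of the theorem and is missing from your proposal), together with the $H^2$ seminorm bound $|z|_{H^2(\omega_i\cap D_j)}\lesssim\etamaxmin{min}^{-1}\|w\|_{L^2(\omega_i)}$ from \cite[Thm.\ B.1]{chu2010new}. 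The paper then interpolates to $H^{3/2}(\omega_i\cap D_j)$, converts $\|w\|_{L^2(\omega_i)}\lesssim\etamaxmin{min}^{1/2}\normLii{w}{\omega_i}$ (this is where the hypothesis that the $\eta_j$ are comparable and $\etamaxmin{min}$ is large enters), and takes the trace of $\partial z/\partial n$. Without the refined $H^1$ bound, neither your $L^\infty$ route nor the Sobolev route closes.

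A secondary inaccuracy: for $j=0$ you justify $H^2$ regularity up to $\partial\omega_i$ by claiming its $C^{1,\alpha}$ smoothness is ``inherited from Assumption \ref{ass:coeff}''. That assumption concerns $\partial D$ and the interfaces $\Gamma_j$ only; $\partial\omega_i$ is the boundary of a union of coarse elements, hence merely Lipschitz/polygonal with corners. This is precisely why the paper works at the $H^{3/2}$ level (obtained by interpolation) rather than asserting $H^2$ regularity up to $\partial\omega_i$.
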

\begin{proof}
The triangle inequality, Poincar\'{e} inequality, and \cite[Eqn. (6.2) and Proposition 6.7]{li2017low} imply
\begin{equation}\label{eq:H1-estimate}
\begin{aligned}
\normHp{z}{\omega_i\cap D_0}{1}&\lesssim \Cpoin{\omega_i\cap D_0}\|w\|_{L^2(\omega_i)},\\
\normHp{z}{\omega_i\cap D_j}{1}&\lesssim \etamaxmin{min}^{-1}\Cpoin{\omega_i\cap D_0}\|w\|_{L^2(\omega_i)},\quad\text{ for } j=1,2,\cdots,m.
\end{aligned}
\end{equation}
Note that the $H^2$ seminorm regularity result in \cite[Theorem B.1]{chu2010new} does not depend on the distance between $\partial \omega_i$ and $D_j$ for any $j=1,\cdots,m$. Therefore, it can be extended to our situation directly:
\begin{align*}
|{z}|_{H^2(\omega_i\cap D_j)}&\lesssim \etamaxmin{min}^{-1}\|w\|_{L^2(\omega_i)} \quad\text{ for } j=0,1,\cdots,m .
\end{align*}
Combining the preceding two estimates and applying interpolation between $H^1(\omega_i)$ and $H^2(\omega_i)$ yield the $H^{3/2}(\omega_i)$ regularity estimate
\begin{align}\label{eq:h3/2}
\normHp{z}{\omega_i\cap D_j}{3/2}\lesssim \etamaxmin{min}^{-1}\|w\|_{L^2(\omega_i)}.
\end{align}
Furthermore, since $w\in L^2_{\widetilde{\kappa}^{-1}}(\omega_i)\subset L^2(\omega_i)$, by definition, we can obtain
\begin{align*}
\normL{w}{\omega_i}^2&=\int_{\omega_i}w^2\dx=\sum\limits_{j=0}^{m}\int_{\omega_i\cap D_j}
w^2\dx\nonumber\\
&\leq\sum\limits_{j=0}^{m}\eta_{j}\normLii{w}{\omega_i\cap D_j}^2
\lesssim \etamaxmin{min}\normLii{w}{\omega_i}^2.
\end{align*}
This, together with \eqref{eq:h3/2}, proves
\begin{align}\label{eq:h3/2ii}
\normHp{z}{\omega_i\cap D_j}{3/2}\lesssim \etamaxmin{min}^{-1/2}\normLii{w}{\omega_i}.
\end{align}
Since differentiation is continuous from $H^{3/2}(\omega_i)$ to $H^{1/2}(\omega_i)$, by the trace theorem, we have
\begin{align*}
\|\frac{\partial z}{\partial n}\|_{L^2(\partial\omega_i\cap D_j)}&\lesssim \|\frac{\partial z}{\partial n}\|_{H^{1/2}(\omega_i\cap D_j)}\lesssim \normHp{z}{\omega_i\cap D_j}{3/2},
\end{align*}
which, together with \eqref{eq:h3/2ii}, proves the desired assertion.
\end{proof}

Next we define a Lions-type variational formulation for Problem \eqref{eq:pde-very} when $\etamaxmin{min}$ is large
\cite[Section 6, Chapter 2]{MR0350177}.
To this end, let the test space $X(\omega_i)\subset H^1_{\kappa,0}(\omega_i)$ be defined by
\begin{align}\label{eq:test-space}
X(\omega_i):=\{z:-\nabla\cdot(\kappa\nabla z)\in L^2(\omega_i)\text{ and } z\in H^1_{\kappa,0}(\omega_i)\}.
\end{align}
This test space $X(\omega_i)$ is endowed with the norm $\|\cdot\|_{X(\omega_i)}$:
\[
\forall z\in X(\omega_i):\|z\|_{X(\omega_i)}^2=\int_{\omega_i}\kappa|\nabla z|^2\dx+\|\nabla\cdot(\kappa\nabla z)\|_{L^2(\omega_i)}^2.
\]
Below, we denote by $n_{i}(x)$ the unit outward normal (relative to $D_i$) to the interface
$\Gamma_i$ at the point $x\in \Gamma_i$. For a function $w$ defined on $\mathbb{R}^2\backslash
\Gamma_i$ for $i=1,2,\cdots,m$, we define for any $x\in \Gamma_i$,
\[
w(x)|_{\pm}:=\lim_{t\to 0^{+}} w(x\pm tn_{i}(x))\quad \text{ and }\quad
\frac{\partial}{\partial n_{i}^{\pm}}w(x):=\lim_{t\to 0^{+}}(\nabla w(x\pm tn_{i}(x))\cdot n_{i}(x))
\]
if the limit on the right hand side exists.
\begin{lemma}
Let $v$ be the solution to problem \eqref{eq:pde-very} and let the test space $X(\omega_i)$ be defined in \eqref{eq:test-space}. Then the nonstandard variational form \eqref{eq:nonstd-variational} is well posed.
\end{lemma}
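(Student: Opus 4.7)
The plan is to establish well-posedness by the transposition method of Lions--Magenes, seeking $v$ in the weighted space $L^2_{\widetilde{\kappa}}(\omega_i)$, which we identify with the dual of $L^2_{\widetilde{\kappa}^{-1}}(\omega_i)$ under the standard $L^2$-pairing following Remark \ref{rem:dual}. The strategy proceeds in three stages: view $\mathcal{A}z:=-\nabla\cdot(\kappa\nabla z)$ as an isomorphism from $X(\omega_i)$ onto $L^2_{\widetilde{\kappa}^{-1}}(\omega_i)$; verify that the right-hand side functional $F(z):=-\int_{\partial\omega_i} g\kappa(\partial z/\partial n)\,\mathrm{d}s$ is continuous relative to $\|\mathcal{A}z\|_{L^2_{\widetilde{\kappa}^{-1}}(\omega_i)}$; then apply Riesz representation to produce the unique $v$.

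First I would verify that $\mathcal{A}:X(\omega_i)\to L^2_{\widetilde{\kappa}^{-1}}(\omega_i)$ is a bijection with continuous inverse. For any $w\in L^2_{\widetilde{\kappa}^{-1}}(\omega_i)$, Lax--Milgram applied to the auxiliary weak problem \eqref{eq:aux-z} yields a unique $z\in H^1_{\kappa,0}(\omega_i)$ whose equation $-\nabla\cdot(\kappa\nabla z)=w$ automatically places $z$ in $X(\omega_i)$ by the definition \eqref{eq:test-space}. Injectivity is immediate from uniqueness, and the open mapping theorem produces a two-sided estimate $\|z\|_{X(\omega_i)}\asymp\|\mathcal{A}z\|_{L^2_{\widetilde{\kappa}^{-1}}(\omega_i)}$ that underlies the duality argument in the final step.

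The second step is the crux and the anticipated main obstacle. I must show $|F(z)|\lesssim \Cw\|g\|_{L^2(\partial\omega_i)}\|\mathcal{A}z\|_{L^2_{\widetilde{\kappa}^{-1}}(\omega_i)}$ with a constant independent of the contrast. The difficulty is that on all of $\omega_i$ the solution $z$ has only piecewise regularity because $\kappa$ jumps across the interfaces $\Gamma_j$, so the conormal trace $\kappa\partial z/\partial n$ cannot be controlled through any single global application of the trace theorem. The remedy is to split
\begin{equation*}
F(z)=-\sum_{j=0}^{m}\int_{\partial\omega_i\cap D_j} g\,\eta_j\frac{\partial z}{\partial n}\,\mathrm{d}s
\end{equation*}
(with $\eta_0\equiv 1$ on $D_0$), apply the Cauchy--Schwarz inequality on each piece, and then invoke Theorem \ref{thm:pw-Regularity} with $w=\mathcal{A}z$ to bound every $\|\eta_j(\partial z/\partial n)\|_{L^2(\partial\omega_i\cap D_j)}$ by $\Cw\|\mathcal{A}z\|_{L^2_{\widetilde{\kappa}^{-1}}(\omega_i)}$ uniformly in the contrast. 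Summing the $m+1$ contributions yields the desired continuity.

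Finally, composing $F$ with $\mathcal{A}^{-1}$ turns it into a bounded linear functional on $L^2_{\widetilde{\kappa}^{-1}}(\omega_i)$. The Riesz representation theorem, combined with the duality identification of Remark \ref{rem:dual}, then furnishes a unique $v\in L^2_{\widetilde{\kappa}}(\omega_i)$ such that $\int_{\omega_i} v\,\mathcal{A}z\,\mathrm{d}x=F(z)$ for every $z\in X(\omega_i)$, which is precisely \eqref{eq:nonstd-variational}. The quantitative estimate $\|v\|_{L^2_{\widetilde{\kappa}}(\omega_i)}\leq\Cw\|g\|_{L^2(\partial\omega_i)}$ of Theorem \ref{lem:very-weak} emerges as the operator norm of $F$, so well-posedness and the contrast-independent bound come out of the same Riesz argument.
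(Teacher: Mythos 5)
Your duality argument is essentially the paper's own: the functional you call $F\circ\mathcal{A}^{-1}$ is exactly the linear form $b(\cdot)$ the paper introduces on $L^2_{\widetilde{\kappa}^{-1}}(\omega_i)$, the contrast-independent bound \eqref{eq:b} comes from Theorem \ref{thm:pw-Regularity} applied piecewise on $\partial\omega_i\cap D_j$ just as you describe, and the conclusion is drawn from the identification $(L^2_{\widetilde{\kappa}^{-1}}(\omega_i))^*=L^2_{\widetilde{\kappa}}(\omega_i)$ of Remark \ref{rem:dual}. So the well-posedness half of your proposal is sound and matches the paper.

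However, you have omitted the other half of what the lemma (as used downstream) requires: showing that the solution $v$ of \eqref{eq:pde-very} actually \emph{satisfies} the identity \eqref{eq:nonstd-variational}. Your Riesz argument produces some unique $\tilde v\in L^2_{\widetilde{\kappa}}(\omega_i)$ solving the variational problem, but nothing in your proposal identifies $\tilde v$ with the $v$ of \eqref{eq:pde-very}; without that, the bound $\normLT{v}{\omega_i}\leq \Cw\|g\|_{L^2(\partial\omega_i)}$ of Theorem \ref{lem:very-weak} cannot be transferred to the boundary-value problem. This consistency step is not free here: because $\kappa$ jumps across the interfaces $\Gamma_j$, one must integrate by parts twice subdomain-by-subdomain on $\omega_i\cap D_j$ and check that the resulting interface integrals cancel, using the continuity of the conormal fluxes of both $v$ and of the test function $z$ (cf.\ \eqref{eq:flux}); this is precisely the first half of the paper's proof. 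A second, more minor point: $\mathcal{A}$ maps $X(\omega_i)$ onto $L^2(\omega_i)$ by the definition \eqref{eq:test-space}, not onto $L^2_{\widetilde{\kappa}^{-1}}(\omega_i)$, which is in general a proper subspace since $\widetilde{\kappa}$ may vanish; the duality argument should therefore be phrased, as in the paper, by testing only against $w\in L^2_{\widetilde{\kappa}^{-1}}(\omega_i)$ and solving the auxiliary problem \eqref{eq:aux-z} for each such $w$, rather than by asserting a global isomorphism $\mathcal{A}:X(\omega_i)\to L^2_{\widetilde{\kappa}^{-1}}(\omega_i)$ plus the open mapping theorem.
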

\begin{proof}
For all $z\in X(\omega_i)$, let $w:=-\nabla\cdot(\kappa\nabla z)$, then by definition, $w\in L^2(\omega_i)$.
Recall the continuity of the flux implied by the definition, i.e.,
\begin{align}\label{eq:flux}
\forall z\in X(\omega_i):\quad\eta_j\frac{\partial z}{\partial n_j^{-}}=\frac{\partial z}{\partial n_j^{+}}\quad\text{ for all } j=1,\cdots,m.
\end{align}
For all $z\in X(\omega_i)$, we obtain
\begin{align*}
\int_{\omega_i}-\nabla\cdot(\kappa\nabla v)z\;\dx&=\sum_{j=0}^{m}\int_{\omega_i\cap D_j}-\nabla\cdot(\kappa\nabla v)z\;\dx
=\sum_{j=0}^{m}\int_{\omega_i\cap D_j}\Big(-\nabla\cdot(\kappa\nabla v z)+\kappa\nabla z\cdot\nabla v\Big)\;\dx\\
&=\Big(\int_{\partial D_0\backslash\partial\omega_i}\kappa\frac{\partial v}{\partial n_{j}^{+}} z\mathrm{d}s-\sum_{j=1}^{m}\int_{\partial D_j\backslash\partial\omega_i}\kappa\frac{\partial v}{\partial n_{j}^{-}} z\mathrm{d}s\Big)+\sum_{j=0}^{m}\int_{\omega_i\cap D_j}\kappa\nabla z\cdot\nabla v\;\dx.
\end{align*}
The continuity of the flux for $v$ shows that the sum of the first two terms vanishes. We apply the divergence
theorem again, together with the continuity of flux for $z$, and derive
\begin{align*}
\int_{\omega_i}-\nabla\cdot(\kappa\nabla v)z\;\dx&=
\sum_{j=0}^{m}\int_{\omega_i\cap D_j}\kappa\nabla z\cdot\nabla v\;\dx
=\sum_{j=0}^{m}\int_{\omega_i\cap D_j}\nabla\cdot(\kappa\nabla z v)-\nabla\cdot(\kappa\nabla z)v\;\dx\\
&=\Big(-\int_{\partial D_0\backslash\partial\omega_i}\kappa\frac{\partial z}{\partial n_{j}^{+}} v\mathrm{d}s+\sum_{j=1}^{m}\int_{\partial D_j\backslash\partial\omega_i}\kappa\frac{\partial z}{\partial n_{j}^{-}} v\mathrm{d}s\Big)
+\int_{\partial\omega_i}\kappa\frac{\partial z}{\partial n} g\mathrm{d}s\\
&-\int_{\omega_i}\nabla\cdot(\kappa\nabla z)v\;\dx.
\end{align*}
The continuity of flux \eqref{eq:flux} indicates that the first term vanishes, and this proves \eqref{eq:nonstd-variational}.

To prove the well-posedness of the nonstandard variational form \eqref{eq:nonstd-variational}, we introduce a bilinear form
$c(\cdot,\cdot)$ on $L^2_{\widetilde{\kappa}}(\omega_i)\times L^2_{\widetilde{\kappa}^{-1}}(\omega_i)$ and a linear form $b(\cdot)$ on $L^2_{\widetilde{\kappa}^{-1}}(\omega_i)$, defined by
\begin{align*}
c(w_1,w_2)&:=\int_{\omega_i}w_1 w_2\;\dx\quad\text{ for all } w_1\in L^2_{\widetilde{\kappa}}(\omega_i)\text{ and }w_2\in L^2_{\widetilde{\kappa}^{-1}}(\omega_i),\\
b(w)&:=\int_{\partial\omega_i}\kappa\frac{\partial z}{\partial n} g\;\mathrm{d}s\quad
\text{ for all }w\in L^2_{\widetilde{\kappa}^{-1}}(\omega_i),
\end{align*}
with $z$ being the unique solution to \eqref{eq:aux-z}.
It follows from Theorem \ref{thm:pw-Regularity} that
\begin{align}\label{eq:b}
\|b\|:=\sup_{w\in L^2_{\widetilde{\kappa}^{-1}}(\omega_i)}\frac{b(w)}{\|w\|_{L^2_{\widetilde{\kappa}^{-1}}(\omega_i)}}\leq \text{C}_{\text{weak}}\|g\|_{L^2(\partial\omega_i)}.
\end{align}
This implies that $b$ lies in the dual space of $L^2_{\widetilde{\kappa}^{-1}}(\omega_i)$.
Since the dual space of $L_{\widetilde{\kappa}^{-1}}^2(\omega_i)$
is $L^2_{\widetilde{\kappa}}(\omega_i)$, cf. Remark \ref{rem:dual}, this
yields well-posedness of the following variational problem: find $v\in L^2_{\widetilde{\kappa}}(\omega_i)$ such that
\begin{align}\label{eq:variation2}
c(v,w)=b(w)\quad\text{ for all }w\in L^2_{\widetilde{\kappa}^{-1}}(\omega_i).
\end{align}
The equivalence of problems \eqref{eq:variation2} and \eqref{eq:nonstd-variational} implies the desired well-posedness of \eqref{eq:nonstd-variational}.
\end{proof}
Finally, we are ready to prove Theorem \ref{lem:very-weak}.
\begin{proof}[Proof of Theorem \ref{lem:very-weak}]
For all $w\in L^2_{\widetilde{\kappa}^{-1}}(\omega_i)$, we obtain from \eqref{eq:b} and \eqref{eq:variation2}
\begin{align*}
\int_{\omega_i} v w\;\dx:=c(v,w)=b(w)\leq \text{C}_{\text{weak}}\normLii{w}{\omega_i}\|g\|_{\partial\omega_i}.
\end{align*}
Since $(L_{\widetilde{\kappa}^{-1}}^2(\omega_i))^*=L^2_{\widetilde{\kappa}}(\omega_i)$, cf. Remark \ref{rem:dual},
we get the desired assertion. This completes the proof.
\end{proof}
\bibliographystyle{abbrv}
\bibliography{reference}

\end{document}